\begin{document}

\newtheorem{rtheorem}{Теорема}[section]
\newtheorem{rcorollary}[rtheorem]{Следствие}
\newtheorem{rproposition}[rtheorem]{Предложение}
\newtheorem{etheorem}{Theorem}[section]
\newtheorem{ecorollary}[etheorem]{Corollary}
\newtheorem{eproposition}[etheorem]{Proposition}

\renewcommand{\theequation}{$\ast$}
\renewcommand{\thefootnote}{}

\title[On~the~separability of~subgroups of~nilpotent groups]{On~the~separability of~subgroups of~nilpotent groups by~root classes of~groups}

\author{E.~V.~Sokolov}
\address{Ivanovo State University, Russia}
\email{ev-sokolov@yandex.ru}

\begin{abstract}
Suppose that $\mathcal{C}$ is a~class of~groups consisting only of~periodic groups and~$\mathfrak{P}(\mathcal{C})^{\prime}$ is the~set of~prime numbers each of~which does not divide the~order of~any element of~a~$\mathcal{C}$\nobreakdash-group. It~is easy to~see that if a~subgroup~$Y$ of~a~group~$X$ is $\mathcal{C}$\nobreakdash-sepa\-ra\-ble in~this group, then it is $\mathfrak{P}(\mathcal{C})^{\prime}$\nobreakdash-iso\-lat\-ed in~$X$. Let us say that $X$ has the~property~$\mathcal{C}\mbox{-}\mathfrak{Sep}$ if all its $\mathfrak{P}(\mathcal{C})^{\prime}$\nobreakdash-iso\-lat\-ed subgroups are $\mathcal{C}$\nobreakdash-sepa\-ra\-ble. We find a~condition that is sufficient for~a~nilpotent group~$N$ to~have the~property~$\mathcal{C}\mbox{-}\mathfrak{Sep}$ provided $\mathcal{C}$ is a~root class. We also prove that if $N$ is tor\-sion-free, then the~indicated condition is necessary for~this group to~have~$\mathcal{C}\mbox{-}\mathfrak{Sep}$.
\end{abstract}

\keywords{Subgroup separability, residual finiteness, residual $p$\nobreakdash-finiteness, residual solvability, root-class residuality, nilpotent group, residually nilpotent group, generalized free product, tree product, HNN-extension, fundamental group of~a~graph of~groups}

\thanks{The~study was supported by~the~Russian Science Foundation grant No.~22-21-00166,\\ \url{https://rscf.ru/en/project/22-21-00166/}}

\maketitle\vspace{-25pt}

\section{Introduction}\label{se01}

Let us begin with~the~definitions of~the~concepts appearing in~the~title of~the~article. A~subgroup~$Y$ of~a~group~$X$ is said to~be \emph{separable in~$X$ by~a~class of~groups~$\mathcal{C}$} (\emph{$\mathcal{C}$\nobreakdash-sepa\-ra\-ble} for~brevity) if, for~any element $x \in X \setminus Y$, there exists a~homomorphism~$\sigma$ of~$X$ onto~a~group from~$\mathcal{C}$ such that $x\sigma \notin Y\sigma$~\cite{Malcev1958PIvPI}. If~the~trivial subgroup of~$X$ is $\mathcal{C}$\nobreakdash-sepa\-ra\-ble, then $X$ is called \emph{residually a~$\mathcal{C}$\nobreakdash-group}.

The~concept of~a~\emph{root class of~groups} has several equivalent definitions~\cite{Sokolov2015CA}. According to~one of~them, a~class of~groups~$\mathcal{C}$ is called root if it contains at~least one non-trivial group and~is closed under taking subgroups, extensions, and~Cartesian products of~the~form $\prod_{y \in Y}X_{y}$, where $X, Y \in \mathcal{C}$ and~$X_{y}$ is an~isomorphic copy of~$X$ for~each $y \in Y$. The~examples of~root classes are the~classes of~all finite groups, finite $p$\nobreakdash-groups (where $p$ is a~prime number), periodic $\mathfrak{P}$\nobreakdash-groups of~finite exponent (where $\mathfrak{P}$ is a~non-empty set of~primes), all solvable groups, and~all tor\-sion-free groups. It~is also easy to~show that the~intersection of~any number of~root classes is again a~root class.

The~notion of~a~root class was introduced in~\cite{Gruenberg1957PLMS} and~allows one to~prove many statements at~once using the~same reasoning. It~turns out to~be especially useful in~studying the~residual properties of~free constructions of~groups (see, e.g.,~\cite{Sokolov2015CA, Sokolov2017SMJ, Sokolov2021SMJ2, Sokolov2022CA, SokolovTumanova2016SMJ, SokolovTumanova2017MZ, SokolovTumanova2019AL, SokolovTumanova2020IVM, Tumanova2017SMJ, Tumanova2019SMJ}). If~$X$ is such a~construction and~$\mathcal{C}$ is a~root class of~groups, then the~$\mathcal{C}$\nobreakdash-sepa\-ra\-bil\-ity of~some subgroups of~$X$ is quite often one of~the~necessary and/or~sufficient conditions for~$X$ to~be residually a~$\mathcal{C}$\nobreakdash-group. The~examples of~assertions of~this type can be found, in~particular, in~\cite{Azarov2013SMJ1, Azarov2013MZ, Azarov2016SMJ, Berlai2016CA, Logan2018CA, Loginova1999SMJ, Sokolov2021SMJ2, Sokolov2022CA, SokolovTumanova2016SMJ, SokolovTumanova2020IVM}. These results become more constructive if the~description of~$\mathcal{C}$\nobreakdash-sepa\-ra\-ble subgroups is given for~the~groups from~which the~construction is~\mbox{composed}.

Most of~the~known facts on~the~separability of~subgroups concern the~property of~finite separability, i.e.,~separability by~the~class of~all finite groups. There is also a~series of~assertions on~the~separability by~the~class of~finite $\mathfrak{P}$\nobreakdash-groups, where $\mathfrak{P}$ is some (most often one-element) set of~primes (see~\cite{Bardakov2004SMJ, Bardakov2006AC, BerlaiFerov2019JA, BobrovskiiSokolov2010AC, Loginova1999SMJ, Moldavanskii2017BIvSU, Sokolov2014IJAC, Sokolov2017SMJ}). The~separability of~subgroups by~other classes of~groups is not actually studied.

In~this article, we consider the~separability of~subgroups of~nilpotent groups by~a~root class of~groups~$\mathcal{C}$ and~give examples of~using the~obtained results in~studying the~property of~being residually a~$\mathcal{C}$\nobreakdash-group (in~the~case of~free constructions of~groups). It~follows from~Proposition~\ref{pe403} below that if $\mathcal{C}$ contains at~least one non-periodic group and~is closed under taking quotient groups, then it also contains all nilpotent groups of~cardinality less than~$\aleph_{\infty}$. It~is clear that all subgroups of~such groups are automatically $\mathcal{C}$\nobreakdash-sepa\-ra\-ble. So in~what follows, we consider only the~case when the~class~$\mathcal{C}$ consists of~periodic groups.{\parfillskip=0pt\par}

Everywhere below, if $\mathcal{C}$ is an~arbitrary (not~necessarily root) class of~periodic groups, then we denote by~$\mathfrak{P}(\mathcal{C})$ the~set of~all the~prime numbers each of~which divides the~order of~an~element of~some $\mathcal{C}$\nobreakdash-group. It~turns out that $\mathfrak{P}(\mathcal{C})$ plays an~important role in~the~study of~$\mathcal{C}$\nobreakdash-sepa\-ra\-bil\-ity. To~clarify this relationship, we need some notation and~definitions.

Throughout the~paper, if $\mathfrak{P}$ is a~set of~primes, then $\mathfrak{P}^{\prime}$ denotes the~set of~all the~prime numbers that do not belong to~$\mathfrak{P}$. A~subgroup~$Y$ of~a~group~$X$ is called \emph{$\mathfrak{P}^{\prime}$\nobreakdash-iso\-lat\-ed} in~this group if, for~any $x \in X$, $q \in \mathfrak{P}^{\prime}$, it follows from~the~inclusion $x^{q} \in Y$ that $x \in Y$. If~the~trivial subgroup of~$X$ is $\mathfrak{P}^{\prime}$\nobreakdash-iso\-lat\-ed, then $X$ is said to~\emph{have no~$\mathfrak{P}^{\prime}$\nobreakdash-tor\-sion}. We note that if $\mathfrak{P}$ contains all primes, then every subgroup is $\mathfrak{P}^{\prime}$\nobreakdash-iso\-lat\-ed.

It is known (see Proposition~\ref{pe405} below) that if $\mathcal{C}$ is a~class of~groups consisting only of~periodic groups, then any $\mathcal{C}$\nobreakdash-sepa\-ra\-ble subgroup is $\mathfrak{P}(\mathcal{C})^{\prime}$\nobreakdash-iso\-lat\-ed. We say that a~group~$X$ \emph{has the~property~$\mathcal{C}\mbox{-}\mathfrak{Sep}$} if the~inverse statement also holds, i.e.,~all the~$\mathfrak{P}(\mathcal{C})^{\prime}$\nobreakdash-iso\-lat\-ed subgroups of~$X$ are $\mathcal{C}$\nobreakdash-sepa\-ra\-ble. In~this article, we study the~following question: what conditions are sufficient for~a~nilpotent group to~have the~property~$\mathcal{C}\mbox{-}\mathfrak{Sep}$ if $\mathcal{C}$ is a~root class of~groups consisting only of~periodic groups. The~result of~this research is the~notion of~a~$\mathcal{C}$\nobreakdash-bound\-ed nilpotent group, its definition is given in~the~next section.

\section{Main results}\label{se02}

Suppose that $\mathcal{C}$ is a~class of~groups consisting only of~periodic groups and~$A$ is an~abelian group. If~$p \in \mathfrak{P}(\mathcal{C})$, then we refer to~the~$p$\nobreakdash-power torsion subgroup of~$A$ as~the~\emph{primary $\mathfrak{P}(\mathcal{C})$\nobreakdash-com\-po\-nent} of~this group. Let us say that~$A$

--\hspace{1ex}is \emph{weakly $\mathcal{C}$\nobreakdash-bound\-ed} if, for~any quotient group~$B$ of~$A$, all the~primary $\mathfrak{P}(\mathcal{C})$\nobreakdash-com\-po\-nents of~$B$ are of~finite exponent;

--\hspace{1ex}is \emph{$\mathcal{C}$\nobreakdash-bound\-ed} if, for~any quotient group~$B$ of~$A$, each primary $\mathfrak{P}(\mathcal{C})$\nobreakdash-com\-po\-nent of~$B$ has a~finite exponent and~a~cardinality not~exceeding the~cardinality of~some $\mathcal{C}$\nobreakdash-group.

We call a~nilpotent group (\emph{weakly}) \emph{$\mathcal{C}$\nobreakdash-bounded} if it has at~least one finite central series with~(weakly) $\mathcal{C}$\nobreakdash-bound\-ed abelian factors. The~classes of~$\mathcal{C}$\nobreakdash-bound\-ed abelian, $\mathcal{C}$\nobreakdash-bound\-ed nilpotent, weakly $\mathcal{C}$\nobreakdash-bound\-ed abelian, and~weakly $\mathcal{C}$\nobreakdash-bound\-ed nilpotent groups are denoted below by~$\mathcal{C}\mbox{-}\mathcal{BA}$, $\mathcal{C}\mbox{-}\mathcal{BN}$, $\mathcal{C}\mbox{-}w\mathcal{BA}$, and~$\mathcal{C}\mbox{-}w\mathcal{BN}$ respectively.

Let us note that if the~class~$\mathcal{C}$ is root, then a~finite $\mathcal{C}$\nobreakdash-group can have an~arbitrarily large order (see Proposition~\ref{pe401} below) and~therefore any finitely generated nilpotent group is $\mathcal{C}$\nobreakdash-bound\-ed. It~also follows from~Proposition~\ref{pe403} that a~weakly $\mathcal{C}$\nobreakdash-bound\-ed nilpotent group~$X$ is $\mathcal{C}$\nobreakdash-bound\-ed if $\mathcal{C}$ contains infinite groups and~the~cardinality of~$X$ is less than~$\aleph_{\infty}$.

The~first result of~this article is Theorem~\ref{te201} that describes the~abelian groups having the~property~$\mathcal{C}\mbox{-}\mathfrak{Sep}$.

\begin{etheorem}\label{te201}
If $\mathcal{C}$ is a~root class of~groups consisting only of~periodic groups and~$X$ is an~abelian group, then the~following statements hold.

\textup{1.}\hspace{1ex}If $X$ has the~property~$\mathcal{C}\mbox{-}\mathfrak{Sep}$, then it is weakly $\mathcal{C}$\nobreakdash-bound\-ed.

\textup{2.}\hspace{1ex}If $X$ is weakly $\mathcal{C}$\nobreakdash-bound\-ed, then it has the~properties $\mathcal{C}\mbox{-}\mathfrak{Sep}$ and~$\mathcal{P}\mbox{-}\mathfrak{Sep}$, where
$$
\mathcal{P} = \bigcup_{p \in \mathfrak{P}(\mathcal{C})} \mathcal{F}_{p}
$$
and~$\mathcal{F}_{p}$ is the~class of~finite $p$\nobreakdash-groups.
\end{etheorem}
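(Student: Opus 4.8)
The plan is to prove both statements by reducing $\mathcal{C}$\nobreakdash-separability of a subgroup to a residual property of a quotient group, and then to exploit the fact that a periodic root class contains all finite $p$\nobreakdash-groups with $p \in \mathfrak{P}(\mathcal{C})$ while consisting only of groups of finite exponent. Since $p \in \mathfrak{P}(\mathcal{C})$ gives $\mathbb{Z}/p \in \mathcal{C}$ (closure under subgroups) and every finite $p$\nobreakdash-group is built from copies of $\mathbb{Z}/p$ by extensions, closure under extensions yields $\mathcal{F}_{p} \subseteq \mathcal{C}$ and hence $\mathcal{P} \subseteq \mathcal{C}$ (cf.\ Proposition~\ref{pe401}). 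Consequently every homomorphism witnessing $\mathcal{P}$\nobreakdash-separability also witnesses $\mathcal{C}$\nobreakdash-separability, so in Statement~2 it suffices to establish the property~$\mathcal{P}\mbox{-}\mathfrak{Sep}$; the property~$\mathcal{C}\mbox{-}\mathfrak{Sep}$ then follows for free. Moreover, closure under the Cartesian products $\prod_{y \in Y}X_{y}$ forces every $\mathcal{C}$\nobreakdash-group to have finite exponent, since otherwise an infinite $\mathcal{C}$\nobreakdash-group of unbounded exponent would produce a non\nobreakdash-periodic product.

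For Statement~2, let $Y$ be a $\mathfrak{P}(\mathcal{C})^{\prime}$\nobreakdash-iso\-lated subgroup of the weakly $\mathcal{C}$\nobreakdash-bound\-ed abelian group~$X$. Because $\mathcal{F}_{p}$ is closed under quotients, $Y$ is $\mathcal{P}$\nobreakdash-separable in~$X$ exactly when $B = X/Y$ is residually a~$\mathcal{P}$\nobreakdash-group; here $B$ is again weakly $\mathcal{C}$\nobreakdash-bound\-ed (any quotient of~$B$ is a~quotient of~$X$) and has no~$\mathfrak{P}(\mathcal{C})^{\prime}$\nobreakdash-tor\-sion (this is precisely the~isolation of~$Y$). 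Given $b \neq 0$ in~$B$, I would separate it as follows. If $b$ has finite order, that order is a~$\mathfrak{P}(\mathcal{C})$\nobreakdash-number, so some $p \in \mathfrak{P}(\mathcal{C})$ divides it and the~$p$\nobreakdash-primary part~$b_{p}$ is nonzero; weak boundedness (applied to~$B$ itself) makes the~primary component~$T_{p}(B)$ of~finite exponent, hence a~bounded pure subgroup, hence a~direct summand of~$B$ by~Kulikov's theorem, and projection onto a~suitable finite cyclic summand of~$T_{p}(B)$ yields a~finite $p$\nobreakdash-quotient detecting~$b$. If $b$ has infinite order, its image~$\bar b$ in~the~tor\-sion\nobreakdash-free quotient $B/T(B)$ is nonzero; should $\bar b$ be infinitely $p$\nobreakdash-divisible for~some $p \in \mathfrak{P}(\mathcal{C})$, the~subgroup it generates together with its $p$\nobreakdash-th roots would give a~quotient of~$B$ containing a~copy of~$\mathbb{Z}(p^{\infty})$, contradicting weak boundedness; thus $\bar b \notin p^{n}(B/T(B))$ for~some $p$ and~$n$, and mapping through the~bounded $p$\nobreakdash-group $(B/T(B))/p^{n}(B/T(B))$ produces the~required finite $p$\nobreakdash-quotient.

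For Statement~1, I would argue contrapositively: if $X$ is not weakly $\mathcal{C}$\nobreakdash-bound\-ed, then some quotient $B = X/K$ has a~primary component~$T_{p}(B)$ of~infinite exponent with $p \in \mathfrak{P}(\mathcal{C})$. An~unbounded abelian $p$\nobreakdash-group admits $\mathbb{Z}(p^{\infty})$ as an~epimorphic image (via a~basic subgroup, whose divisible quotient is a~sum of~Pr\"ufer groups), and since $\mathbb{Z}(p^{\infty})$ is divisible hence injective, the~surjection $T_{p}(B) \to \mathbb{Z}(p^{\infty})$ extends to an~epimorphism $B \to \mathbb{Z}(p^{\infty})$; composing with $X \to B$ gives a~surjection $\pi\colon X \to \mathbb{Z}(p^{\infty})$. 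Put $Y = \ker\pi$. Then $X/Y \cong \mathbb{Z}(p^{\infty})$ is a~$p$\nobreakdash-group, so it has no~$\mathfrak{P}(\mathcal{C})^{\prime}$\nobreakdash-tor\-sion and $Y$ is $\mathfrak{P}(\mathcal{C})^{\prime}$\nobreakdash-iso\-lated. But $Y$ is not $\mathcal{C}$\nobreakdash-separable: any separating homomorphism $\sigma\colon X \to C \in \mathcal{C}$ for~an~element $x \notin Y$ would, after replacing $C$ by~the~abelian group~$X\sigma$, induce a~nontrivial homomorphism from~the~divisible group $X/Y \cong \mathbb{Z}(p^{\infty})$ into~the~finite\nobreakdash-exponent group $X\sigma/Y\sigma$, which is impossible because a~divisible group has no~nontrivial homomorphism into~a~group of~finite exponent. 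Hence $X$ fails~$\mathcal{C}\mbox{-}\mathfrak{Sep}$.

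The~main obstacle I expect is the~passage through $\mathbb{Z}(p^{\infty})$ in~both directions: in~Statement~1, realizing the~Pr\"ufer group as an~epimorphic image of~an~arbitrary unbounded $p$\nobreakdash-group and extending the~surjection to~all of~$B$; and in~Statement~2, the~dual task of~certifying that weak $\mathcal{C}$\nobreakdash-bound\-ed\-ness rules out exactly the~divisible ($\mathbb{Z}(p^{\infty})$\nobreakdash- or $\mathbb{Q}$\nobreakdash-like) obstructions to~building finite $p$\nobreakdash-quotients. Both rest on~the~structural fact that periodic root classes consist of~finite\nobreakdash-exponent groups, which converts ``divisible image'' into ``trivial image'' and thereby links weak boundedness to~separability.
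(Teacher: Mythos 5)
Your proposal is correct and follows essentially the same route as the paper: both parts reduce the separability of a $\mathfrak{P}(\mathcal{C})^{\prime}$\nobreakdash-iso\-lat\-ed subgroup~$Y$ to a residual property of~$X/Y$, detect elements through bounded $p$\nobreakdash-quotients (direct sums of cyclic groups, hence residually finite\nobreakdash-$p$), and obtain the failure of $\mathcal{C}\mbox{-}\mathfrak{Sep}$ from a $\mathfrak{P}(\mathcal{C})^{\prime}$\nobreakdash-iso\-lat\-ed subgroup whose quotient is (or contains) a $p$\nobreakdash-quasi\-cyclic group, which cannot map nontrivially to the finite-exponent groups of~$\mathcal{C}$. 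The only differences are in which standard abelian-group facts are invoked --- basic subgroups, injectivity of $\mathbb{Z}(p^{\infty})$, and Kulikov's summand theorem in place of the Pr\"ufer-theorem arguments the paper packages into its Proposition~\ref{pe501} and the $A^{q} \cap T = 1$ trick --- and these are interchangeable.
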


Thus, an~abelian group has the~property~$\mathcal{C}\mbox{-}\mathfrak{Sep}$ if and~only if it is weakly $\mathcal{C}$\nobreakdash-bound\-ed. However, this property may not hold for~weakly $\mathcal{C}$\nobreakdash-bound\-ed nilpotent groups: the~corresponding example is constructed in~Section~\ref{se09}. The~main result of~this article is Theorem~\ref{te202} given below, which says that any $\mathcal{C}$\nobreakdash-bound\-ed nilpotent group enjoys $\mathcal{C}\mbox{-}\mathfrak{Sep}$. In~fact, this theorem states somewhat more, so its formulation must be preceded by~several definitions.

Suppose again that $\mathfrak{P}$ is an~arbitrary set of~primes, $X$ is a~group, and~$Y$ is a~subgroup of~$X$. It~is easy to~see that the~intersection of~any number of~$\mathfrak{P}^{\prime}$\nobreakdash-iso\-lat\-ed subgroups of~$X$ is in~turn a~$\mathfrak{P}^{\prime}$\nobreakdash-iso\-lat\-ed subgroup and~therefore there exists the~smallest $\mathfrak{P}^{\prime}$\nobreakdash-iso\-lat\-ed subgroup containing~$Y$. It~is called the~\emph{$\mathfrak{P}^{\prime}$\nobreakdash-iso\-la\-tor} of~$Y$ in~$X$, and~we denote it by~$\mathfrak{P}^{\prime}\mbox{-}\mathfrak{Is}(X, Y)$. We also denote by~$\mathfrak{P}^{\prime}\mbox{-}\mathfrak{Rt}(X, Y)$ the~\emph{set of~$\mathfrak{P}^{\prime}$\nobreakdash-roots} extracted in~$X$ from~the~elements of~$Y$. More accurately, an~element $x \in X$ belongs to~$\mathfrak{P}^{\prime}\mbox{-}\mathfrak{Rt}(X, Y)$ if there exists a~$\mathfrak{P}^{\prime}$\nobreakdash-num\-ber~$q$ such that $x^{q} \in Y$. Obviously, the~set $\mathfrak{P}^{\prime}\mbox{-}\mathfrak{Rt}(X, Y)$ is contained in~the~subgroup $\mathfrak{P}^{\prime}\mbox{-}\mathfrak{Is}(X, Y)$ and~coincides with~the~latter if and~only if it is itself a~subgroup.

\begin{etheorem}\label{te202}
Suppose that $\mathcal{C}$ is a~root class of~groups consisting only of~periodic groups, $X$ is a~$\mathcal{C}\mbox{-}\mathcal{BN}$\nobreakdash-group, and~$Y$ is a~subgroup of~$X$. Then the~$\mathfrak{P}(\mathcal{C})^{\prime}$\nobreakdash-iso\-la\-tor $\mathfrak{P}(\mathcal{C})^{\prime}\mbox{-}\mathfrak{Is}(X, Y)$ is $\mathcal{C}$\nobreakdash-sepa\-ra\-ble in~$X$ and~coincides with~the~set $\mathfrak{P}(\mathcal{C})^{\prime}\mbox{-}\mathfrak{Rt}(X, Y)$. In~particular, $X$ has the~property~$\mathcal{C}\mbox{-}\mathfrak{Sep}$. If~$X$ is $\mathfrak{P}(\mathcal{C})^{\prime}$\nobreakdash-tor\-sion-free, then $Y$ and~$\mathfrak{P}(\mathcal{C})^{\prime}\mbox{-}\mathfrak{Is}(X, Y)$ have the~same nilpotency classes.
\end{etheorem}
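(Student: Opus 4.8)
The plan is to treat the three assertions in turn, establishing the equality of the isolator and the root set first, since the separability argument relies on it. Set $\pi = \mathfrak{P}(\mathcal{C})$ and write $R = \mathfrak{P}(\mathcal{C})^{\prime}\mbox{-}\mathfrak{Rt}(X, Y)$, $I = \mathfrak{P}(\mathcal{C})^{\prime}\mbox{-}\mathfrak{Is}(X, Y)$. Since $R \subseteq I$ always holds and $I$ is the smallest $\pi'$-isolated subgroup containing $Y$, it suffices to show that $R$ is a subgroup: it is then automatically $\pi'$-isolated and contains $Y$, forcing $R = I$. This is the classical fact that, in a nilpotent group, the set of roots of a subgroup is a subgroup; I would prove it by induction on the nilpotency class of $X$, using commutator collection to show that if $a^{m}, b^{m} \in Y$ for $\pi'$-numbers $m, n$, then some $\pi'$-power of $ab$ again lies in $Y$. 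If such a lemma has been recorded earlier, I would simply invoke it.

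For the separability statement I would argue by induction on the length $n$ of the given central series $1 = X_{0} \le X_{1} \le \dots \le X_{n} = X$ with $\mathcal{C}$-bounded abelian factors. The base case $n = 1$ is exactly Theorem~\ref{te201}: a $\mathcal{C}$-bounded abelian group is weakly $\mathcal{C}$-bounded, so its $\pi'$-isolated subgroup $I$ is $\mathcal{C}$-separable. For $n > 1$ put $Z = X_{1}$; then $[X, Z] = 1$, so $Z$ is central and $\mathcal{C}$-bounded abelian, while $X/Z$ inherits a central series of length $n - 1$ with $\mathcal{C}$-bounded abelian factors, hence is again a $\mathcal{C}\mbox{-}\mathcal{BN}$-group. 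Fix $x \in X \setminus I$ and set $J = \mathfrak{P}(\mathcal{C})^{\prime}\mbox{-}\mathfrak{Is}(X, YZ)$; by the first part (applied to $YZ$) one checks $I \subseteq J$, $Z \subseteq J$, and that $J/Z$ is precisely the $\pi'$-isolator of $YZ/Z$ in $X/Z$. If $x \notin J$, then the image $\bar{x}$ avoids that isolator, so the inductive hypothesis supplies a homomorphism of $X/Z$ onto a $\mathcal{C}$-group separating $\bar{x}$ from it; composing with the projection $X \to X/Z$ gives $\sigma \colon X \to C \in \mathcal{C}$ with $x\sigma \notin J\sigma \supseteq I\sigma$, as required.

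The remaining, and principal, difficulty is the case $x \in J \setminus I$: here some $\pi'$-power $x^{q}$ lies in $YZ$ while no $\pi'$-power of $x$ lies in $Y$, so the obstruction to $x \in I$ is concentrated in the central factor $Z$. The plan is to detect this obstruction abelianly. Writing $x^{q} = yz$ with $y \in Y$, $z \in Z$, one argues that $z$ represents a class in $Z$ that is nontrivial and not $\pi'$-divisible modulo the central subgroup generated by the relevant values of $Y$ and commutators; applying the abelian case (Theorem~\ref{te201} in its $\mathcal{P}$-separable form) to the $\mathcal{C}$-bounded abelian group $Z$ yields a homomorphism of $Z$ onto a finite $\pi$-group that distinguishes $z$. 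Since finite nilpotent $\pi$-groups lie in $\mathcal{C}$ — a finite $p$-group with $p \in \pi$ is built by extensions and Cartesian powers from the subgroup $\mathbb{Z}/p\mathbb{Z}$ of a $\mathcal{C}$-group — it then suffices to realize a finite $\pi$-group quotient of all of $X$ carrying this separation. I expect the extension step, reconciling the centrally defined map on $Z$ with the action of $X$ and the possibly non-normal subgroup $I$, to be the main obstacle; it is precisely where the full strength of $\mathcal{C}$-boundedness (finite exponent \emph{and} bounded cardinality of every primary component of every quotient) is consumed, guaranteeing the requisite finite $\pi$-quotients of $X$.

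Finally, suppose $X$ is $\pi'$-torsion-free and let $k$ be the nilpotency class of $Y$. Then $Y \subseteq I$ gives $\mathrm{cl}(Y) \le \mathrm{cl}(I)$, while the standard isolator identity for torsion-free nilpotent groups yields $\gamma_{k+1}(I) \subseteq \mathfrak{P}(\mathcal{C})^{\prime}\mbox{-}\mathfrak{Is}\bigl(X, \gamma_{k+1}(Y)\bigr) = \mathfrak{P}(\mathcal{C})^{\prime}\mbox{-}\mathfrak{Is}(X, 1) = 1$, the last equality because $X$ has no $\pi'$-torsion. Hence $\mathrm{cl}(I) \le k$ and the two classes coincide. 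The commutation of the lower central series with $\pi'$-isolation used here is part of the classical isolator calculus, which I would cite.
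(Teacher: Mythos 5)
Your first step (showing $\mathfrak{P}(\mathcal{C})^{\prime}\mbox{-}\mathfrak{Rt}(X,Y)$ is a subgroup, hence equals the isolator) and your last step (the isolator calculus $\gamma_{k+1}(I)\leqslant \mathfrak{P}(\mathcal{C})^{\prime}\mbox{-}\mathfrak{Is}(X,\gamma_{k+1}(Y))=1$) are sound and match, respectively, the paper's appeal to the standard theory of isolators in locally nilpotent groups and its alternative route through the $\mathcal{C}$\nobreakdash-closure (Propositions~\ref{pe405} and~\ref{pe408} plus the residual $\mathcal{C}$\nobreakdash-ness of $X$). The gap is in the separability argument, precisely in the case you yourself flag as ``the main obstacle'': $x\in J\setminus I$, where $x^{q}=yz$ with $y\in Y$, $z\in Z=X_{1}$. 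Your plan is to separate $z$ inside the abelian group $Z$ and then ``realize a finite $\pi$\nobreakdash-group quotient of all of $X$ carrying this separation,'' but no mechanism for this lifting is given, and it is not routine. Two concrete problems: (i)~the decomposition $x^{q}=yz$ is not unique ($y$ and $z$ may be altered by elements of $Y\cap Z$ and, after varying $q$, by further elements), so ``the class of $z$'' that you propose to detect is not well defined without additional work; (ii)~even granted a homomorphism of $Z$ onto a finite $\pi$\nobreakdash-group killing the obstruction, one must produce $N\in\mathcal{C}^{*}(X)$ with $x\notin IN$, i.e.\ a congruence on the whole of $X$ compatible with that map and with the (possibly non-normal) subgroup~$I$. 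This lifting is exactly the content of the paper's quasiregularity result (Proposition~\ref{pe603}), whose proof is itself an induction along the normalizer chain of the subgroup and consumes Propositions~\ref{pe601},~\ref{pe602}, and Mal'cev's divisibility lemma (Proposition~\ref{pe410}); it cannot be waved through as an ``extension step.''

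For comparison, the paper sidesteps your hard case entirely by choosing a different induction variable: not the length of a central series of $X$, but the length of the normalizer chain $\mathfrak{I}=\mathfrak{I}_{0}\leqslant\mathfrak{I}_{1}\leqslant\dots\leqslant\mathfrak{I}_{n}=X$ of the isolator itself. The base case ($\mathfrak{I}$ normal) reduces to the fact that $X/\mathfrak{I}$ is residually a $\mathcal{C}$\nobreakdash-group (Proposition~\ref{pe701}, which is where Theorem~\ref{te201} and the central series actually enter), and the inductive step is exactly one application of quasiregularity to pull a separating subgroup of $\mathcal{C}^{*}(\mathfrak{I}_{1})$ back to $\mathcal{C}^{*}(X)$. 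If you want to salvage your central-series induction, you would still need to prove a statement equivalent to Proposition~\ref{pe603} first, so the normalizer-chain route is not avoidable in substance, only in packaging.
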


We note that the~requirement for~the~group~$X$ from~Theorem~\ref{te202} to~be $\mathfrak{P}(\mathcal{C})^{\prime}$\nobreakdash-tor\-sion-free is essential for~the~equality of~the~nilpotency classes of~$Y$ and~$\mathfrak{P}(\mathcal{C})^{\prime}\mbox{-}\mathfrak{Is}(X, Y)$. For~example, if $Y$ is an~infinite cyclic group, $Z$ is a~finite nilpotent $\mathfrak{P}(\mathcal{C})^{\prime}$\nobreakdash-group of~class~$c$, and~$X = Y \times Z$, then $\mathfrak{P}(\mathcal{C})^{\prime}\mbox{-}\mathfrak{Is}(X, Y) = X$ and~therefore the~nilpotency class of~$\mathfrak{P}(\mathcal{C})^{\prime}\mbox{-}\mathfrak{Is}(X, Y)$ is equal to~$c$, while $Y$ is a~group of~class~$1$.

Theorem~\ref{te201} shows that an~abelian group with~the~property~$\mathcal{C}\mbox{-}\mathfrak{Sep}$ need not be $\mathcal{C}$\nobreakdash-bound\-ed. At~the~same time, the~following assertion holds.

\begin{ecorollary}\label{ce203}
Let $\mathcal{C}$ be a~root class of~groups consisting only of~periodic groups. A~tor\-sion-free nilpotent group has the~property~$\mathcal{C}\mbox{-}\mathfrak{Sep}$ if and~only if it is $\mathcal{C}$\nobreakdash-bound\-ed.
\end{ecorollary}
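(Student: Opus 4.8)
The plan is to prove Corollary~\ref{ce203} by combining Theorem~\ref{te202} (the sufficiency direction for $\mathcal{C}$-bounded nilpotent groups) with a converse argument that extracts $\mathcal{C}$-boundedness from the property~$\mathcal{C}\mbox{-}\mathfrak{Sep}$ in the torsion-free case. The forward implication is essentially free: if a torsion-free nilpotent group~$N$ is $\mathcal{C}$-bounded, then it is in particular a $\mathcal{C}\mbox{-}\mathcal{BN}$-group, so Theorem~\ref{te202} applies directly and yields that $N$ has the property~$\mathcal{C}\mbox{-}\mathfrak{Sep}$. So the whole content of the corollary lies in the reverse implication, and I would state and dispatch the easy direction in a single sentence before turning to the substance.

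For the converse, suppose $N$ is torsion-free nilpotent and has the property~$\mathcal{C}\mbox{-}\mathfrak{Sep}$; I want to conclude $N \in \mathcal{C}\mbox{-}\mathcal{BN}$. The natural strategy is to reduce to the abelian case already settled by Theorem~\ref{te201}. First I would observe that the property~$\mathcal{C}\mbox{-}\mathfrak{Sep}$ is inherited by suitable quotients and subgroups: in particular, passing from $N$ to the abelianization~$N^{\mathrm{ab}} = N/[N,N]$, or more generally to the abelian factors of a central series, should preserve enough separability to apply Theorem~\ref{te201} to each factor. By Theorem~\ref{te201}, each such abelian factor with the property~$\mathcal{C}\mbox{-}\mathfrak{Sep}$ is weakly $\mathcal{C}$-bounded; the remaining task is to upgrade ``weakly $\mathcal{C}$-bounded'' to ``$\mathcal{C}$-bounded'' (i.e.~to control cardinalities of the primary components, not just their exponents). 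This upgrade is where the torsion-freeness of~$N$ must be used decisively.

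The hard part will be this upgrade and, relatedly, making sure that the property~$\mathcal{C}\mbox{-}\mathfrak{Sep}$ genuinely descends to the relevant abelian factors. The difficulty is that $\mathfrak{P}(\mathcal{C})^{\prime}$-isolated subgroups of a quotient need not lift back to $\mathfrak{P}(\mathcal{C})^{\prime}$-isolated subgroups of~$N$, so the inheritance of $\mathcal{C}\mbox{-}\mathfrak{Sep}$ under quotients is not automatic and likely requires an auxiliary lemma (presumably proved earlier in the paper) guaranteeing that in a torsion-free nilpotent group the isolator operation behaves well with respect to central series. I would structure the argument by induction on the nilpotency class: the base case is Theorem~\ref{te201}, and in the inductive step I would split $N$ via its center or a suitable central subgroup~$Z$, argue that both $Z$ and $N/Z$ inherit $\mathcal{C}\mbox{-}\mathfrak{Sep}$ (using torsion-freeness to keep isolators under control), apply the inductive hypothesis to get $\mathcal{C}$-boundedness of the factors, and then assemble a central series for~$N$ with $\mathcal{C}$-bounded abelian factors.

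For the cardinality bound specifically, I would note that in a torsion-free setting the primary $\mathfrak{P}(\mathcal{C})$-components arising in quotients are themselves quotients of the (finitely many, in each factor) invariants controlled by the weak boundedness, so once the exponent is finite the cardinality is automatically controlled by the cardinality of a $\mathcal{C}$-group via Proposition~\ref{pe401} or Proposition~\ref{pe403}; I would cite those propositions to close the gap between weak $\mathcal{C}$-boundedness and $\mathcal{C}$-boundedness rather than re-derive the cardinality estimates. This reduces the entire corollary to: the forward direction from Theorem~\ref{te202}, plus a torsion-free descent of $\mathcal{C}\mbox{-}\mathfrak{Sep}$ to abelian factors, plus Theorem~\ref{te201} and the cited propositions to convert weak boundedness into full $\mathcal{C}$-boundedness.
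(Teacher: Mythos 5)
Your overall strategy coincides with the paper's: sufficiency comes straight from Theorem~\ref{te202}, and necessity is obtained by descending to the abelian factors of the upper central series, applying Theorem~\ref{te201} there, and then using torsion-freeness to pass from weak $\mathcal{C}$\nobreakdash-bound\-ed\-ness to $\mathcal{C}$\nobreakdash-bound\-ed\-ness. However, the two steps you yourself flag as the hard part are left unresolved, and for one of them the mechanism you sketch would not work. On the descent of separability to the factors: the difficulty is not really about isolators --- Proposition~\ref{pe409} already guarantees that the members of the upper central series of a torsion-free nilpotent group are isolated, so a $\mathfrak{P}(\mathcal{C})^{\prime}$\nobreakdash-iso\-lat\-ed subgroup $Y/X_{i}$ of $X_{i+1}/X_{i}$ pulls back to a $\mathfrak{P}(\mathcal{C})^{\prime}$\nobreakdash-iso\-lat\-ed subgroup $Y$ of $X$. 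The genuine obstruction is that $\mathcal{C}$ need not be closed under taking quotient groups, so a subgroup $N \in \mathcal{C}^{*}(X)$ witnessing $x \notin YN$ does not obviously yield a subgroup of $\mathcal{C}^{*}(X_{i+1}/X_{i})$ witnessing the corresponding separation in the factor. The paper resolves this by first enlarging $\mathcal{C}$ to the root class $\mathcal{D}$ of all periodic $\mathfrak{P}(\mathcal{C})$\nobreakdash-groups of finite exponent: by Proposition~\ref{pe402} one has $\mathcal{C} \subseteq \mathcal{D}$ and $\mathfrak{P}(\mathcal{D}) = \mathfrak{P}(\mathcal{C})$, so $X$ has $\mathcal{D}\mbox{-}\mathfrak{Sep}$, and since $\mathcal{D}$ \emph{is} closed under subgroups and quotients the separability does descend to each $X_{i+1}/X_{i}$; Theorem~\ref{te201} applied to $\mathcal{D}$ then gives condition $(3)_{\mathcal{C}}$ for each factor. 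Without some such enlargement your inductive step cannot produce the required homomorphisms of the factors onto $\mathcal{C}$\nobreakdash-groups.

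The second gap is the upgrade from weak to full boundedness. Your proposed mechanism --- that once the exponents are finite the cardinalities are ``automatically controlled'' via Proposition~\ref{pe401} or~\ref{pe403} --- is not correct: weak $\mathcal{C}$\nobreakdash-bound\-ed\-ness controls only exponents, and the paper explicitly notes (right before the corollary) that a weakly $\mathcal{C}$\nobreakdash-bound\-ed abelian group need not be $\mathcal{C}$\nobreakdash-bound\-ed, so no general cardinality estimate can close this gap. What actually does the work is that the factors $X_{i+1}/X_{i}$ are themselves torsion-free (since the $X_{i}$ are isolated in the torsion-free group~$X$, again by Proposition~\ref{pe409}); hence their primary $\mathfrak{P}(\mathcal{C})$\nobreakdash-com\-po\-nents are trivial, condition $(4)_{\mathcal{C}}$ holds for them vacuously, and Statement~3 of Proposition~\ref{pe501} ($(3)_{\mathcal{C}} \wedge (4)_{\mathcal{C}} \Leftrightarrow (5)_{\mathcal{C}}$) yields $\mathcal{C}$\nobreakdash-bound\-ed\-ness of each factor, whence $X \in \mathcal{C}\mbox{-}\mathcal{BN}$. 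This is the decisive --- and much simpler --- use of torsion-freeness, and it is a different argument from the one you outline.
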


It turns out that the~analogues of~Theorem~\ref{te202} and~Corollary~\ref{ce203} do not hold even for~supersolvable groups. For~example, if $\mathcal{C}$ is a~root class of~groups consisting only of~periodic groups, then, by~the~main theorem from~\cite{Tumanova2017SMJ} and~Proposition~\ref{pe807} given below, the~supersolvable Baumslag--Solitar group
$$
\mathit{BS}(1, -1) = \langle a, b;\ a^{-1}ba = b^{-1} \rangle
$$
is residually a~$\mathcal{C}$\nobreakdash-group if and~only if $2 \in \mathfrak{P}(\mathcal{C})$. At~the~same time, $\mathit{BS}(1, -1)$ is tor\-sion-free and~therefore its trivial subgroup is always $\mathfrak{P}(\mathcal{C})^{\prime}$\nobreakdash-iso\-lat\-ed.

Throughout the~paper, if $\mathcal{C}$ is a~root class of~groups consisting only of~periodic groups, then we denote by~$\mathcal{C}\mbox{-}\mathcal{BN}_{\mathfrak{P}(\mathcal{C})}$ the~class of~all $\mathfrak{P}(\mathcal{C})^{\prime}$\nobreakdash-tor\-sion-free $\mathcal{C}\mbox{-}\mathcal{BN}$\nobreakdash-groups. This class arises quite naturally because, by~Theorem~\ref{te202}, only such a~$\mathcal{C}\mbox{-}\mathcal{BN}$\nobreakdash-group is residually a~$\mathcal{C}$\nobreakdash-group. The~theorem given below serves as~a~partial generalization of~Theorem~\ref{te202}.{\parfillskip=0pt\par}

\begin{etheorem}\label{te204}
Suppose that $\mathcal{C}$ is a~root class of~groups consisting only of~periodic groups and~$\mathcal{NC}$ is a~class of~nilpotent $\mathcal{C}$\nobreakdash-groups. Suppose also that $X$ is residually a~$\mathcal{C}\mbox{-}\mathcal{BN}_{\mathfrak{P}(\mathcal{C})}$-group, $Y$ is a~subgroup of~$X$, and~there exists a~homomorphism~$\sigma$ of~$X$ onto~a~$\mathcal{C}\mbox{-}\mathcal{BN}_{\mathfrak{P}(\mathcal{C})}$-group that acts injectively on~$Y$. Then the~following statements hold.

\textup{1.}\hspace{1ex}The~$\mathfrak{P}(\mathcal{C})^{\prime}$\nobreakdash-iso\-la\-tor $\mathfrak{P}(\mathcal{C})^{\prime}\mbox{-}\mathfrak{Is}(X, Y)$ is $\mathcal{NC}$\nobreakdash-sepa\-ra\-ble in~$X$, coincides with~the~set $\mathfrak{P}(\mathcal{C})^{\prime}\mbox{-}\mathfrak{Rt}(X, Y)$, and~has the~same nilpotency class as~$Y$. The~group~$X$ is residually an~$\mathcal{NC}$\nobreakdash-group and~therefore has no~$\mathfrak{P}(\mathcal{C})^{\prime}$\nobreakdash-tor\-sion.

\textup{2.}\hspace{1ex}The~homomorphism~$\sigma$ acts injectively on~the~subgroup $\mathfrak{P}(\mathcal{C})^{\prime}\mbox{-}\mathfrak{Is}(X, Y)$; as~a~result, the~latter belongs to~$\mathcal{C}\mbox{-}\mathcal{BN}_{\mathfrak{P}(\mathcal{C})}$.
\end{etheorem}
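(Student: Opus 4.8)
The plan is to transfer Theorem~\ref{te202}, which already applies to every $\mathcal{C}\mbox{-}\mathcal{BN}_{\mathfrak{P}(\mathcal{C})}$-group, from suitable homomorphic images of~$X$ back to~$X$ itself. Write $\mathfrak{P} = \mathfrak{P}(\mathcal{C})$ and $I = \mathfrak{P}^{\prime}\mbox{-}\mathfrak{Is}(X, Y)$. Since every $\mathcal{C}\mbox{-}\mathcal{BN}_{\mathfrak{P}}$-group is nilpotent, $\mathfrak{P}^{\prime}$-torsion-free, and has unique $\mathfrak{P}^{\prime}$-roots (if $a^{n} = b^{n}$ with~$n$ a~$\mathfrak{P}^{\prime}$-number, then $a = b$), the residual hypothesis at once gives that $X$ is $\mathfrak{P}^{\prime}$-torsion-free: any $\mathfrak{P}^{\prime}$-torsion element would survive in some $\mathcal{C}\mbox{-}\mathcal{BN}_{\mathfrak{P}}$-quotient. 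I would also record that finite direct products and subgroups of $\mathcal{C}\mbox{-}\mathcal{BN}_{\mathfrak{P}}$-groups are again $\mathcal{C}\mbox{-}\mathcal{BN}_{\mathfrak{P}}$-groups, so that diagonal combinations of homomorphisms can be fed into Theorem~\ref{te202}.

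The crux is to prove that $\sigma$ acts injectively not merely on~$Y$ but on the whole isolator~$I$. Suppose $1 \neq x \in I \cap \ker\sigma$ and choose, by residuality, a homomorphism~$\phi$ onto a~$\mathcal{C}\mbox{-}\mathcal{BN}_{\mathfrak{P}}$-group with $x\phi \neq 1$; the diagonal map $\theta = (\sigma, \phi)$ is still injective on~$Y$ and has a~$\mathcal{C}\mbox{-}\mathcal{BN}_{\mathfrak{P}}$-image. Because $\theta^{-1}(\mathfrak{P}^{\prime}\mbox{-}\mathfrak{Is}(X\theta, Y\theta))$ is $\mathfrak{P}^{\prime}$-isolated and contains~$Y$, it contains~$I$, so $x\theta$ lies in $\mathfrak{P}^{\prime}\mbox{-}\mathfrak{Is}(X\theta, Y\theta) = \mathfrak{P}^{\prime}\mbox{-}\mathfrak{Rt}(X\theta, Y\theta)$ by Theorem~\ref{te202}; hence $(x\theta)^{s} \in Y\theta$ for some $\mathfrak{P}^{\prime}$-number~$s$. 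Comparing the two coordinates, the $\sigma$-component forces the relevant element of~$Y$ to be trivial (injectivity on~$Y$), after which the $\phi$-component gives $(x\phi)^{s} = 1$, whence $x\phi = 1$, a contradiction. Once injectivity on~$I$ is in hand, the structural claims follow quickly (writing $c(G)$ for the nilpotency class): from $I \subseteq \sigma^{-1}(\mathfrak{P}^{\prime}\mbox{-}\mathfrak{Is}(X\sigma, Y\sigma))$ we get $I \cong I\sigma$ inside a subgroup of a~$\mathcal{C}\mbox{-}\mathcal{BN}_{\mathfrak{P}}$-group, so $I \in \mathcal{C}\mbox{-}\mathcal{BN}_{\mathfrak{P}}$; the identity $I = \mathfrak{P}^{\prime}\mbox{-}\mathfrak{Rt}(X, Y)$ comes from pulling a root relation $(x\sigma)^{q} \in Y\sigma$ back through the injective $\sigma|_{I}$; and sandwiching $c(Y) \leq c(I) = c(I\sigma) \leq c(\mathfrak{P}^{\prime}\mbox{-}\mathfrak{Is}(X\sigma, Y\sigma)) = c(Y\sigma) = c(Y)$ (the middle equality by Theorem~\ref{te202}, as $X\sigma$ is $\mathfrak{P}^{\prime}$-torsion-free) yields equality of the classes of~$I$ and~$Y$.

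For the separability statements I would prove that $I = \bigcap_{\phi} \phi^{-1}(\mathfrak{P}^{\prime}\mbox{-}\mathfrak{Is}(X\phi, Y\phi))$, where $\phi$ ranges over all homomorphisms of~$X$ onto $\mathcal{C}\mbox{-}\mathcal{BN}_{\mathfrak{P}}$-groups; the inclusion $\subseteq$ is the isolatedness argument above. For $\supseteq$, take~$x$ in every such preimage; applying~$\sigma$ gives $(x^{q})\sigma = y\sigma$ with $y \in Y$, so $k = y^{-1}x^{q} \in \ker\sigma$, and it suffices to show $k = 1$. If not, pick~$\psi$ onto a~$\mathcal{C}\mbox{-}\mathcal{BN}_{\mathfrak{P}}$-group with $k\psi \neq 1$ and feed the diagonal map $\rho = (\sigma, \psi)$ into Theorem~\ref{te202}: membership of~$x$ in the corresponding preimage produces a second root relation $(x^{m})\rho \in Y\rho$ with some $y^{\prime} \in Y$, and combining the two relations (using injectivity of~$\sigma$ on~$Y$ to obtain $y^{m} = y^{\prime q}$ in~$X$) reduces everything, in the $\psi$-image, to an equation $(ab)^{m} = a^{m}$ with $b = k\psi$; uniqueness of $\mathfrak{P}^{\prime}$-roots there forces $b = 1$, contradicting $k\psi \neq 1$. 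Granting this description of~$I$, any $x \notin I$ is moved outside $\mathfrak{P}^{\prime}\mbox{-}\mathfrak{Is}(X\phi, Y\phi)$ by some~$\phi$; since that isolator is $\mathcal{C}$-separable in the \emph{nilpotent} group~$X\phi$ with nilpotent—hence $\mathcal{NC}$—quotients, and since $I\phi$ lies inside it, composing~$\phi$ with a separating map gives an $\mathcal{NC}$-quotient separating~$x$ from~$I$. Composing the residual $\mathcal{C}\mbox{-}\mathcal{BN}_{\mathfrak{P}}$-ness of~$X$ with the residual $\mathcal{NC}$-ness of each such quotient (Theorem~\ref{te202} applied to its $\mathfrak{P}^{\prime}$-isolated trivial subgroup) shows that $X$ is residually an~$\mathcal{NC}$-group.

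\textbf{Main obstacle.} The single delicate point, reused both in the injectivity-on-$I$ step and in the equality $I = \bigcap_{\phi}\phi^{-1}(\mathfrak{P}^{\prime}\mbox{-}\mathfrak{Is}(X\phi, Y\phi))$, is controlling $\ker\sigma$ uniformly across \emph{all} $\mathfrak{P}^{\prime}$-powers at once: a lone root relation only places $x^{q}$ in $Y\cdot\ker\sigma$ rather than in~$Y$, and the kernel contribution must be annihilated. The device that resolves it is to enlarge~$\sigma$ by a residual homomorphism detecting the offending kernel element and then exploit uniqueness of $\mathfrak{P}^{\prime}$-roots in the nilpotent, $\mathfrak{P}^{\prime}$-torsion-free image to collapse the resulting power equation.
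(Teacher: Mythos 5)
Your argument is correct and is essentially the paper's: both proofs pass to quotients of~$X$ by~$\ker\sigma$ and by smaller kernels supplied by the residual hypothesis (your diagonal maps $(\sigma,\phi)$ are the paper's quotients by $U\cap\ker\sigma$ with $U\in\mathcal{C}\mbox{-}\mathcal{BN}_{\mathfrak{P}(\mathcal{C})}^{*}(X)$), extract two $\mathfrak{P}(\mathcal{C})^{\prime}$-root relations over~$Y$, identify the $Y$-parts via $Y\cap\ker\sigma=1$, and cancel the kernel contribution by uniqueness of $\mathfrak{P}(\mathcal{C})^{\prime}$-root extraction in a nilpotent $\mathfrak{P}(\mathcal{C})^{\prime}$-torsion-free image. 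The only difference is organizational: you first prove that $\sigma$ is injective on the isolator and derive the root description from that, whereas the paper establishes $\mathcal{NC}\mbox{-}\mathfrak{Cl}(X,Y)=\mathfrak{P}(\mathcal{C})^{\prime}\mbox{-}\mathfrak{Rt}(X,Y)$ in a single computation (comparing the minimal root exponents $q$ and $r=qs$) and then obtains Statement~2 as an immediate corollary.
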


We note that if $X \in \mathcal{C}\mbox{-}\mathcal{BN}$, then any homomorphic image of~$X$ is nilpotent. Therefore, Theorem~\ref{te202} actually states that the~subgroup $\mathfrak{P}(\mathcal{C})^{\prime}\mbox{-}\mathfrak{Is}(X, Y)$ is separable in~$X$ by~the~class~$\mathcal{NC}$ of~all nilpotent $\mathcal{C}$\nobreakdash-groups. A~similar argument does not work for~residually $\mathcal{C}\mbox{-}\mathcal{BN}_{\mathfrak{P}(\mathcal{C})}$\nobreakdash-groups, and,~as~a~result, the~class $\mathcal{NC}$ appears in~Theorem~\ref{te204} explicitly.{\parfillskip=0pt\par}

We also note that it is essential for~the~$\mathcal{C}$\nobreakdash-sepa\-ra\-bil\-ity of~$\mathfrak{P}(\mathcal{C})^{\prime}\mbox{-}\mathfrak{Is}(X, Y)$ that there exists a~homomorphism of~$X$ onto~a~$\mathcal{C}\mbox{-}\mathcal{BN}_{\mathfrak{P}(\mathcal{C})}$\nobreakdash-group acting injectively on~$Y$. For~example, any non-abelian free group is residually a~finitely generated tor\-sion-free nilpotent group~\cite{Magnus1935MA, Hall1950PAMS}, but it also contains a~finitely generated isolated subgroup that is not separable by~the~class~$\mathcal{F}_{p}$ of~all finite $p$\nobreakdash-groups for~any prime number~$p$~\cite{Bardakov2004SMJ}.

\section{Some applications}\label{se03}

In~this section, we formulate a~number of~assertions which concern the~residual properties of~free constructions of~groups and~can be deduced from~known results using Theorem~\ref{te204} and~some other properties of~$\mathcal{C}\mbox{-}\mathcal{BN}$\nobreakdash-groups. The~free construction under consideration is always composed of~residually $\mathcal{C}$\nobreakdash-groups, and~Theorem~\ref{te204} completely generalizes Theorem~\ref{te202} if they are applied to~such groups. Therefore, it makes no~sense to~formulate separate corollaries from~Theorem~\ref{te202}.

Throughout this section, we assume that $\mathcal{C}$ is a~root class of~groups consisting only of~periodic groups,
$$
P = \langle A * B;\ U \rangle
$$
is the~free product of~groups~$A$ and~$B$ with~an~amalgamated subgroup~$U$,~and
$$
G^{*} = \langle G, t;\ t^{-1}Ht = K,\ \varphi \rangle
$$
is the~HNN-ex\-ten\-sion of~a~group~$G$ with~subgroups~$H$ and~$K$ associated by~an~isomorphism $\varphi\colon H \to K$ (the~definitions of~these constructions can be found, e.g.,~in~\cite{LyndonSchupp1977}). It~is also assumed that $A \ne U \ne B$ and~$H \ne G \ne K$.

\begin{etheorem}\label{te301}
Suppose that $U$ is a~retract of~$B$, $A$ is residually a~$\mathcal{C}\mbox{-}\mathcal{BN}_{\mathfrak{P}(\mathcal{C})}$\nobreakdash-group, and~there exists a~homomorphism of~$A$ onto~a~$\mathcal{C}\mbox{-}\mathcal{BN}_{\mathfrak{P}(\mathcal{C})}$\nobreakdash-group that acts injectively on~$U$. Then $P$ is residually a~$\mathcal{C}$\nobreakdash-group if and~only if $B$ has the~same property.
\end{etheorem}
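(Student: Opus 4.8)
The plan is to prove the two implications separately: the forward one is routine, and the converse is where Theorem~\ref{te204} does the work.

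For necessity, suppose $P$ is residually a $\mathcal{C}$\nobreakdash-group. In an amalgamated free product both free factors embed, so $B \le P$, and since $\mathcal{C}$ is closed under subgroups, restricting to $B$ any $\mathcal{C}$\nobreakdash-homomorphism of $P$ that separates a given non-trivial element of $B$ from~$1$ yields a $\mathcal{C}$\nobreakdash-homomorphism of $B$ with the same property. Hence $B$ is residually a $\mathcal{C}$\nobreakdash-group.

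For sufficiency I would first exploit the retraction. Let $\rho\colon B \to U$ be the retraction and $N = \ker\rho$, so $B = N \rtimes U$. Mapping $A$ identically and $B$ by $\rho$ defines a retraction $\pi\colon P \to A$ (the two maps agree on~$U$), whence $P = \bar N \rtimes A$ with $\bar N = \ker\pi$ the normal closure of $N$ in $P$. By Bass--Serre theory $\bar N$ intersects each conjugate of $A$ and of $U$ trivially while meeting $B$ in $N$, so it is a free product of conjugates of $N$ and a free group; as $N \le B$ is residually a $\mathcal{C}$\nobreakdash-group, Gruenberg's theorem~\cite{Gruenberg1957PLMS} shows $\bar N$ is residually a $\mathcal{C}$\nobreakdash-group. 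The hypotheses on $A$ and $U$ are precisely those of Theorem~\ref{te204} with $X = A$ and $Y = U$, so applying it (with $\mathcal{NC}$ the class of nilpotent $\mathcal{C}$\nobreakdash-groups) I obtain that $A$ is itself residually a $\mathcal{C}$\nobreakdash-group and that $U^{*} = \mathfrak{P}(\mathcal{C})^{\prime}\mbox{-}\mathfrak{Is}(A, U)$ is $\mathcal{NC}$\nobreakdash-separable in $A$ and belongs to $\mathcal{C}\mbox{-}\mathcal{BN}_{\mathfrak{P}(\mathcal{C})}$. To separate a non-trivial $g \in P$ I distinguish two cases. If $\pi(g) \ne 1$, I compose $\pi$ with a $\mathcal{C}$\nobreakdash-homomorphism of $A$ non-trivial on $\pi(g)$. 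The essential case is $g \in \bar N$: here I would construct a subgroup $R$, normal in all of $P$ and contained in $\bar N$, with $\bar N/R \in \mathcal{C}$ and $g \notin R$, together with $A_{0} \trianglelefteq A$ with $A/A_{0} \in \mathcal{C}$ centralizing $\bar N/R$; then $(\bar N/R) \rtimes (A/A_{0})$ is an extension of one $\mathcal{C}$\nobreakdash-group by another, hence lies in $\mathcal{C}$ (root classes are closed under extensions), and the canonical homomorphism $P \to (\bar N/R) \rtimes (A/A_{0})$ does not kill $g$.

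The two hypotheses enter exactly in building $R$. The retract makes the $B$\nobreakdash-side completely flexible: via the diagonal map $b \mapsto (\theta(b), \theta(\rho(b)))$ into $C \times C$ one checks at once that $U$ is $\mathcal{C}$\nobreakdash-separable in $B$ and that every congruence prescribed on $U$ lifts to $B$, so $B$ induces on $U$ its full intrinsic $\mathcal{C}$\nobreakdash-topology. On the other side, the membership $U^{*} \in \mathcal{C}\mbox{-}\mathcal{BN}_{\mathfrak{P}(\mathcal{C})}$ together with the property $\mathcal{C}\mbox{-}\mathfrak{Sep}$ of $U^{*}$ supplied by Theorem~\ref{te202} guarantees that the $\mathcal{C}$\nobreakdash-topology induced on $U$ from $A$ is rich enough to be matched with the one coming from $B$, and this matching is what lets $R$ be assembled from $\mathcal{C}$\nobreakdash-quotients of $N$ in an $A_{0}$\nobreakdash-invariant way.

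I expect the genuine difficulty to be precisely this $A_{0}$\nobreakdash-invariant assembly. The action of $A$ on $\bar N$ permutes the free factors through the (generally infinite) coset space $A/U$, so to keep $\bar N/R$ inside $\mathcal{C}$ one must spread a single $\mathcal{C}$\nobreakdash-quotient of $N$ over an $A$\nobreakdash-orbit indexed by the $\mathcal{C}$\nobreakdash-group $A/A_{0}$, embedding $\bar N/R$ into a Cartesian product $\prod_{A/A_{0}} C_{0}$ of the type allowed by the root-class axioms. A cleaner route, which I would ultimately prefer, is to package all of this into the known criterion for the root-class residuality of a generalized free product and merely verify its hypotheses: that $A$ and $B$ are residually $\mathcal{C}$\nobreakdash-groups, and that the $\mathcal{C}$\nobreakdash-topologies the two factors induce on $U$ agree and admit a common refinement with residually-$\mathcal{C}$ amalgam — the former furnished by Theorem~\ref{te204} and the hypothesis on $B$, the latter by the interplay just described between Theorem~\ref{te204} and the retraction.
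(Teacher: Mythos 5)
Your necessity argument is fine, and it is the only part of the statement that is routine. In the sufficiency direction your proposal stops exactly where the real work begins. The decomposition $P = \overline{N} \rtimes A$ and the free-product structure of $\overline{N}$ are correct, and Gruenberg's theorem does make $\overline{N}$ residually a $\mathcal{C}$\nobreakdash-group; but the step you yourself flag as ``the genuine difficulty'' --- producing, for a given $g \in \overline{N} \setminus \{1\}$, a subgroup $R$ normal in all of~$P$ with $\overline{N}/R \in \mathcal{C}$ and $g \notin R$, in a way compatible with the $A$\nobreakdash-action on the infinitely many free factors of~$\overline{N}$ --- is the entire content of the theorem, and you do not carry it out. (As an aside, requiring $A/A_{0}$ to \emph{centralize} $\overline{N}/R$ is stronger than what is needed or achievable; one only needs the resulting quotient of~$P$ to lie in~$\mathcal{C}$.) The paper does not perform this assembly by hand either: it invokes Proposition~\ref{pe801}, the pre-existing criterion for amalgams over a retract of one factor, and the proof consists of verifying that criterion's hypotheses, namely that $A$ and~$B$ are residually $\mathcal{C}$\nobreakdash-groups, that the relevant subgroup is $\mathcal{C}$\nobreakdash-separable \emph{in~$A$}, and that $A$ is $\mathcal{C}$\nobreakdash-\emph{quasiregular} with respect to~$U$. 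The first and second of these come from Theorem~\ref{te204}; the quasiregularity --- which is precisely your ``$A_{0}$\nobreakdash-invariant assembly'' problem in disguise --- is Proposition~\ref{pe603}, proved by the induction on normalizers using Propositions~\ref{pe601}, \ref{pe602}, and~\ref{pe410}. Nothing in your proposal substitutes for that.

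Your closing ``cleaner route'' is indeed the paper's route, but as written it is not a proof: you neither identify the criterion nor verify its hypotheses, and the hypotheses you guess at are partly the wrong ones. Proposition~\ref{pe801} asks nothing about the $\mathcal{C}$\nobreakdash-topology that $B$ induces on~$U$ beyond $B$ being residually a $\mathcal{C}$\nobreakdash-group --- the retraction is exploited inside the cited proof, not as a ``compatibility of topologies'' condition to be checked --- so the diagonal map $b \mapsto (\theta(b), \theta(\rho(b)))$ and the discussion of matching the two induced topologies on~$U$ are beside the point here. Conversely, the conditions that genuinely need checking, quasiregularity of~$A$ with respect to~$U$ and separability on the $A$\nobreakdash-side, are exactly the ones you defer to ``the interplay just described.'' To repair the argument, replace that sentence with explicit appeals to Proposition~\ref{pe603} (applied to~$A$, $U$, and the given homomorphism onto a $\mathcal{C}\mbox{-}\mathcal{BN}_{\mathfrak{P}(\mathcal{C})}$\nobreakdash-group) and to Theorem~\ref{te204}, and then quote Proposition~\ref{pe801}.
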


If $X$ is a~group and~$Y$ is a~normal subgroup of~$X$, then the~restrictions on~$Y$ of~all inner automorphisms of~$X$ compose a~subgroup of~the~automorphism group~$\operatorname{Aut}Y$, which we denote by~$\operatorname{Aut}_{X}(Y)$. Obviously, if $U$ is normal in~$A$ and~$B$, then it is normal in~$P$ and~the~group $\operatorname{Aut}_{P}(U)$ is generated by~its subgroups $\operatorname{Aut}_{A}(U)$ and~$\operatorname{Aut}_{B}(U)$.

\begin{etheorem}\label{te302}
Suppose that $U$ is normal in~$A$ and~$B$, $\operatorname{Aut}_{P}(U)$ is abelian or~coincides with~$\operatorname{Aut}_{A}(U)$ or~$\operatorname{Aut}_{B}(U)$. Suppose also that $A$ and~$B$ are residually $\mathcal{C}\mbox{-}\mathcal{BN}_{\mathfrak{P}(\mathcal{C})}$\nobreakdash-groups and~have homomorphisms onto~$\mathcal{C}\mbox{-}\mathcal{BN}_{\mathfrak{P}(\mathcal{C})}$\nobreakdash-groups that act injectively on~$U$. Then $P$ is residually a~$\mathcal{C}$\nobreakdash-group if and~only if $U$ is $\mathfrak{P}(\mathcal{C})^{\prime}$\nobreakdash-iso\-lat\-ed in~$A$ and~$B$.
\end{etheorem}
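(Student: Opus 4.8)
The plan is to observe that the hypotheses placed on the factors~$A$ and~$B$ are exactly those required by~Theorem~\ref{te204}, and then to reduce the assertion to a~known criterion for~the~root-class residuality of~a~generalized free product with~a~normal amalgamated subgroup. First I~would apply Theorem~\ref{te204} to~the~pairs $(A, U)$ and~$(B, U)$, taking $\mathcal{NC}$ to~be the~class of~all nilpotent $\mathcal{C}$\nobreakdash-groups. Since $A$ is residually a~$\mathcal{C}\mbox{-}\mathcal{BN}_{\mathfrak{P}(\mathcal{C})}$\nobreakdash-group and~admits a~homomorphism onto~such a~group acting injectively on~$U$ (and~likewise for~$B$), the~theorem yields that $A$ and~$B$ are residually $\mathcal{C}$\nobreakdash-groups with~no~$\mathfrak{P}(\mathcal{C})^{\prime}$\nobreakdash-tor\-sion, and~that the~isolators $\mathfrak{P}(\mathcal{C})^{\prime}\mbox{-}\mathfrak{Is}(A, U)$ and~$\mathfrak{P}(\mathcal{C})^{\prime}\mbox{-}\mathfrak{Is}(B, U)$ are $\mathcal{C}$\nobreakdash-sepa\-ra\-ble in~$A$ and~$B$ respectively. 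From~this I~would extract the~equivalence I~need: $U$ is $\mathfrak{P}(\mathcal{C})^{\prime}$\nobreakdash-iso\-lat\-ed in~$A$ if~and~only if it is $\mathcal{C}$\nobreakdash-sepa\-ra\-ble in~$A$ (and~the~same for~$B$). Indeed, if~$U$ is isolated then its isolator equals~$U$, which is therefore $\mathcal{C}$\nobreakdash-sepa\-ra\-ble by~the~above; conversely, a~$\mathcal{C}$\nobreakdash-sepa\-ra\-ble subgroup is $\mathfrak{P}(\mathcal{C})^{\prime}$\nobreakdash-iso\-lat\-ed by~Proposition~\ref{pe405}.

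With~the~factors now known to~be residually $\mathcal{C}$, I~would invoke the~known criterion for~a~generalized free product $P = \langle A * B;\ U \rangle$ with~$U$ normal in~both factors: under~the~hypothesis that $\operatorname{Aut}_{P}(U)$ is abelian or~coincides with~$\operatorname{Aut}_{A}(U)$ or~$\operatorname{Aut}_{B}(U)$, the~group~$P$ is residually a~$\mathcal{C}$\nobreakdash-group if~and~only if $A$ and~$B$ are residually $\mathcal{C}$ and~$U$ is $\mathcal{C}$\nobreakdash-sepa\-ra\-ble in~both $A$ and~$B$. Combining this with~the~preceding paragraph produces the~chain of~equivalences: $P$ is residually $\mathcal{C}$ $\iff$ $U$ is $\mathcal{C}$\nobreakdash-sepa\-ra\-ble in~$A$ and~$B$ $\iff$ $U$ is $\mathfrak{P}(\mathcal{C})^{\prime}$\nobreakdash-iso\-lat\-ed in~$A$ and~$B$, which is precisely the~asserted statement. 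The~residual $\mathcal{C}$\nobreakdash-ness of~$A$ and~$B$ secured in~the~first step is what allows the~middle condition of~the~criterion to~be absorbed, leaving only the~isolation of~$U$ as~the~effective hypothesis.

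I~expect the~main obstacle to~reside in~the~cited criterion rather than in~the~bookkeeping above. Its nontrivial content is that the~automorphism condition on~$\operatorname{Aut}_{P}(U)$ forces the~two $\mathcal{C}$\nobreakdash-topol\-o\-gies induced on~$U$ from~$A$ and~from~$B$ to~be compatible, so that a~common system of~normal subgroups of~finite $\mathcal{C}$\nobreakdash-index can be glued across~the~amalgamation; the~abelianness of~the~conjugation action, or~its coincidence with~that of~a~single factor, is exactly what makes this gluing possible. A~secondary point to~check is that Theorem~\ref{te204} delivers genuine $\mathcal{C}$\nobreakdash-sepa\-ra\-bil\-ity of~$U$ in~the~factors: it provides separability by~the~class~$\mathcal{NC}$ of~nilpotent $\mathcal{C}$\nobreakdash-groups, and~since every such group is a~$\mathcal{C}$\nobreakdash-group this is in~particular $\mathcal{C}$\nobreakdash-sepa\-ra\-bil\-ity, so that the~hypotheses of~the~amalgamation criterion are met verbatim.
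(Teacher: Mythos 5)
Your first step coincides with the paper's: Theorem~\ref{te204} applied to the pairs $(A,U)$ and $(B,U)$ gives that $A$ and~$B$ are residually $\mathcal{C}$\nobreakdash-groups and that $U$ is $\mathcal{C}$\nobreakdash-sepa\-ra\-ble in each factor precisely when it is $\mathfrak{P}(\mathcal{C})^{\prime}$\nobreakdash-iso\-lat\-ed there. The gap lies in the criterion you then invoke. The available result (Proposition~\ref{pe802}, i.e.\ Theorem~1 of the cited paper on normal amalgamations) does \emph{not} say that $P$ is residually a~$\mathcal{C}$\nobreakdash-group iff $A$ and~$B$ are residually $\mathcal{C}$\nobreakdash-groups and $U$ is $\mathcal{C}$\nobreakdash-sepa\-ra\-ble in both. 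Its first condition is the compatibility requirement $\bigcap_{(R,S)\in\Omega}R=1=\bigcap_{(R,S)\in\Omega}S$, where $\Omega$ consists only of pairs $R\in\mathcal{C}^{*}(A)$, $S\in\mathcal{C}^{*}(B)$ with $R\cap U=S\cap U$. This is genuinely stronger than residual $\mathcal{C}$\nobreakdash-ness of the factors: there may simply be too few compatible pairs. You acknowledge the compatibility issue in your last paragraph but place it inside the proof of the cited criterion, whereas it is a hypothesis of that criterion which the present theorem must verify; the automorphism condition on $\operatorname{Aut}_{P}(U)$ does not do that gluing for you.

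The missing argument, which is the real content of the paper's proof, runs as follows. Given $L\in\mathcal{C}^{*}(A)$ and $M\in\mathcal{C}^{*}(B)$, one checks that $L\cap M\cap U\in\mathcal{C}^{*}(U)$ (Proposition~\ref{pe406}), lets $q$ be the finite exponent of $U/(L\cap M\cap U)$ (Proposition~\ref{pe402}), and sets $N=U^{q}$, a subgroup normal in both $A$ and~$B$ with $U/N\in\mathcal{C}$ (Propositions~\ref{pe502} and~\ref{pe601}). Since $A$ and~$B$ are $\mathcal{C}$\nobreakdash-regular with respect to~$U$ by Proposition~\ref{pe603} (this is where the hypothesis of a homomorphism onto a $\mathcal{C}\mbox{-}\mathcal{BN}_{\mathfrak{P}(\mathcal{C})}$\nobreakdash-group injective on~$U$ is used a second time), there exist $V\in\mathcal{C}^{*}(A)$, $W\in\mathcal{C}^{*}(B)$ with $V\cap U=N=W\cap U$; then $R=V\cap L$ and $S=W\cap M$ form a compatible pair inside $L$ and~$M$, so condition~1 of Proposition~\ref{pe802} follows from the residual $\mathcal{C}$\nobreakdash-ness of $A$ and~$B$. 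You would also need to invoke Proposition~\ref{pe807} to discharge the requirement in Proposition~\ref{pe802} that $\mathcal{C}$ be closed under taking quotient groups, which your proposal omits. Without these two ingredients the reduction does not go through.
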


\begin{etheorem}\label{te303}
Suppose that $K_{0}\kern-1.5pt{} =\kern-1.5pt{} G$\kern-1pt{}, $H_{1}\kern-1.5pt{} =\kern-1.5pt{} H$\kern-1.5pt{}, $K_{1}\kern-1.5pt{} =\kern-1.5pt{} K$\kern-1pt{}, and~$H_{i+1}\kern-1.5pt{} =\kern-1.5pt{} H_{i} \cap K_{i}$,~$K_{i+1}\kern-1.5pt{} =\nolinebreak\kern-1.5pt{} H_{i+1}\varphi$ for~all $i \geqslant 1$. Suppose also that $H$ and~$K$ lie in~the~center of~$G$ and~there exists $m \geqslant\nolinebreak 1$ such that $H_{m}$ and~$K_{m}$ are finitely generated. If~$G$ is residually a~$\mathcal{C}\mbox{-}\mathcal{BN}_{\mathfrak{P}(\mathcal{C})}$\nobreakdash-group and~has a~homomorphism~$\sigma$ onto~a~$\mathcal{C}\mbox{-}\mathcal{BN}_{\mathfrak{P}(\mathcal{C})}$\nobreakdash-group that acts injectively on~$HK$, then $G^{*}$ is residually a~$\mathcal{C}$\nobreakdash-group if and~only~if

\textup{1)}\hspace{1ex}$H_{n} = K_{n}$ for~some $n > m$;

\textup{2)}\hspace{1ex}$H$ and~$K$ are $\mathfrak{P}(\mathcal{C})^{\prime}$\nobreakdash-iso\-lat\-ed~in~$G$;

\textup{3)}\hspace{1ex}$\bigcap_{N \in \Omega} N = 1$, where $\Omega$ is the~family of~subgroups of~$H_{n}$ defined as~follows: $N \in \Omega$ if and~only if $H_{n}/N$ is a~finite $\mathfrak{P}(\mathcal{C})$\nobreakdash-group, $N\varphi = N$, and~the~automorphism of~$H_{n}/N$ induced by~$\varphi$ has the~order which is a~$\mathfrak{P}(\mathcal{C})$\nobreakdash-num\-ber.
\end{etheorem}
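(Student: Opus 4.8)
The plan is to prove the two implications separately, using Theorem~\ref{te204} as the sole source of information about $G$ and its subgroups, and a general criterion for the $\mathcal{C}$\nobreakdash-residuality of HNN\nobreakdash-extensions (see~\cite{Tumanova2017SMJ} and~Proposition~\ref{pe807}) to convert facts about $G$, $H$, $K$ into a conclusion about~$G^{*}$. First I would record the consequences of the standing hypotheses. Since $H, K \subseteq Z(G)$, both subgroups are abelian and normal in~$G$, and every term of the chain satisfies $H_{i} \subseteq H$ and $K_{i} \subseteq K$, so all $H_{i}, K_{i}$ lie inside~$HK$ and $\sigma$ acts injectively on each of them. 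Applying Theorem~\ref{te204} to $G$ (with $Y = HK$, $Y = H$, $Y = K$ in turn) yields that $G$ has no~$\mathfrak{P}(\mathcal{C})^{\prime}$\nobreakdash-torsion, is residually an~$\mathcal{NC}$\nobreakdash-group where $\mathcal{NC}$ is the class of all nilpotent $\mathcal{C}$\nobreakdash-groups, and that each isolator $\mathfrak{P}(\mathcal{C})^{\prime}\mbox{-}\mathfrak{Is}(G, Y)$ is $\mathcal{NC}$\nobreakdash-separable in~$G$ and equals the corresponding root set. In particular $G$ is residually a~$\mathcal{C}$\nobreakdash-group.

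For the sufficiency direction I would assume \textup{1)}--\textup{3)}. Condition~\textup{2)} says $H$ and~$K$ are $\mathfrak{P}(\mathcal{C})^{\prime}$\nobreakdash-iso\-lat\-ed, so $H = \mathfrak{P}(\mathcal{C})^{\prime}\mbox{-}\mathfrak{Is}(G, H)$ and $K = \mathfrak{P}(\mathcal{C})^{\prime}\mbox{-}\mathfrak{Is}(G, K)$; by the previous paragraph these subgroups are $\mathcal{NC}$\nobreakdash-separable, hence $\mathcal{C}$\nobreakdash-separable, in~$G$. Condition~\textup{1)} is the structural hinge: from $H_{n} = K_{n} = H_{n}\varphi$ it follows that $\varphi$ restricts to an automorphism~$\psi$ of the abelian group $L = H_{n}$, that the chain stabilizes ($H_{n} = H_{n+1} = \cdots$), and that $\langle L, t \rangle \cong L \rtimes_{\psi} \mathbb{Z}$. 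Condition~\textup{3)}, the equality $\bigcap_{N \in \Omega} N = 1$, is exactly the statement that $L$ admits enough $\varphi$\nobreakdash-invariant finite $\mathfrak{P}(\mathcal{C})$\nobreakdash-quotients on which $\psi$ induces an automorphism of $\mathfrak{P}(\mathcal{C})$\nobreakdash-order, which is the $\varphi$\nobreakdash-compatibility hypothesis required by the HNN criterion. Feeding the residual $\mathcal{C}$\nobreakdash-ness of~$G$, the $\mathcal{C}$\nobreakdash-separability of~$H$ and~$K$, and conditions~\textup{1)} and~\textup{3)} into that criterion gives that $G^{*}$ is residually a~$\mathcal{C}$\nobreakdash-group.

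For the necessity direction I would assume $G^{*}$ residually a~$\mathcal{C}$\nobreakdash-group and recover each condition. The relation $t^{-1}Ht = K$ forces the images of $H$ and~$K$ under any $\mathcal{C}$\nobreakdash-quotient to be conjugate inside a periodic group whose element orders are $\mathfrak{P}(\mathcal{C})$\nobreakdash-numbers; combining this with the separability of elements of~$G$ and the root-extraction behaviour in such groups yields that $H$ and~$K$ must be $\mathfrak{P}(\mathcal{C})^{\prime}$\nobreakdash-iso\-lat\-ed, which is~\textup{2)}. For~\textup{1)} I would use that, once $H_{m}, K_{m}$ are finitely generated, all later $H_{i}, K_{i}$ are finitely generated abelian subgroups of~$H_{m}$; analysing Britton normal forms of the $t$\nobreakdash-conjugates of elements detected by the $H_{i}$ shows that, unless $H_{n} = K_{n}$ at some stage, the nested overlaps produce elements that survive no $\mathcal{C}$\nobreakdash-quotient, so stabilization at some $n > m$ is forced. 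Finally, restricting to the subgroup $\langle H_{n}, t \rangle \cong L \rtimes_{\psi} \mathbb{Z}$ and using that residual $\mathcal{C}$\nobreakdash-ness passes to subgroups gives exactly~\textup{3)}.

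The main obstacle is the sufficiency step of assembling homomorphisms: given a nontrivial element of~$G^{*}$, one must build a single homomorphism onto a $\mathcal{C}$\nobreakdash-group that separates it, which requires extending an $\mathcal{NC}$\nobreakdash-quotient of~$G$ coming from Theorem~\ref{te204} across the stable letter~$t$ in a way compatible with $t^{-1}Ht = K$, and simultaneously incorporating a $\varphi$\nobreakdash-invariant finite $\mathfrak{P}(\mathcal{C})$\nobreakdash-quotient of~$L$ supplied by~\textup{3)}. The stabilization provided by~\textup{1)} is precisely what makes such a compatible extension possible, so the technical heart is verifying that the $\mathcal{C}$\nobreakdash-topologies induced on~$H$ and~$K$ are matched by~$\varphi$; the necessity of~\textup{1)} and the precise use of the finite generation at level~$m$ are the other delicate points.
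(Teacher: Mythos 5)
Your overall strategy --- combine Theorem~\ref{te204} with a ready-made residuality criterion for HNN-extensions with central associated subgroups --- is the right one, but the plan as written has a genuine gap in both directions. The criterion that actually does the job is Proposition~\ref{pe803} (Theorem~3 of~\cite{Sokolov2022CA}), not the Baumslag--Solitar result of~\cite{Tumanova2017SMJ}, and its hypotheses are stronger than what you verify: besides $G$ being residually a~$\mathcal{C}$\nobreakdash-group and $H$,~$K$ being proper central subgroups, it requires that \emph{for every}~$i$ the group~$K_{i}$ be $\mathcal{C}$\nobreakdash-regular with respect to~$H_{i+1}K_{i+1}$ and that $\mathfrak{P}(\mathcal{C})^{\prime}\mbox{-}\mathfrak{Is}(K_{i}, H_{i+1}K_{i+1})$ be $\mathcal{C}$\nobreakdash-separable \emph{in~$K_{i}$}, not in~$G$. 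You never mention $\mathcal{C}$\nobreakdash-regularity, and you only produce separability of isolators in~$G$; the paper obtains the level-$i$ statements from Proposition~\ref{pe603} and Theorem~\ref{te204} applied to~$K_{i}$ with the restriction of~$\sigma$, using $H_{i}, K_{i} \leqslant HK$ and Proposition~\ref{pe502}. Without these inputs, your self-identified ``main obstacle'' --- extending a quotient of~$G$ across the stable letter~$t$ --- is exactly the unproved content of the theorem, and your plan offers no way to discharge it short of reproving Proposition~\ref{pe803}.

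The second gap is the bridge between condition~3) and the criterion. Proposition~\ref{pe803} speaks of the subgroup $E = \operatorname{sgp}\{H_{n}, t\}$ being residually a~$\mathcal{C}$\nobreakdash-group, whereas condition~3) is an intersection condition over finite $\mathfrak{P}(\mathcal{C})$\nobreakdash-quotients of~$H_{n}$; the translation goes through Proposition~\ref{pe808} and requires knowing that $\mathcal{C}^{*}(H_{n})$ consists precisely of the subgroups with finite $\mathfrak{P}(\mathcal{C})$\nobreakdash-quotient, which is where the finite generation of $H_{m}$ and~$K_{m}$ (together with Propositions~\ref{pe401} and~\ref{pe402}) actually enters --- a point you flag as ``delicate'' but do not resolve. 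Likewise, for the necessity of stabilization you propose a Britton-normal-form analysis from scratch; this is the content of Proposition~\ref{pe809}, and note that it yields only $H_{n} = H_{n+1}$ for some~$n$, while the equality $H_{n} = K_{n}$ in condition~1) is part of the conclusion of Proposition~\ref{pe803} itself, so the two should not be conflated.
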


If $H$ and~$K$ are infinite cyclic subgroups, Theorem~\ref{te303} acquires the~following, simpler formulation.

\begin{etheorem}\label{te304}
Suppose that $H$ and~$K$ are infinite cyclic subgroups lying in~the~center of~$G$. Suppose also that $G$ is residually a~$\mathcal{C}\mbox{-}\mathcal{BN}_{\mathfrak{P}(\mathcal{C})}$\nobreakdash-group and~has a~homomorphism onto~a~$\mathcal{C}\mbox{-}\mathcal{BN}_{\mathfrak{P}(\mathcal{C})}$\nobreakdash-group that acts injectively on~$HK$. Then $G^{*}$ is residually a~$\mathcal{C}$\nobreakdash-group if and~only~if

\textup{1)}\hspace{1ex}$H/H \cap K$ and~$K/H \cap K$ are of~the~same order;

\textup{2)}\hspace{1ex}$H$ and~$K$ are $\mathfrak{P}(\mathcal{C})^{\prime}$\nobreakdash-iso\-lat\-ed~in~$G$;

\textup{3)}\hspace{1ex}$2 \in \mathfrak{P}(\mathcal{C})$, unless $H \cap K$ lies in~the~center of~$G^{*}$.
\end{etheorem}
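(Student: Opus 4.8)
The plan is to deduce Theorem~\ref{te304} directly from~Theorem~\ref{te303} by~specializing to the~case of~infinite cyclic central subgroups. Since $H = H_{1}$ and~$K = K_{1}$ are infinite cyclic, they are finitely generated, so the~requirement in~Theorem~\ref{te303} that $H_{m}$ and~$K_{m}$ be finitely generated holds with~$m = 1$; all remaining hypotheses of~Theorem~\ref{te303} are literally among those of~Theorem~\ref{te304}. Hence it suffices to~show that, under these hypotheses, conditions~1)--3) of~Theorem~\ref{te303} are equivalent to~conditions~1)--3) of~Theorem~\ref{te304}. Condition~2) is identical in~both statements, so only conditions~1) and~3) need attention. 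Throughout, I~would write $H = \langle h \rangle$, $K = \langle k \rangle$ and~assume $h\varphi = k$ after~replacing~$k$ by~its inverse if~necessary.

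First I~would analyse the~tower $H_{i}, K_{i}$. Because $H$ and~$K$ lie in~the~center of~$G$, the~subgroup~$HK$ is abelian. If~$H \cap K \ne 1$, then $H$ and~$K$ have finite index in~$HK$, so $HK$ is infinite cyclic; writing $HK = \langle c \rangle$, $h = c^{s}$, $k = c^{u}$ and~comparing indices shows that $[H : H \cap K] = [K : H \cap K]$ holds if and~only if $s = u$, i.e.,~if and~only if $H = K$. If~$H \cap K = 1$, then $H_{2} = H \cap K = 1$ and~hence $H_{n} = K_{n} = 1$ for~all $n \geqslant 2$. In~the~cyclic case a~short induction, based on~the~relations $H_{i} = \langle c^{\alpha_{i}} \rangle$, $K_{i} = \langle c^{\beta_{i}} \rangle$ with~$\beta_{i} = (u/s)\alpha_{i}$ and~$\alpha_{i+1} = \operatorname{lcm}(\alpha_{i}, \beta_{i})$, shows that $H_{n} = K_{n}$ can occur for~some $n \geqslant 2$ only when $u = s$. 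Combining the~two cases, $H_{n} = K_{n}$ for~some $n > 1$ if and~only if $H = K$ or~$H \cap K = 1$; since condition~1) of~Theorem~\ref{te304} likewise holds precisely when $H = K$ or~$H \cap K = 1$, the~two versions of~condition~1) are equivalent.

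Assuming condition~1), I~would then split into~two cases to~treat condition~3). If~$H \cap K = 1$, then $H_{n} = 1$ for~$n \geqslant 2$, the~family~$\Omega$ contains the~trivial subgroup, so $\bigcap_{N \in \Omega} N = 1$ and~condition~3) of~Theorem~\ref{te303} holds automatically; since $H \cap K = 1$ lies in~$Z(G^{*})$, condition~3) of~Theorem~\ref{te304} is vacuous. If~$H = K$, then $H_{n} = K_{n} = H$ for~all $n \geqslant 2$ and~$\varphi$ restricts to~an~automorphism of~the~infinite cyclic group~$H$, hence is either the~identity or~inversion. The~key observation here is that $H \cap K = H$ lies in~$Z(G^{*})$ if and~only if $t$ commutes with~$H$, i.e.,~if and~only if $h\varphi = h$; thus the~identity case corresponds to~$H \cap K \leqslant Z(G^{*})$ and~the~inversion case to~$H \cap K \not\leqslant Z(G^{*})$. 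When $\varphi$ is the~identity, every induced automorphism of~$H/N$ is trivial, so $\Omega$ consists of~all~$N$ with~$H/N$ a~finite $\mathfrak{P}(\mathcal{C})$\nobreakdash-group, and~$\bigcap_{N \in \Omega} N = 1$ because $\mathfrak{P}(\mathcal{C}) \ne \varnothing$; this matches the~vacuous condition~3) of~Theorem~\ref{te304}. When $\varphi$ is inversion, the~automorphism induced on~$H/N$ has order~$1$ or~$2$, so $N \in \Omega$ forces this order to~be a~$\mathfrak{P}(\mathcal{C})$\nobreakdash-num\-ber; one~then checks that $\bigcap_{N \in \Omega} N = 1$ if and~only if $2 \in \mathfrak{P}(\mathcal{C})$, which is exactly condition~3) of~Theorem~\ref{te304}.

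The~main obstacle I~anticipate lies in~the~two structural reductions: establishing that equality of~the~indices $[H : H \cap K]$ and~$[K : H \cap K]$ forces $H = K$ and~matching this with~the~stabilization $H_{n} = K_{n}$ of~the~tower, and~the~automorphism-order analysis in~condition~3), where I~must correctly identify the~inversion case with~the~failure of~$H \cap K$ to~be central in~$G^{*}$ and~then link the~resulting order\nobreakdash-$2$ obstruction to~the~membership $2 \in \mathfrak{P}(\mathcal{C})$. Once these two points are settled, the~equivalence of~the~two sets of~conditions is immediate, and~Theorem~\ref{te304} follows from~Theorem~\ref{te303}.
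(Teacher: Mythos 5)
Your overall strategy --- specializing Theorem~\ref{te303} to the cyclic case and matching the two sets of conditions --- is viable and genuinely different from the paper's proof (the paper treats $H \cap K = 1$ as a special case of Theorem~\ref{te306} and reduces the case $H \cap K \ne 1$ to Proposition~\ref{pe804} via Theorem~\ref{te204} and Proposition~\ref{pe807}). However, your second paragraph contains a step that fails: from $H \cap K \ne 1$ you conclude that $HK$ is infinite cyclic. This does not follow. The hypotheses only force $HK$ to embed into a $\mathfrak{P}(\mathcal{C})^{\prime}$\nobreakdash-tor\-sion-free group, so $HK$ may contain $\mathfrak{P}(\mathcal{C})$\nobreakdash-tor\-sion; being abelian and virtually infinite cyclic, it has the form $\mathbb{Z} \times F$ with $F$ finite, and $F$ need not be trivial. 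Concretely, take $p \in \mathfrak{P}(\mathcal{C})$, $G = \langle c \rangle \times \langle f \rangle \cong \mathbb{Z} \times \mathbb{Z}/p$ (a $\mathcal{C}\mbox{-}\mathcal{BN}_{\mathfrak{P}(\mathcal{C})}$\nobreakdash-group), $H = \langle c \rangle$, $K = \langle cf \rangle$. Then $H \cap K = \langle c^{p} \rangle$ and $[H : H \cap K] = [K : H \cap K] = p$, yet $H \ne K$. So your claim that condition~1) of Theorem~\ref{te304} is equivalent to ``$H = K$ or $H \cap K = 1$'' is false, and your case analysis for condition~3), which covers only these two alternatives, is incomplete.

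The gap is repairable within your framework. Working inside the infinite cyclic group $K$ one finds $H_{2} = H \cap K = \bigl\langle k^{[K : H \cap K]} \bigr\rangle$ and $K_{2} = H_{2}\varphi = \bigl\langle k^{[H : H \cap K]} \bigr\rangle$, and your constant-ratio recursion then shows that $H_{n} = K_{n}$ for some $n \geqslant 2$ if and only if $[H : H \cap K] = [K : H \cap K]$ --- which is exactly condition~1) of Theorem~\ref{te304}, with no reduction to $H = K$ needed. When this index equality holds and $H \cap K \ne 1$, one gets $H_{n} = H \cap K$ for all $n \geqslant 2$, and $\varphi$ restricts to an automorphism of the infinite cyclic group $H \cap K$, hence acts on it as the identity or as inversion; your analysis of the family~$\Omega$ and of the centrality of $H \cap K$ in $G^{*}$ should therefore be carried out for $\varphi$ restricted to $H \cap K$ rather than for $\varphi$ on all of $H$. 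With that correction the deduction of Theorem~\ref{te304} from Theorem~\ref{te303} goes through.
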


In~what follows, it is assumed that $\Gamma$ is a~non-empty undirected connected graph with~a~vertex set~$\mathcal{V}$ and~an~edge set~$\mathcal{E}$ (loops and~multiple edges are allowed). Let us construct an~oriented graph of~groups~$\mathcal{G}(\Gamma)$ over~$\Gamma$. To~do this, we first choose arbitrarily the~directions for~all edges of~$\Gamma$ and,~for~every edge $e \in \mathcal{E}$, denote by~$e(1)$ and~$e(-1)$ the~vertices that are the~ends of~$e$. Then we assign to~each vertex $v \in \mathcal{V}$ some group~$G_{v}$ and~to~each edge $e \in \mathcal{E}$ a~group~$H_{e}$ and~injective homomorphisms $\varphi_{+e}\colon H_{e} \to G_{e(1)}$, $\varphi_{-e}\colon H_{e} \to G_{e(-1)}$. The~result is the~graph of~groups
$$
\mathcal{G}(\Gamma) = \big(\Gamma,\ G_{v}\ (v \in \mathcal{V}),\ H_{e}\ (e \in \mathcal{E}),\ \varphi_{\varepsilon e}\ (e \in \mathcal{E},\ \varepsilon = \pm 1)\big).
$$

The~following two theorems deal with~the~fundamental group~$\pi_{1}(\mathcal{G}(\Gamma))$ of~$\mathcal{G}(\Gamma)$, the~definition of~which can be found in~\cite{Serre1980}. Recall that if the~graph~$\Gamma$ is a~tree, then $\pi_{1}(\mathcal{G}(\Gamma))$ is said to~be the~\emph{tree product} of~the~groups~$G_{v}$ ($v \in \mathcal{V}$)~\cite{KarrasSolitar1970TAMS}.

\begin{etheorem}\label{te305}
Suppose that $\Gamma$ is a~finite tree and~$H_{e}\varphi_{\varepsilon e}$ is a~proper central subgroup of~$G_{e(\varepsilon)}$ for~all $e \in \mathcal{E}$, $\varepsilon = \pm 1$. Suppose also that, for~each $v \in \mathcal{V}$, $G_{v}$~is residually a~$\mathcal{C}\mbox{-}\mathcal{BN}_{\mathfrak{P}(\mathcal{C})}$\nobreakdash-group and~has a~homomorphism onto~a~$\mathcal{C}\mbox{-}\mathcal{BN}_{\mathfrak{P}(\mathcal{C})}$\nobreakdash-group that acts injectively on~all the~subgroups~$H_{e}\varphi_{\varepsilon e}$ \textup{(}$e \in \mathcal{E}$, $\varepsilon = \pm 1$, $v = e(\varepsilon)$\textup{)}. Then $\pi_{1}(\mathcal{G}(\Gamma))$ is residually a~$\mathcal{C}$\nobreakdash-group if and~only if, for~any $e \in \mathcal{E}$, $\varepsilon = \pm 1$, $H_{e}\varphi_{\varepsilon e}$ is $\mathfrak{P}(\mathcal{C})^{\prime}$\nobreakdash-iso\-lat\-ed in~$G_{e(\varepsilon)}$.
\end{etheorem}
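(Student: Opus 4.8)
The plan is to reduce Theorem~\ref{te305} to a criterion for the root-class residuality of tree products whose edge subgroups are central in the incident vertex groups, and to supply the separability hypothesis of that criterion from Theorem~\ref{te204}. For each vertex $v$, fix the given homomorphism $\sigma_{v}$ of $G_{v}$ onto a $\mathcal{C}\mbox{-}\mathcal{BN}_{\mathfrak{P}(\mathcal{C})}$-group that acts injectively on every edge subgroup $H_{e}\varphi_{\varepsilon e}$ with $v = e(\varepsilon)$. Taking $\mathcal{NC}$ to be the class of all nilpotent $\mathcal{C}$-groups and applying Theorem~\ref{te204} to $G_{v}$ and $Y = H_{e}\varphi_{\varepsilon e}$, I conclude that $\mathfrak{P}(\mathcal{C})^{\prime}\mbox{-}\mathfrak{Is}(G_{v}, H_{e}\varphi_{\varepsilon e})$ is $\mathcal{NC}$-separable, hence $\mathcal{C}$-separable, in $G_{v}$. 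Consequently, \emph{if} $H_{e}\varphi_{\varepsilon e}$ is $\mathfrak{P}(\mathcal{C})^{\prime}$-isolated in $G_{e(\varepsilon)}$, then it coincides with its own isolator and is therefore $\mathcal{C}$-separable in $G_{e(\varepsilon)}$. Moreover each $G_{v}$, being residually a $\mathcal{C}\mbox{-}\mathcal{BN}_{\mathfrak{P}(\mathcal{C})}$-group and thus, by Theorem~\ref{te202}, residually a $\mathcal{C}$-group, is itself residually a $\mathcal{C}$-group.

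For the sufficiency, assume every $H_{e}\varphi_{\varepsilon e}$ is $\mathfrak{P}(\mathcal{C})^{\prime}$-isolated in $G_{e(\varepsilon)}$. By the previous paragraph all vertex groups are residually $\mathcal{C}$-groups and all edge subgroups, which are central in their vertex groups, are $\mathcal{C}$-separable there. I would then invoke the criterion for the root-class residuality of a tree product with central edge subgroups to conclude that $\pi_{1}(\mathcal{G}(\Gamma))$ is residually a $\mathcal{C}$-group. I expect this step to be the main obstacle: Theorems~\ref{te301} and~\ref{te302} do not apply directly, because when one peels off a pendant vertex $v_{0}$ with edge $e_{0}$ and writes $\pi_{1}(\mathcal{G}(\Gamma)) = \pi_{1}(\mathcal{G}(\Gamma^{\prime})) *_{H_{e_{0}}} G_{v_{0}}$, the image of $H_{e_{0}}$ in the accumulated factor $\pi_{1}(\mathcal{G}(\Gamma^{\prime}))$ is central only in a single vertex group and need be neither a retract of $G_{v_{0}}$ nor normal in $\pi_{1}(\mathcal{G}(\Gamma^{\prime}))$, so the inductive hypotheses of those theorems cannot be maintained. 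Controlling the global interaction of the amalgamations along the whole tree is precisely what the cited tree-product criterion provides; the remaining work is to verify its $\mathcal{C}$-separability hypothesis, which the reduction above furnishes through Theorem~\ref{te204}.

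For the necessity, suppose $\pi_{1}(\mathcal{G}(\Gamma))$ is residually a $\mathcal{C}$-group, fix an edge $e$ with endpoints $e(1), e(-1)$, and write $U_{\varepsilon} = H_{e}\varphi_{\varepsilon e} \le G_{e(\varepsilon)}$, the two subgroups $U_{1}$ and $U_{-1}$ being identified in $\pi_{1}(\mathcal{G}(\Gamma))$. To show $U_{1}$ is $\mathfrak{P}(\mathcal{C})^{\prime}$-isolated in $G_{e(1)}$, take $g \in G_{e(1)}$ and a $\mathfrak{P}(\mathcal{C})^{\prime}$-number $q$ with $g^{q} \in U_{1}$, and put $u = g^{q}$. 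As $U_{-1}$ is central in $G_{e(-1)}$ and $u$ is identified with an element of $U_{-1}$, the element $u$ commutes with every $b \in G_{e(-1)}$. In any homomorphism $\phi$ of $\pi_{1}(\mathcal{G}(\Gamma))$ onto a $\mathcal{C}$-group the order of $\phi(g)$ is a $\mathfrak{P}(\mathcal{C})$-number and hence coprime to $q$, so $\phi(g)$ is a power of $\phi(g^{q}) = \phi(u)$; since $\phi(u)$ commutes with $\phi(b)$, so does $\phi(g)$, giving $\phi([g, b]) = 1$. Residual $\mathcal{C}$-ness then forces $[g, b] = 1$ in $\pi_{1}(\mathcal{G}(\Gamma))$ for every $b \in G_{e(-1)}$. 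The amalgam $G_{e(1)} *_{H_{e}} G_{e(-1)}$ embeds into $\pi_{1}(\mathcal{G}(\Gamma))$ as the fundamental group of the subtree spanned by $e$, so by the normal form theorem an element of $G_{e(1)}$ commuting with some $b \in G_{e(-1)} \setminus U_{-1}$ (such $b$ exists since $U_{-1}$ is a proper subgroup) must lie in $U_{1}$; hence $g \in U_{1}$. By symmetry $U_{-1}$ is isolated in $G_{e(-1)}$, which completes the argument.
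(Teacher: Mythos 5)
Your overall plan for the sufficiency direction is the paper's plan: reduce to a criterion for the root-class residuality of $\pi_{1}$ of a graph of groups with central edge subgroups and feed it the $\mathcal{C}$\nobreakdash-sepa\-ra\-bil\-ity of the edge subgroups obtained from Theorem~\ref{te204}. The criterion you are hoping for does exist (it is Proposition~\ref{pe806}, quoted from an earlier paper of the author), so the step you flag as ``the main obstacle'' is resolved by citation rather than by a new argument. But there is a genuine gap in how you propose to apply it: that criterion does not run on residual $\mathcal{C}$\nobreakdash-ness of the vertex groups and $\mathcal{C}$\nobreakdash-sepa\-ra\-bil\-ity of the edge subgroups alone. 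Its hypotheses also require each $G_{e(\varepsilon)}$ to be \emph{$\mathcal{C}$\nobreakdash-regular} with respect to $H_{e}\varphi_{\varepsilon e}$, and this is a substantive condition, not a formality --- establishing it for $\mathcal{C}\mbox{-}\mathcal{BN}$\nobreakdash-groups is the whole point of Proposition~\ref{pe603}, which uses precisely the hypothesized homomorphism onto a $\mathcal{C}\mbox{-}\mathcal{BN}_{\mathfrak{P}(\mathcal{C})}$\nobreakdash-group acting injectively on the edge subgroups. Your write-up says the ``remaining work is to verify its $\mathcal{C}$\nobreakdash-separability hypothesis,'' which understates what is left: without the regularity input the reduction does not close. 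A second, more technical omission: Proposition~\ref{pe806} assumes $\mathcal{C}$ is closed under taking quotient groups, which Theorem~\ref{te305} does not; the paper bridges this with Proposition~\ref{pe807}, and some such device is needed in your argument as well.

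Your necessity argument, by contrast, is correct and is genuinely different from the paper's (which extracts isolatedness from the ``only if'' half of Proposition~\ref{pe806} together with Proposition~\ref{pe405}). Using the coprimality of $q$ with the element orders in a $\mathcal{C}$\nobreakdash-group to deduce $[g,b]=1$ from residual $\mathcal{C}$\nobreakdash-ness, and then the length-four reduced word $g^{-1}b^{-1}gb$ in $G_{e(1)} *_{H_{e}} G_{e(-1)}$ with $b \notin H_{e}\varphi_{-e}$ (which exists by properness), is a clean self-contained proof that needs only the normal form theorem and the embedding of the edge amalgam into the tree product. This half of your proposal could stand on its own; the sufficiency half cannot until the $\mathcal{C}$\nobreakdash-regularity of the vertex groups with respect to their edge subgroups is supplied.
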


We say that $\mathcal{G}(\Gamma)$ is a~\emph{graph of~groups with~central trivially intersecting edge subgroups} if, for~each $v \in \mathcal{V}$, the~subgroup
$$
H_{v} = \operatorname{sgp}\big\{H_{e}\varphi_{\varepsilon e} \mid e \in \mathcal{E},\ \varepsilon = \pm 1,\ v = e(\varepsilon)\big\}
$$
lies in~the~center of~$G_{v}$ and~is the~direct product of~the~subgroups generating~it.

\begin{etheorem}\label{te306}
Suppose that $\mathcal{G}(\Gamma)$ is a~graph of~groups with~central trivially intersecting edge subgroups. Suppose also that, for~each $v \in \mathcal{V}$, $G_{v}$~is residually a~$\mathcal{C}\mbox{-}\mathcal{BN}_{\mathfrak{P}(\mathcal{C})}$\nobreakdash-group and~has a~homomorphism onto~a~$\mathcal{C}\mbox{-}\mathcal{BN}_{\mathfrak{P}(\mathcal{C})}$\nobreakdash-group that acts injectively on~$H_{v}$. Then the~following statements hold.

\textup{1.}\hspace{1ex}If, for~each $v \in \mathcal{V}$, $H_{v}$~is $\mathfrak{P}(\mathcal{C})^{\prime}$\nobreakdash-iso\-lat\-ed in~$G_{v}$, then $\pi_{1}(\mathcal{G}(\Gamma))$ is residually a~$\mathcal{C}$\nobreakdash-group.

\textup{2.}\hspace{1ex}Suppose that $\mathcal{G}(\Gamma)$ is finite and~$H_{e}\varphi_{\varepsilon e} \ne G_{e(\varepsilon)}$ for~all $e \in \mathcal{E}$, $\varepsilon = \pm 1$. Then $\pi_{1}(\mathcal{G}(\Gamma))$ is residually a~$\mathcal{C}$\nobreakdash-group if and~only if, for~any $e \in \mathcal{E}$, $\varepsilon = \pm 1$, $H_{e}\varphi_{\varepsilon e}$~is $\mathfrak{P}(\mathcal{C})^{\prime}$\nobreakdash-iso\-lat\-ed~in~$G_{e(\varepsilon)}$.
\end{etheorem}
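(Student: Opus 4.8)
The plan is to extract vertex-level separability from Theorem~\ref{te204}, feed it into a general residual-$\mathcal{C}$ criterion for fundamental groups of graphs of groups for the sufficiency directions, and reduce the necessity direction of the second assertion to one-edge subgraphs handled by Theorems~\ref{te302}--\ref{te304}. First I would apply Theorem~\ref{te204} at each vertex with $X = G_{v}$ and $Y = H_{v}$, the two hypotheses on $G_{v}$ being exactly those the theorem requires. This gives that every $G_{v}$ is residually a $\mathcal{C}$\nobreakdash-group with no $\mathfrak{P}(\mathcal{C})^{\prime}$\nobreakdash-torsion, that $\mathfrak{P}(\mathcal{C})^{\prime}\mbox{-}\mathfrak{Is}(G_{v}, H_{v})$ is $\mathcal{C}$\nobreakdash-separable in $G_{v}$ and lies in $\mathcal{C}\mbox{-}\mathcal{BN}_{\mathfrak{P}(\mathcal{C})}$, and that the given homomorphism stays injective on this isolator. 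Being central, $H_{v}$ is abelian; when it is isolated it coincides with its isolator and so lies in $\mathcal{C}\mbox{-}\mathcal{BN}_{\mathfrak{P}(\mathcal{C})}$, whence Theorem~\ref{te201} identifies its $\mathcal{C}$\nobreakdash-topology with the full $\mathfrak{P}(\mathcal{C})$\nobreakdash-profinite topology.

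For the first assertion I would invoke the general sufficient condition for $\pi_{1}(\mathcal{G}(\Gamma))$ to be residually a $\mathcal{C}$\nobreakdash-group, namely that the vertex groups be residually $\mathcal{C}$, that each $H_{e}\varphi_{\varepsilon e}$ be $\mathcal{C}$\nobreakdash-separable in $G_{e(\varepsilon)}$, and that the edge isomorphisms respect the induced $\mathcal{C}$\nobreakdash-topologies. The first is already in hand. For the second, since $G_{v}$ has no $\mathfrak{P}(\mathcal{C})^{\prime}$\nobreakdash-torsion and $H_{v}$ is the central direct product of the $H_{e}\varphi_{\varepsilon e}$, isolation of $H_{v}$ forces isolation of each factor (split a $\mathfrak{P}(\mathcal{C})^{\prime}$\nobreakdash-root along the direct product and kill the off-factor coordinates by torsion-freeness), and a further use of Theorem~\ref{te204} with $Y = H_{e}\varphi_{\varepsilon e} \subseteq H_{v}$ delivers the separability. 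For the third, centrality trivialises every conjugation action and the trivial-intersection condition decouples distinct edges, so compatibility reduces to the isomorphism $\varphi_{+e}^{-1}\varphi_{-e}$ between two central abelian subgroups, each carrying its canonical $\mathfrak{P}(\mathcal{C})$\nobreakdash-profinite topology, being a homeomorphism, which is automatic for any group isomorphism.

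For the necessity direction of the second assertion I would fix an edge $e$ and pass to the subgraph of groups on $e$ and its endpoint(s); its fundamental group embeds in $\pi_{1}(\mathcal{G}(\Gamma))$ and is therefore residually a $\mathcal{C}$\nobreakdash-group. If $e$ is not a loop this group is the amalgam $\langle G_{e(1)} * G_{e(-1)};\ H_{e}\varphi_{+e} = H_{e}\varphi_{-e} \rangle$; centrality makes $\operatorname{Aut}_{P}(U)$ trivial, hence abelian, and properness supplies $A \ne U \ne B$, so the necessity half of Theorem~\ref{te302} yields that $H_{e}\varphi_{\pm e}$ is $\mathfrak{P}(\mathcal{C})^{\prime}$\nobreakdash-isolated in $G_{e(\pm 1)}$. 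If $e$ is a loop the subgroup is the corresponding HNN\nobreakdash-extension and properness gives $H \ne G \ne K$; the isolation then follows from the necessity half of Theorem~\ref{te303} (or Theorem~\ref{te304} when the edge subgroup is cyclic), and in the remaining generality from the necessity part of the same HNN analysis. Running over all edges yields the per-edge isolation.

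The sufficiency direction of the second assertion uses the per-edge isolation just shown as the separability input to the general criterion; for a finite tree this is exactly Theorem~\ref{te305}, and for a general finite graph one passes to a spanning tree and folds in the non-tree edges. I expect the compatibility clause of the general criterion to be the main obstacle: one must verify that the $\mathcal{C}$\nobreakdash-topology each incident vertex group induces on a shared edge subgroup is the canonical $\mathfrak{P}(\mathcal{C})$\nobreakdash-profinite one and that the two agree across the edge. What makes this tractable --- and what the hypotheses of Theorem~\ref{te306} are engineered to secure --- is that every edge subgroup is a direct factor of the torsion-free $\mathcal{C}\mbox{-}\mathcal{BN}_{\mathfrak{P}(\mathcal{C})}$\nobreakdash-group $H_{v}$, so Theorem~\ref{te201} fixes its topology unambiguously while the trivial-intersection condition prevents distinct edges from interfering.
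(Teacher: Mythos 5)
Your opening move --- applying Theorem~\ref{te204} at each vertex with $X = G_{v}$, $Y = H_{v}$ (and again with $Y = H_{e}\varphi_{\varepsilon e}$) to convert isolation into $\mathcal{C}$\nobreakdash-separability --- is exactly what the paper does. But the general criterion you then feed this into is not the one that is actually available, and the substitution creates a real gap. The paper derives both assertions from ready-made results of \cite{Sokolov2021SMJ2} (Propositions~\ref{pe805} and~\ref{pe806}), whose hypotheses are: residual $\mathcal{C}$\nobreakdash-ness of the $G_{v}$, $\mathcal{C}$\nobreakdash-separability of $H_{v}$ (resp.\ of the $H_{e}\varphi_{\varepsilon e}$), and \emph{$\mathcal{C}$\nobreakdash-regularity of $G_{v}$ with respect to $H_{v}$}. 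That last condition is the whole point of the hypothesis that $G_{v}$ maps onto a $\mathcal{C}\mbox{-}\mathcal{BN}_{\mathfrak{P}(\mathcal{C})}$\nobreakdash-group injectively on $H_{v}$: it is supplied by Proposition~\ref{pe603}, which you never invoke. In its place you posit a ``compatibility of induced $\mathcal{C}$\nobreakdash-topologies'' clause and declare it automatic because any isomorphism of the edge group is a homeomorphism for its canonical $\mathfrak{P}(\mathcal{C})$\nobreakdash-profinite topology. That conflates the \emph{intrinsic} topology of the abelian group $H_{e}\varphi_{\varepsilon e}$ (which Theorem~\ref{te201} does control) with the topology \emph{induced from $G_{e(\varepsilon)}$}, and the agreement of these two is precisely the non-trivial quasiregularity statement of Proposition~\ref{pe603}; it is not automatic, and no criterion of the form you describe is established in the paper.

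Two further points. For the sufficiency half of assertion~2 your plan (spanning tree plus ``folding in the non-tree edges'') is not carried out and would require a separate HNN-step argument with the same regularity input; note also that per-edge isolation does not imply isolation of $H_{v}$, so assertion~2 is not a corollary of assertion~1 --- the paper avoids this by citing Proposition~\ref{pe806}, which is already an ``if and only if'' and disposes of your necessity-via-one-edge-subgraphs detour as well (that detour is workable for non-loop edges via Theorem~\ref{te302}, but for loops you wave at ``the same HNN analysis'' without checking that the hypotheses of Theorem~\ref{te303} hold --- they do, since trivial intersection gives $H_{2}=1$, but this needs saying). Finally, the criteria being imported assume $\mathcal{C}$ is closed under taking quotient groups; the paper removes this assumption via Proposition~\ref{pe807}, a step absent from your proposal.
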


The~remaining part of~the~article is organized as~follows. In~Sections~\ref{se05} and~\ref{se06}, we give equivalent definitions of~a~(weakly) $\mathcal{C}$\nobreakdash-bound\-ed abelian group, introduce the~notion of~a~(weakly) $\mathcal{C}$\nobreakdash-bound\-ed solvable group, and~establish a~number of~properties of~$\mathcal{C}$\nobreakdash-bound\-ed groups. In~Sections~\ref{se07} and~\ref{se08}, all the~theorems and~corollaries formulated above are proved. Section~\ref{se04} contains some auxiliary assertions used in~the~proofs. In~Section~\ref{se09}, we construct the~above-mentioned example of~a~weakly $\mathcal{C}$\nobreakdash-bound\-ed nilpotent group that does not have the~property~$\mathcal{C}\mbox{-}\mathfrak{Sep}$.

\section{Some auxiliary statements}\label{se04}

The~next three assertions describe some families of~groups that necessarily belong to~a~given root class.

\begin{eproposition}\label{pe401}
\textup{\cite[Proposition~8]{Tumanova2019SMJ}}
Let $\mathcal{C}$ be a~root class of~groups consisting only of~periodic groups. A~finite solvable group belongs to~$\mathcal{C}$ if and~only if its order is a~$\mathfrak{P}(\mathcal{C})$\nobreakdash-num\-ber. In~particular, $\mathcal{C}$ contains all finite $p$\nobreakdash-groups for~some prime number~$p$.
\end{eproposition}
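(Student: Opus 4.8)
The plan is to prove the two implications of the equivalence separately, the forward one being immediate and the converse carrying essentially all the content, and then to read off the ``in particular'' clause. For the forward direction, suppose $G \in \mathcal{C}$ is finite (solvability plays no role here). If a prime $p$ divides $|G|$, then by Cauchy's theorem $G$ contains an element of order~$p$; since $p$ therefore divides the order of an element of the $\mathcal{C}$\nobreakdash-group~$G$, it follows straight from the definition of~$\mathfrak{P}(\mathcal{C})$ that $p \in \mathfrak{P}(\mathcal{C})$. Hence every prime divisor of~$|G|$ lies in~$\mathfrak{P}(\mathcal{C})$, i.e.\ $|G|$ is a~$\mathfrak{P}(\mathcal{C})$-number.

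For the converse, the first step I would take is to record the key preliminary fact that, for every $p \in \mathfrak{P}(\mathcal{C})$, the cyclic group of order~$p$ belongs to~$\mathcal{C}$. Indeed, by the definition of~$\mathfrak{P}(\mathcal{C})$ there is a~$\mathcal{C}$\nobreakdash-group~$X$ with an element~$x$ whose order~$n$ is divisible by~$p$; then $\langle x \rangle$ lies in~$\mathcal{C}$ by closure under subgroups, and its subgroup $\langle x^{n/p}\rangle$ of order~$p$ lies in~$\mathcal{C}$ for the same reason. With this in hand, let $G$ be finite solvable with $|G|$ a~$\mathfrak{P}(\mathcal{C})$-number. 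Solvability lets me choose a composition series $1 = G_{0} \triangleleft G_{1} \triangleleft \cdots \triangleleft G_{k} = G$ whose factors $G_{i+1}/G_{i}$ are cyclic of prime order~$p_{i}$; each $p_{i}$ divides~$|G|$, hence lies in~$\mathfrak{P}(\mathcal{C})$, so every factor belongs to~$\mathcal{C}$ by the preliminary fact. I then build $G$ up along this series by induction on~$i$: $G_{1} \in \mathcal{C}$, and if $G_{i} \in \mathcal{C}$ then $G_{i+1}$, being an extension of~$G_{i}$ by the $\mathcal{C}$\nobreakdash-group $G_{i+1}/G_{i}$, lies in~$\mathcal{C}$ by closure under extensions; thus $G = G_{k} \in \mathcal{C}$.

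For the ``in particular'' clause, note that a root class contains a non-trivial group, which is periodic and so has an element of prime order, whence $\mathfrak{P}(\mathcal{C}) \ne \varnothing$. Fixing any $p \in \mathfrak{P}(\mathcal{C})$, every finite $p$\nobreakdash-group is nilpotent, hence solvable, and has order a power of~$p$, hence a~$\mathfrak{P}(\mathcal{C})$-number; the equivalence just proved then places it in~$\mathcal{C}$. I do not expect a genuine obstacle: the whole proposition reduces to the two closure properties (subgroups and extensions) together with the elementary structure theory of finite solvable groups. The only point demanding care is the role of solvability, which enters precisely to guarantee that the composition factors are cyclic of prime order and thus already known to lie in~$\mathcal{C}$; for a general finite group the nonabelian simple composition factors would not be controlled by~$\mathfrak{P}(\mathcal{C})$ alone.
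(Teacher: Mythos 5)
Your proof is correct. The paper does not prove this proposition itself but imports it from \cite{Tumanova2019SMJ}, and your argument --- establishing that each $p \in \mathfrak{P}(\mathcal{C})$ contributes a cyclic group of order~$p$ to~$\mathcal{C}$ via closure under subgroups, then climbing a composition series with prime-order cyclic factors using closure under extensions --- is exactly the standard route on which the cited result rests, with the forward direction and the ``in particular'' clause handled correctly as well.
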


\begin{eproposition}\label{pe402}
\textup{\cite[Proposition~17]{SokolovTumanova2020IVM}}
If $\mathcal{C}$ is a~root class of~groups consisting only of~periodic groups, then an~arbitrary $\mathcal{C}$\nobreakdash-group is of~finite exponent.
\end{eproposition}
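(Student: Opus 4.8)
The plan is to argue by contradiction, exploiting the closure of a root class under Cartesian products indexed by its own members. Recall first that, for any group, having finite exponent is equivalent to the orders of its elements being bounded: if every element has order at most~$N$, then $\operatorname{lcm}(1, 2, \ldots, N)$ is divisible by each such order and hence serves as an exponent. So it suffices to show that no $\mathcal{C}$-group can contain elements of unboundedly large order.

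Let $X \in \mathcal{C}$. If $X$ is finite, its exponent divides $|X|$ and there is nothing to prove, so I assume $X$ is infinite and, seeking a contradiction, that the orders of its elements are unbounded. Then I can choose a sequence $x_{1}, x_{2}, \ldots$ of elements of~$X$ whose orders $n_{i} = \operatorname{ord}(x_{i})$ strictly increase to infinity.

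The key step is to form the Cartesian power $Z = \prod_{y \in X} X_{y}$, where each $X_{y}$ is an isomorphic copy of~$X$ and the index set is~$X$ itself. Since $X$ serves simultaneously as the common factor and as the index set, and $X \in \mathcal{C}$, the defining closure property of a root class yields $Z \in \mathcal{C}$. Because $X$ is infinite, I may fix an injection $i \mapsto y_{i}$ of~$\mathbb{N}$ into the index set~$X$ and define an element $f \in Z$ by setting its $y_{i}$-th coordinate equal to~$x_{i}$ and all remaining coordinates equal to~$1$. A relation $f^{n} = 1$ holds if and only if $x_{i}^{n} = 1$ for every~$i$, that is, if and only if $n$ is divisible by every~$n_{i}$; since the $n_{i}$ are unbounded, no such positive integer~$n$ exists, and so $f$ has infinite order.

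This contradicts the hypothesis that $\mathcal{C}$ consists only of periodic groups, for $Z \in \mathcal{C}$ would then have to be periodic. Hence the orders of the elements of~$X$ are bounded, and $X$ is of finite exponent. I expect the only delicate point to be verifying that the product used is genuinely of the form permitted in the definition of a root class; this is resolved precisely by taking both the common factor and the index set to be~$X$, which is legitimate exactly because $X \in \mathcal{C}$.
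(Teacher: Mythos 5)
Your proof is correct. The paper itself does not prove this proposition but merely quotes it from \cite{SokolovTumanova2020IVM}, so there is no internal argument to compare against; your reasoning --- embedding a sequence of elements of unbounded order into the Cartesian power $\prod_{y \in X} X_{y}$, which lies in $\mathcal{C}$ because both the factor and the index set equal $X \in \mathcal{C}$, and observing that the resulting diagonal-type element has infinite order, contradicting periodicity --- is exactly the standard argument one expects for this fact, and every step (the equivalence of finite exponent with bounded element orders, the legitimacy of the product, the computation of the order of $f$) checks out.
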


\begin{eproposition}\label{pe403}
If $\mathcal{C}$ is a~root class of~groups, $X$ is a~non-trivial $\mathcal{C}$\nobreakdash-group, and~$\mathfrak{c}$ is the~cardinality of~$X$, then the~following statements hold.

\textup{1.}\hspace{1ex}If $\mathcal{C}$ contains at~least one non-periodic group, then it includes all free solvable groups whose cardinalities do not exceed~$2^{\mathfrak{c}}$. In~particular, $\mathcal{C}$ contains any free solvable group of~cardinality less than~$\aleph_{\infty}$.

\textup{2.}\hspace{1ex}If $\mathcal{C}$ consists only of~periodic groups, then it includes all periodic solvable $\mathfrak{P}(\mathcal{C})$\nobreakdash-groups of~finite exponent whose cardinalities do not exceed~$2^{\mathfrak{c}}$. In~particular, together with~some infinite group, $\mathcal{C}$~contains any periodic solvable $\mathfrak{P}(\mathcal{C})$\nobreakdash-group that has a~finite exponent and~a~cardinality less than~$\aleph_{\infty}$.
\end{eproposition}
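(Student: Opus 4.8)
The plan is to treat both statements in parallel, reducing each to an assertion about abelian groups and then building the required groups inside $\mathcal{C}$ from the root-class axioms. The common engine is the observation that, when $\mathfrak{c}$ is infinite, the Cartesian power $Y=\prod_{x\in X}X_{x}$ is again a $\mathcal{C}$-group (it is a Cartesian product of the permitted form, with both the base and the index group equal to $X$) of cardinality $\mathfrak{c}^{\mathfrak{c}}=2^{\mathfrak{c}}$. Consequently, for \emph{any} $\mathcal{C}$-group $Z$ and any cardinal $\lambda\le 2^{\mathfrak{c}}$, the full Cartesian power $\prod_{y\in Y}Z_{y}$ lies in $\mathcal{C}$, and the restricted direct power $Z^{(\lambda)}$, being isomorphic to a subgroup of it, lies in $\mathcal{C}$ as well. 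The case of finite $\mathfrak{c}$ is disposed of separately: then $2^{\mathfrak{c}}$ is finite, the only free solvable group of that cardinality is trivial (Statement~1), while every finite solvable $\mathfrak{P}(\mathcal{C})$-group of order at most $2^{\mathfrak{c}}$ belongs to $\mathcal{C}$ directly by Proposition~\ref{pe401}, since its order is then a $\mathfrak{P}(\mathcal{C})$-number (Statement~2).

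Next I would reduce the solvable case to the abelian one. If $S$ is solvable with derived series $1=S^{(d)}\trianglelefteq\cdots\trianglelefteq S^{(1)}\trianglelefteq S^{(0)}=S$, then each factor $S^{(i)}/S^{(i+1)}$ is abelian, has cardinality at most $|S|\le 2^{\mathfrak{c}}$, and inherits the relevant structure: for a free solvable group these factors are free abelian, and for a solvable $\mathfrak{P}(\mathcal{C})$-group of finite exponent they are abelian $\mathfrak{P}(\mathcal{C})$-groups of finite exponent. Since $\mathcal{C}$ is closed under extensions, an induction along the derived series (applying extension-closure at each step, with the abelian factor as kernel and the previously constructed quotient $S/S^{(i)}$ as cokernel) yields $S\in\mathcal{C}$ as soon as all these abelian factors lie in $\mathcal{C}$. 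It therefore remains to handle abelian groups.

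For Statement~1, since $\mathcal{C}$ contains a non-periodic group it contains an element of infinite order, hence $\mathbb{Z}\in\mathcal{C}$ by closure under subgroups. A free abelian group of rank $\lambda\le 2^{\mathfrak{c}}$ is the restricted power $\mathbb{Z}^{(\lambda)}$, which lies in $\mathcal{C}$ by the engine; feeding this into the extension induction produces every free solvable group of cardinality at most $2^{\mathfrak{c}}$. For Statement~2, let $A$ be an abelian factor of finite exponent $n$ (a $\mathfrak{P}(\mathcal{C})$-number). By the Pr\"ufer--Baer theorem $A$ is a direct sum of cyclic groups, each of order dividing $n$, and each such summand embeds in $\mathbb{Z}/n\mathbb{Z}$, which belongs to $\mathcal{C}$ by Proposition~\ref{pe401}. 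Hence $A$ embeds in the restricted power $(\mathbb{Z}/n\mathbb{Z})^{(\lambda)}$ with $\lambda=|A|\le 2^{\mathfrak{c}}$, and the engine places $A$ in $\mathcal{C}$.

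Finally, the two ``in particular'' clauses follow by iterating the exponential bound. Starting from an infinite $\mathcal{C}$-group --- which is automatic in Statement~1 (namely $\mathbb{Z}$) and is exactly the extra hypothesis in Statement~2 --- one forms the sequence $X_{0},\ X_{1}=X_{0}^{X_{0}},\ X_{2}=X_{1}^{X_{1}},\dots$ of $\mathcal{C}$-groups, whose cardinalities $|X_{0}|\ge\beth_{0}$, $|X_{1}|=2^{|X_{0}|}$, $|X_{2}|=2^{|X_{1}|},\dots$ dominate $\beth_{k}$ at stage $k$ and are thus cofinal in $\aleph_{\infty}$. Given any group in question of cardinality below $\aleph_{\infty}$, one chooses $k$ with $|X_{k}|$ at least that cardinality and applies the already-proved bounded form with $X_{k}$ in place of $X$. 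The main technical obstacle is the cardinal bookkeeping: checking that the abelian derived factors never exceed cardinality $2^{\mathfrak{c}}$, that $\mathfrak{c}^{\mathfrak{c}}=2^{\mathfrak{c}}$ is used correctly, and that the iterated powers are genuinely cofinal in $\aleph_{\infty}$ (equivalently, that $\aleph_{\infty}$ is closed under cardinal exponentiation); the algebraic inputs (Nielsen--Schreier freeness of the derived factors and the Pr\"ufer--Baer decomposition) are standard.
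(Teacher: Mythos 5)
Your proposal is correct and follows essentially the same route as the paper: the Cartesian power $\prod_{x\in X}X_{x}$ of cardinality at least $2^{\mathfrak{c}}$, then Cartesian powers of $\mathbb{Z}$ (respectively of a cyclic group whose order is the finite exponent, a $\mathfrak{P}(\mathcal{C})$-number) indexed by it, the Pr\"ufer decomposition of the abelian derived factors, and closure under subgroups and extensions along the derived series. One minor aside: the needed cofinality of the iterated powers below $\aleph_{\infty}$ follows simply from $\beth_{k}\geqslant\aleph_{k}$ and is not ``equivalent to $\aleph_{\infty}$ being closed under cardinal exponentiation'' (a property that is neither required nor provable); this slip occurs only in your commentary and does not affect the argument.
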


\begin{proof}
Let us prove Statements~1 and~2 simultaneously.

Suppose that $Y^{(\alpha)}$ and~$Y^{(\beta)}$ are a~free solvable group and~a~periodic solvable $\mathfrak{P}(\mathcal{C})$\nobreakdash-group of~finite exponent, respectively, whose cardinalities do not exceed~$2^{\mathfrak{c}}$ (hereinafter, groups with~index~$\beta$ are considered only if $\mathcal{C}$ consists of~periodic groups and~therefore the~set~$\mathfrak{P}(\mathcal{C})$ is defined). Suppose also that $Z^{(\alpha)}$ is an~infinite cyclic group, $q$ is the~exponent of~$Y^{(\beta)}$ (which is a~$\mathfrak{P}(\mathcal{C})$\nobreakdash-num\-ber), and~$Z^{(\beta)}$ is a~cyclic group of~order~$q$. We note that, if $\mathcal{C}$ includes at~least one non-periodic group, then $Z^{(\alpha)} \in \mathcal{C}$ because $\mathcal{C}$ is closed under taking subgroups; if $\mathcal{C}$ consists only of~periodic groups, then $Z^{(\beta)} \in \mathcal{C}$ by~Proposition~\ref{pe401}.

Let us put
$$
\mathcal{I} = \prod_{x \in X}X_{x}^{\vphantom{(k)}},\quad
D^{(k)}_{\vphantom{i}} = \prod_{i \in \mathcal{I}}Z_{i}^{(k)},
$$
where $k \in \{\alpha, \beta\}$, $\prod$~stands for~Cartesian product, $X_{x}^{\vphantom{(k)}}$ and~$Z_{i}^{(k)}$ are isomorphic copies of~$X$ and~$Z^{(k)}_{\vphantom{i}}$ respectively for~all $x \in X$, $i \in \mathcal{I}$. Then $\mathcal{I}, D^{(\alpha)}_{\vphantom{i}}, D^{(\beta)}_{\vphantom{i}} \in \mathcal{C}$ by~the~definition of~a~root class, and~$2^{\mathfrak{c}} \leqslant \operatorname{card}\mathcal{I} \leqslant 2^{\mathfrak{c} \cdot \mathfrak{c}}$.

Let $F^{(k)}$ be an~arbitrary factor of~the~derived series of~$Y^{(k)}$, $k \in \{\alpha, \beta\}$. Then $F^{(\alpha)}$ is a~free abelian group and~$F^{(\beta)}$ is a~periodic abelian group whose exponent is finite and~divides~$q$. The~latter means that $F^{(\beta)}$ has a~finite number of~$p$\nobreakdash-power torsion subgroups. By~the~first Pr\"ufer theorem, each of~these subgroups can be decomposed into~a~direct product of~cyclic subgroups whose orders also divide~$q$. Therefore, it follows from~the~relations
$$
\operatorname{card}F^{(k)} \leqslant \operatorname{card}Y^{(k)} \leqslant 2^{\mathfrak{c}},\quad
k \in \{\alpha, \beta\},
$$
that $F^{(k)} \leqslant D^{(k)}$. Thus, $F^{(k)} \in \mathcal{C}$ and~$Y^{(k)} \in \mathcal{C}$ because $\mathcal{C}$ is closed under taking subgroups and~extensions.
\end{proof}

\begin{eproposition}\label{pe404}
\textup{\cite[Proposition~3]{SokolovTumanova2020IVM}}
If $\mathcal{C}$ is an~arbitrary class of~groups, $X$ is a~group, and~$Y$ is a~normal subgroup~of~$X$, then the~following statements hold.

\textup{1.}\hspace{1ex}If $X/Y$ is residually a~$\mathcal{C}$\nobreakdash-group, then $Y$ is $\mathcal{C}$\nobreakdash-sepa\-ra\-ble~in~$X$.

\textup{2.}\hspace{1ex}If $\mathcal{C}$ is closed under taking quotient groups and~$Y$ is $\mathcal{C}$\nobreakdash-sepa\-ra\-ble in~$X$, then $X/Y$ is residually a~$\mathcal{C}$\nobreakdash-group.
\end{eproposition}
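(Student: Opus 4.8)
The plan is to prove both statements directly from the definitions, exploiting the single observation that a homomorphism $\sigma$ of~$X$ satisfies $Y\sigma = 1$ precisely when it factors through the natural projection $\rho\colon X \to X/Y$. Throughout, $\rho$ denotes this projection.

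For Statement~1, suppose $X/Y$ is residually a $\mathcal{C}$-group and take an arbitrary $x \in X \setminus Y$. Then $x\rho = xY$ is a non-trivial element of~$X/Y$, so by hypothesis there is a homomorphism~$\tau$ of~$X/Y$ onto a $\mathcal{C}$-group~$C$ with $(x\rho)\tau \ne 1$. Put $\sigma = \rho\tau$; being the composite of two surjections, it maps $X$ onto $C \in \mathcal{C}$. Since $Y\rho = 1$, we get $Y\sigma = 1$ while $x\sigma = (x\rho)\tau \ne 1$, so $x\sigma \notin Y\sigma$. As $x$ was arbitrary, $Y$ is $\mathcal{C}$-separable. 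I note that this direction uses no closure assumption on~$\mathcal{C}$.

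For Statement~2, assume $\mathcal{C}$ is closed under taking quotients and $Y$ is $\mathcal{C}$-separable; let $\bar x = xY$ be a non-trivial element of~$X/Y$, so $x \notin Y$. Separability yields a homomorphism~$\sigma$ of~$X$ onto a $\mathcal{C}$-group~$C$ with $x\sigma \notin Y\sigma$. Here the normality of~$Y$ enters: since $\sigma$ is surjective and $Y \trianglelefteq X$, the image $Y\sigma$ is normal in~$C$, so the quotient $C/Y\sigma$ is defined, and it lies in~$\mathcal{C}$ by the closure hypothesis. Composing $\sigma$ with the projection $C \to C/Y\sigma$ gives a surjection of~$X$ onto $C/Y\sigma$ whose kernel contains~$Y$; hence it factors as $\rho\bar\sigma$ for some homomorphism~$\bar\sigma$ of~$X/Y$ onto $C/Y\sigma$. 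Then $\bar x\,\bar\sigma = (x\sigma)(Y\sigma) \ne 1$ because $x\sigma \notin Y\sigma$, so $X/Y$ is residually a $\mathcal{C}$-group.

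Both arguments are routine diagram-chasing, so I do not expect any genuine obstacle. The only points requiring attention are structural rather than technical: in Statement~2 the normality of~$Y$ is exactly what guarantees that $Y\sigma$ is normal in the image and hence that the quotient group $C/Y\sigma$ exists, while the closure of~$\mathcal{C}$ under quotients is precisely what keeps this quotient inside~$\mathcal{C}$. The asymmetry in the hypotheses is explained by the fact that Statement~1 never forms a new quotient and therefore needs neither assumption.
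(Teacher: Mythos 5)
Your proof is correct, and it is the standard argument; the paper itself does not prove Proposition~\ref{pe404} but imports it from \cite[Proposition~3]{SokolovTumanova2020IVM}, and your two-way factorization through the projection $X \to X/Y$ is exactly the expected route, including the correct identification of where normality of~$Y$ and closure of~$\mathcal{C}$ under quotients are each used.
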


\begin{eproposition}\label{pe405}
\textup{\cite[Proposition~5]{SokolovTumanova2016SMJ}}
If $\mathcal{C}$ is a~class of~groups consisting only of~periodic groups, $X$ is a~group, and~$Y$ is a~$\mathcal{C}$\nobreakdash-sepa\-ra\-ble subgroup of~$X$, then $Y$ is $\mathfrak{P}(\mathcal{C})^{\prime}$\nobreakdash-iso\-lat\-ed~in~$X$.
\end{eproposition}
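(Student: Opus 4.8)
The plan is to verify the isolation condition directly by contradiction. I would fix an arbitrary $x \in X$ and a prime $q \in \mathfrak{P}(\mathcal{C})^{\prime}$ satisfying $x^{q} \in Y$, and I would assume, aiming for a contradiction, that $x \notin Y$. Since $Y$ is $\mathcal{C}$-separable and $x \in X \setminus Y$, the definition supplies a homomorphism~$\sigma$ of~$X$ onto some group $C \in \mathcal{C}$ with $x\sigma \notin Y\sigma$.

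The crux is the order of the element $x\sigma$ in~$C$. Because $\mathcal{C}$ consists only of periodic groups, $C$ is periodic and $x\sigma$ has some finite order~$n$. By the very definition of~$\mathfrak{P}(\mathcal{C})$, a prime lies in $\mathfrak{P}(\mathcal{C})^{\prime}$ exactly when it divides the order of no element of any $\mathcal{C}$-group; applied to $x\sigma \in C$, this yields $q \nmid n$, so that $\gcd(q, n) = 1$.

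I would then exploit this coprimality. Picking integers $a, b$ with $aq + bn = 1$ and using $(x\sigma)^{n} = 1$, I obtain $x\sigma = (x\sigma)^{aq + bn} = \big((x^{q})\sigma\big)^{a}$. The hypothesis $x^{q} \in Y$ gives $(x^{q})\sigma \in Y\sigma$, and as $Y\sigma$ is a subgroup of~$C$, the power $\big((x^{q})\sigma\big)^{a}$ again lies in~$Y\sigma$. Hence $x\sigma \in Y\sigma$, contradicting the choice of~$\sigma$. This contradiction forces $x \in Y$, which is precisely what the $\mathfrak{P}(\mathcal{C})^{\prime}$-isolation of~$Y$ demands.

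I do not anticipate a genuine obstacle: the argument is a straightforward unwinding of the two definitions together with the elementary fact that, in a periodic group, an element is recoverable as a power of its $q$-th power whenever $q$ is coprime to its order. The single point deserving attention is the step $q \nmid n$, where the standing hypothesis that $\mathcal{C}$ contains only periodic groups is indispensable: without it, $x\sigma$ need not have finite order and the coprimality trick would have nothing to act on.
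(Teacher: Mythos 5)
Your argument is correct: the coprimality of $q$ with the order of $x\sigma$ (which holds because $q \in \mathfrak{P}(\mathcal{C})^{\prime}$ and $C$ is periodic) lets you recover $x\sigma$ as a power of $(x^{q})\sigma \in Y\sigma$, contradicting the separating homomorphism. The paper itself gives no proof, citing the result from \cite[Proposition~5]{SokolovTumanova2016SMJ}, but your reasoning is the standard argument one would expect there and contains no gaps.
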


In~what follows, if $\mathcal{C}$ is a~class of~groups and~$X$ is a~group, then $\mathcal{C}^{*}(X)$ denotes the~family of~normal subgroups of~$X$ such that $Y \in \mathcal{C}^{*}(X)$ if and~only if $X/Y \in \mathcal{C}$.

\begin{eproposition}\label{pe406}
\textup{\cite[Proposition~1]{SokolovTumanova2016SMJ}}
If $\mathcal{C}$ is a~class of~groups closed under taking subgroups and~finite direct products, then, for~any group~$X$, the~intersection of~a~finite number of~subgroups from~$\mathcal{C}^{*}(X)$ is again a~subgroup from~this family.
\end{eproposition}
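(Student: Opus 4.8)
The plan is to reduce the statement to the case of two subgroups by a routine induction, and then to realize the quotient by the intersection as a subgroup of a direct product of the two given quotients.

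First I would note that it suffices to treat the intersection of two subgroups. Indeed, if $Y_{1} \cap Y_{2} \in \mathcal{C}^{*}(X)$ whenever $Y_{1}, Y_{2} \in \mathcal{C}^{*}(X)$, then, writing $\bigcap_{i=1}^{n} Y_{i} = \big(\bigcap_{i=1}^{n-1} Y_{i}\big) \cap Y_{n}$, an induction on~$n$ immediately yields the general claim. So fix $Y_{1}, Y_{2} \in \mathcal{C}^{*}(X)$; both are normal in~$X$, hence so is their intersection, and the quotient $X/(Y_{1} \cap Y_{2})$ is well defined.

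The key step is to consider the diagonal homomorphism $\rho \colon X \to (X/Y_{1}) \times (X/Y_{2})$ given by $x\rho = (xY_{1},\, xY_{2})$. I would check that $\ker\rho = Y_{1} \cap Y_{2}$, since $x\rho$ is the identity if and~only if $x \in Y_{1}$ and~$x \in Y_{2}$. By~the first isomorphism theorem, $X/(Y_{1} \cap Y_{2})$ is therefore isomorphic to a~subgroup of~$(X/Y_{1}) \times (X/Y_{2})$.

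It remains to invoke the hypothesised closure properties of~$\mathcal{C}$. Because $Y_{1}, Y_{2} \in \mathcal{C}^{*}(X)$, both factors $X/Y_{1}$ and~$X/Y_{2}$ lie in~$\mathcal{C}$; since $\mathcal{C}$ is closed under finite direct products, their product $(X/Y_{1}) \times (X/Y_{2})$ lies in~$\mathcal{C}$ as~well; and since $\mathcal{C}$ is closed under taking subgroups, the isomorphic copy of~$X/(Y_{1} \cap Y_{2})$ inside it belongs to~$\mathcal{C}$. Hence $X/(Y_{1} \cap Y_{2}) \in \mathcal{C}$, that is, $Y_{1} \cap Y_{2} \in \mathcal{C}^{*}(X)$, completing the inductive step. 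There is no genuine obstacle in~this argument; the only point requiring a~moment's attention is the identification $\ker\rho = Y_{1} \cap Y_{2}$ and the observation that the intersection of~normal subgroups is normal, so that all the quotients appearing are meaningful.
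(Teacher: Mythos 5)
Your proof is correct and is the standard argument for this fact: the paper itself only cites Proposition~\ref{pe406} from~\cite{SokolovTumanova2016SMJ} without reproducing a proof, and the cited proof is exactly this diagonal-embedding argument, namely that $X/(Y_{1} \cap Y_{2})$ embeds into $(X/Y_{1}) \times (X/Y_{2})$ followed by induction on the number of subgroups. No gaps.
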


Suppose that $\mathcal{C}$ is a~class of~groups, $X$ is a~group, and~$Y$ is a~subgroup of~$X$. It~is easy to~see that, in~$X$, the~intersection of~any number of~$\mathcal{C}$\nobreakdash-sepa\-ra\-ble subgroups is again a~$\mathcal{C}$\nobreakdash-sepa\-ra\-ble subgroup. Therefore, there exists the~smallest $\mathcal{C}$\nobreakdash-sepa\-ra\-ble subgroup of~$X$ containing~$Y$. We refer to~this subgroup as~the~\emph{$\mathcal{C}$\nobreakdash-clo\-sure} of~$Y$ in~$X$ and~denote it by~$\mathcal{C}\mbox{-}\mathfrak{Cl}(X, Y)$. It~follows from~Proposition~\ref{pe405} that if $\mathcal{C}$ consists only of~periodic groups, then, for~any $X$ and~$Y$, the~relation $\mathfrak{P}(\mathcal{C})^{\prime}\mbox{-}\mathfrak{Is}(X, Y) \leqslant \mathcal{C}\mbox{-}\mathfrak{Cl}(X, Y)$ holds.

\begin{eproposition}\label{pe407}
\textup{\cite[Proposition~3]{Sokolov2017SMJ}}
Given an~arbitrary class of~groups~$\mathcal{C}$, the~equality
$$
\mathcal{C}\mbox{-}\mathfrak{Cl}(X, Y) = \bigcap_{N \in \mathcal{C}^{*}(X)} YN
$$
holds for~any group~$X$ and~its subgroup~$Y$.
\end{eproposition}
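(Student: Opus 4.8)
The plan is to prove the stated equality by two inclusions, in each direction exploiting the standard correspondence between surjective homomorphisms of~$X$ onto~$\mathcal{C}$\nobreakdash-groups and the subgroups in~$\mathcal{C}^{*}(X)$: a surjection $\sigma\colon X \to Q$ with $Q \in \mathcal{C}$ has kernel $N = \ker\sigma \in \mathcal{C}^{*}(X)$, while the canonical projection $X \to X/N$ realizes each $N \in \mathcal{C}^{*}(X)$. Under this correspondence, for any $x \in X$ the condition $x\sigma \notin Y\sigma$ is literally the same as $x \notin YN$, because $x\sigma \in Y\sigma$ means $xy^{-1} \in \ker\sigma = N$ for some $y \in Y$, i.e.~$x \in YN$. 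I would record this elementary reformulation at the outset and use it repeatedly.

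Write $S = \bigcap_{N \in \mathcal{C}^{*}(X)} YN$. First I would note that each $YN$ is a subgroup containing~$Y$ (since $N$ is normal in~$X$), so the same is true of their intersection~$S$, and $Y \leqslant S$. The first step is to check that $S$ is $\mathcal{C}$\nobreakdash-sepa\-ra\-ble in~$X$. Take $x \in X \setminus S$; by the definition of~$S$ there is some $N \in \mathcal{C}^{*}(X)$ with $x \notin YN$, and I let $\sigma\colon X \to X/N$ be the natural projection onto the $\mathcal{C}$\nobreakdash-group~$X/N$. Since $S \leqslant YN$, we have $S\sigma \leqslant Y\sigma$, so the reformulation gives $x\sigma \notin Y\sigma \supseteq S\sigma$; thus $\sigma$ separates $x$ from~$S$. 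As $S$ is a $\mathcal{C}$\nobreakdash-sepa\-ra\-ble subgroup containing~$Y$ and $\mathcal{C}\mbox{-}\mathfrak{Cl}(X, Y)$ is by definition the smallest such subgroup, this yields $\mathcal{C}\mbox{-}\mathfrak{Cl}(X, Y) \leqslant S$.

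For the reverse inclusion I would argue contrapositively. Put $C = \mathcal{C}\mbox{-}\mathfrak{Cl}(X, Y)$ and take $x \in X \setminus C$. Since $C$ is $\mathcal{C}$\nobreakdash-sepa\-ra\-ble, there is a homomorphism~$\sigma$ of~$X$ onto some $Q \in \mathcal{C}$ with $x\sigma \notin C\sigma$. Setting $N = \ker\sigma \in \mathcal{C}^{*}(X)$ and using $Y \leqslant C$, we get $x\sigma \notin C\sigma \supseteq Y\sigma$, so $x \notin YN$ by the reformulation, whence $x \notin S$. This shows $S \leqslant C$, and together with the previous paragraph completes the proof.

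I expect no serious obstacle here: the argument is a direct unwinding of the definitions, and the only point needing care is the bookkeeping of the equivalence $x\sigma \notin Y\sigma \iff x \notin YN$ combined with the monotonicity inclusions $S \leqslant YN$ and $Y \leqslant C$, which are what let one pass between statements about~$Y$ and statements about~$S$ or~$C$. One should also note the degenerate case $\mathcal{C}^{*}(X) = \varnothing$: then $X$ admits no surjection onto a $\mathcal{C}$\nobreakdash-group, so $X$ is its only $\mathcal{C}$\nobreakdash-sepa\-ra\-ble subgroup and $C = X$, while the empty intersection~$S$ equals~$X$ by convention, so the equality persists.
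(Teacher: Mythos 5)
Your argument is correct: the reformulation $x\sigma \notin Y\sigma \iff x \notin Y\ker\sigma$ together with the two monotonicity observations gives both inclusions, and the degenerate case $\mathcal{C}^{*}(X) = \varnothing$ is handled properly. Note that the paper itself gives no proof of this statement — it is quoted from \cite[Proposition~3]{Sokolov2017SMJ} — so there is nothing to compare against here; your proof is the standard direct unwinding of the definitions that one would expect to find in that reference.
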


\begin{eproposition}\label{pe408}
\textup{\cite[Proposition~4]{Sokolov2017SMJ}}
Suppose that $\mathcal{C}$ is an~arbitrary class of~groups, $X$ is residually a~$\mathcal{C}$\nobreakdash-group, and~$Y$ is a~subgroup of~$X$. If~$Y$ is a~nilpotent group of~class~$c$, then $\mathcal{C}\mbox{-}\mathfrak{Cl}(X, Y)$ is also a~nilpotent group of~class~$c$.
\end{eproposition}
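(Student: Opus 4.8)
The plan is to reduce everything to the description of the $\mathcal{C}$\nobreakdash-closure furnished by Proposition~\ref{pe407} together with the hypothesis that $X$ is residually a~$\mathcal{C}$\nobreakdash-group. Write $C = \mathcal{C}\mbox{-}\mathfrak{Cl}(X, Y)$ and, for~each integer $k \geqslant 1$, let $\gamma_{k}(\cdot)$ denote the~$k$th term of~the~lower central series, so that a~group has nilpotency class~$c$ exactly when $\gamma_{c+1}$ of~it is trivial while $\gamma_{c}$ is~not. By~Proposition~\ref{pe407}, $C = \bigcap_{N \in \mathcal{C}^{*}(X)} YN$; in~particular $C$ is a~subgroup of~$X$ with~$Y \leqslant C \leqslant YN$ for~every $N \in \mathcal{C}^{*}(X)$. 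Applying Proposition~\ref{pe407} to~the~trivial subgroup (or~directly using that $X$ is residually a~$\mathcal{C}$\nobreakdash-group), I~also record the~key fact $\bigcap_{N \in \mathcal{C}^{*}(X)} N = 1$.

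The~heart of~the~argument is a~local computation modulo one fixed $N \in \mathcal{C}^{*}(X)$. I~would pass to~the~quotient $\pi\colon X \to X/N$. Since $N = \ker\pi$, the~image $\pi(YN)$ equals $\pi(Y) \cong Y/(Y \cap N)$, which is a~quotient of~$Y$ and~hence a~nilpotent group of~class at~most~$c$. Using that homomorphisms commute with~the~lower central series, $\pi\big(\gamma_{c+1}(YN)\big) = \gamma_{c+1}\big(\pi(YN)\big) = \gamma_{c+1}\big(\pi(Y)\big) = 1$, which is to~say $\gamma_{c+1}(YN) \leqslant N$. Because $C \leqslant YN$ and~the~lower central series is monotone under~inclusion of~subgroups, this yields $\gamma_{c+1}(C) \leqslant \gamma_{c+1}(YN) \leqslant N$.

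The~inclusion $\gamma_{c+1}(C) \leqslant N$ holds for~every $N \in \mathcal{C}^{*}(X)$, so intersecting over~all such~$N$ and~invoking $\bigcap_{N} N = 1$ gives $\gamma_{c+1}(C) = 1$. Thus $C$ is nilpotent of~class at~most~$c$. For~the~reverse inequality I~would use that $Y \leqslant C$ and~that a~subgroup of~a~nilpotent group has nilpotency class no~larger than~the~ambient group; since $Y$ has class exactly~$c$, the~class of~$C$ is at~least~$c$. Combining the~two estimates shows that $C$ has nilpotency class precisely~$c$, as~required.

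I~do not expect a~serious obstacle here: once Proposition~\ref{pe407} is in~hand, the~proof is essentially the~single observation that $\gamma_{c+1}(YN) \leqslant N$, which follows because $\pi(Y)$ is a~quotient of~the~class-$c$ group~$Y$. The~only point demanding a~little care is the~bookkeeping with~the~lower central series, namely the~two standard functorial properties $\pi\big(\gamma_{k}(G)\big) = \gamma_{k}\big(\pi(G)\big)$ and~the~monotonicity $\gamma_{k}(A) \leqslant \gamma_{k}(B)$ for~$A \leqslant B$, both of~which are immediate by~induction on~$k$ from~the~definition $\gamma_{k+1} = [\gamma_{k}, \,\cdot\,]$.
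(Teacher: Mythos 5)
Your argument is correct. Note that the paper itself gives no proof of this proposition: it is imported verbatim from \cite[Proposition~4]{Sokolov2017SMJ}, so there is no in-paper argument to compare against. Your route is the natural one and, as far as the cited source goes, essentially the standard one: from $\mathcal{C}\mbox{-}\mathfrak{Cl}(X, Y) = \bigcap_{N \in \mathcal{C}^{*}(X)} YN$ one gets $\gamma_{c+1}\big(\mathcal{C}\mbox{-}\mathfrak{Cl}(X, Y)\big) \leqslant \gamma_{c+1}(YN) \leqslant N$ for every $N$ (equivalently, every commutator of weight $c+1$ in elements $y_{i}n_{i}$ is congruent modulo~$N$ to one in the $y_{i}$ alone, hence trivial modulo~$N$), and residual $\mathcal{C}$\nobreakdash-ness kills the intersection; the lower bound on the class follows from $\gamma_{c}(Y) \leqslant \gamma_{c}\big(\mathcal{C}\mbox{-}\mathfrak{Cl}(X, Y)\big)$. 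The two functorial properties of the lower central series you invoke are exactly what is needed, and no step fails.
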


\begin{eproposition}\label{pe409}
\textup{\cite[Theorems~5.3,~5.6\nobreakdash--5.8]{ClementMajewiczZyman2017}}
If $\mathfrak{P}$ is a~set of~primes, $X$ is a~locally nilpotent group, and~$Y$ is a~subgroup of~$X$, then the~following statements hold.

\textup{1.}\hspace{1ex}$\mathfrak{P}^{\prime}\mbox{-}\mathfrak{Rt}(X, Y) = \mathfrak{P}^{\prime}\mbox{-}\mathfrak{Is}(X, Y)$.

\textup{2.}\hspace{1ex}If $Y$ is $\mathfrak{P}^{\prime}$\nobreakdash-iso\-lat\-ed in~$X$, then the~normalizer of~$Y$ in~$X$ is also $\mathfrak{P}^{\prime}$\nobreakdash-iso\-lat\-ed~in~$X$.

\textup{3.}\hspace{1ex}If $X$ is $\mathfrak{P}^{\prime}$\nobreakdash-tor\-sion-free, then all the~members of~its upper central series are $\mathfrak{P}^{\prime}$\nobreakdash-iso\-lat\-ed in~$X$. The~extraction of~$\mathfrak{P}^{\prime}$\nobreakdash-roots is unique in~such a~group.
\end{eproposition}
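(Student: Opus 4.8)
The plan is to reduce every assertion to the case of a finitely generated nilpotent group and then argue by induction on the nilpotency class, using the centre as the pivot. Since $X$ is locally nilpotent, any finite subset generates a nilpotent subgroup, which is moreover finitely generated and hence Noetherian, so all of its subgroups are again finitely generated; this is what makes the localization legitimate. I also record the facts that hold for an arbitrary group: the set $\mathfrak{P}^{\prime}\mbox{-}\mathfrak{Rt}(X,Y)$ is always closed under inverses and is always $\mathfrak{P}^{\prime}$-isolated (if $g^{k}$ lies in it with $k$ a $\mathfrak{P}^{\prime}$-number, then $g^{kl}\in Y$ for a $\mathfrak{P}^{\prime}$-number $l$, so $g$ lies in it too), and that it is contained in every $\mathfrak{P}^{\prime}$-isolated subgroup containing $Y$. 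Consequently, once $\mathfrak{P}^{\prime}\mbox{-}\mathfrak{Rt}(X,Y)$ is shown to be closed under multiplication, it is automatically the smallest $\mathfrak{P}^{\prime}$-isolated subgroup over $Y$, that is, it coincides with $\mathfrak{P}^{\prime}\mbox{-}\mathfrak{Is}(X,Y)$; this is the entire content of Statement~1. To prove multiplicative closure in a finitely generated nilpotent group $G$ with $H=Y\cap G$, I take $x,y$ with $x^{s},y^{s}\in H$ for a common $\mathfrak{P}^{\prime}$-number $s$, pass to $\overline{G}=G/Z(G)$ (of strictly smaller class), and invoke the inductive hypothesis to get $(xy)^{t}\in HZ(G)$, say $(xy)^{t}=wz$ with $w\in H$ and $z\in Z(G)$. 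As $z$ is central, $(xy)^{tr}=w^{r}z^{r}$ for every $r$, so it remains to produce a $\mathfrak{P}^{\prime}$-number $r$ with $z^{r}\in H$; this last point is the crux and is obtained by a Hall--Petrescu collection analysis of $(xy)^{s}$ against $x^{s}y^{s}\in H$, the abelian case applied inside $Z(G)$, and the coprimality of $s$ with the primes in $\mathfrak{P}$.

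For Statement~3 I would first show that, in a $\mathfrak{P}^{\prime}$-torsion-free nilpotent group, the centre $Z(G)$ is $\mathfrak{P}^{\prime}$-isolated. If $x^{q}\in Z(G)$ with $q$ a $\mathfrak{P}^{\prime}$-number, then $[x^{q},g]=1$ for all $g\in G$; expanding $[x^{q},g]$ by the Hall--Petrescu formula gives leading term $[x,g]^{q}$ with all remaining factors of strictly greater commutator weight, and a descending induction on the weight, using that $\mathfrak{P}^{\prime}$-torsion-freeness kills $q$-th powers, forces $[x,g]=1$ for every $g$, i.e.\ $x\in Z(G)$. Isolation of the centre is exactly the statement that $G/Z(G)$ is again $\mathfrak{P}^{\prime}$-torsion-free, so the argument iterates up the upper central series: each factor $Z_{i}(G)/Z_{i-1}(G)$ is the centre of the $\mathfrak{P}^{\prime}$-torsion-free group $G/Z_{i-1}(G)$, hence isolated there, whence $Z_{i}(G)$ is isolated in $G$ (isolation pulls back along a quotient by an isolated normal subgroup). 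Uniqueness of $\mathfrak{P}^{\prime}$-roots comes out of the same class induction: from $a^{q}=b^{q}$ one gets $\overline{a}=\overline{b}$ in $G/Z(G)$, so $ab^{-1}=z\in Z(G)$, and then $z^{q}=1$ yields $z=1$. The passage back to locally nilpotent $X$ is immediate, as each instance involves only finitely many elements.

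For Statement~2 I again localize. To deduce $g\in N_{X}(Y)$ from $g^{q}\in N_{X}(Y)$ with $q$ a $\mathfrak{P}^{\prime}$-number, it suffices, for each fixed $y\in Y$, to work inside the finitely generated nilpotent group $H=\langle y,g\rangle$ and show that $g$ normalizes $Y\cap H$: here $g^{q}\in H$ normalizes both $Y$ and $H$, hence $Y\cap H$, while $Y\cap H$ is $\mathfrak{P}^{\prime}$-isolated in $H$. In the finitely generated case I run the familiar induction via the centre: conjugation by $g$ fixes $Z(H)$ pointwise and descends to $\overline{H}=H/Z(H)$, where by the inductive hypothesis $g$ normalizes the image of $Y\cap H$, so each $(Y\cap H)^{g}$ lies in $(Y\cap H)Z(H)$. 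Tracking the central component of the $q$-fold conjugate, which lands in $Y\cap H$ because $g^{q}$ normalizes that subgroup, the same coprimality argument together with the isolation of $Y\cap H$ places the central error back inside $Y\cap H$, giving $y^{g}\in Y\cap H\subseteq Y$; applying this also to $g^{-1}$ yields $g\in N_{X}(Y)$.

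The main obstacle throughout is the control of the central correction terms generated whenever a power or a conjugate is pushed down to a central quotient: each reduction step leaves a residue in $Z(G)$, and the whole argument turns on showing that a suitable $\mathfrak{P}^{\prime}$-power of that residue already lies in the relevant subgroup. This is precisely where the Hall--Petrescu collection formula and the coprimality of $\mathfrak{P}^{\prime}$-numbers with the primes of $\mathfrak{P}$ are indispensable, and it is the technical heart of the classical isolator theory for nilpotent groups from which the cited results are drawn; for the present paper it therefore suffices to invoke them rather than to reproduce this calculus in full.
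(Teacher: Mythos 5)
The paper does not prove this proposition at all: it is imported verbatim from Clement--Majewicz--Zyman (Theorems~5.3, 5.6--5.8), so there is no internal argument to compare yours against. Your outline is, in architecture, exactly the classical isolator-theory proof that the cited source carries out -- localize to a finitely generated (hence Noetherian) nilpotent subgroup, induct on the nilpotency class with the centre as pivot, and control the central residue of each reduction by a Hall--Petrescu collection argument combined with the coprimality of $\mathfrak{P}^{\prime}$-numbers to the primes of $\mathfrak{P}$. The reductions you make explicit are correct: the set of $\mathfrak{P}^{\prime}$-roots is always $\mathfrak{P}^{\prime}$-isolated and contained in every $\mathfrak{P}^{\prime}$-isolated subgroup over $Y$, so Statement~1 does come down to multiplicative closure; the centre of a $\mathfrak{P}^{\prime}$-torsion-free nilpotent group is $\mathfrak{P}^{\prime}$-isolated by exactly the commutator-weight induction you describe, and Statement~3 then follows by iterating up the upper central series.

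Two caveats. First, you explicitly defer the ``crux'' of Statement~1 (producing a $\mathfrak{P}^{\prime}$-number $r$ with $z^{r}\in H$ for the central residue $z$) and the analogous step in Statement~2 to an unexecuted collection computation; since that computation is the entire technical content of the theorem, your text is an outline of the known proof rather than a proof, which is defensible here only because the paper itself treats the result as quoted. Second, in your induction for Statement~2 you apply the inductive hypothesis to the image of $Y\cap H$ in $H/Z(H)$, but that image need not be $\mathfrak{P}^{\prime}$-isolated in the quotient (isolation is not automatically inherited by images under central quotients), so the hypothesis of the smaller-class case is not verified; the standard repair is to pass to the $\mathfrak{P}^{\prime}$-isolator of the image, or to run the induction on a differently chosen series. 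As written, that step is a genuine gap in your argument even at the level of an outline.
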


\begin{eproposition}\label{pe410}
\textup{\cite[Lemma~2]{Malcev1958PIvPI}}
If $X$ is a~nilpotent group of~class~$c$, then, for~each $y \in X^{n^{c}}$, the~equation $x^{n} = y$ is solvable~in~$X$.
\end{eproposition}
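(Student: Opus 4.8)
The plan is to prove the equivalent assertion that $X^{n^{c}} \subseteq \{x^{n} \mid x \in X\}$ by induction on the nilpotency class~$c$, where $X^{n^{c}}$ is understood as the verbal subgroup generated by the $n^{c}$\nobreakdash-th powers. Since $(g^{n^{c}})^{-1} = (g^{-1})^{n^{c}}$, every element of this subgroup has the form $y = g_{1}^{n^{c}} \cdots g_{m}^{n^{c}}$, so it suffices to show that each such product is an $n$\nobreakdash-th power. For the base case $c = 1$ the group is abelian and $X^{n} = \{x^{n} \mid x \in X\}$, so there is nothing to prove.

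For the inductive step I would pass to the last nontrivial term $Z = \gamma_{c}(X)$ of the lower central series, which is central, and work in $\bar{X} = X/Z$, a group of class at most $c - 1$. Since $n^{c-1}$ divides $n^{c}$, one has $X^{n^{c}} \subseteq X^{n^{c-1}}$, so the image $\bar{y}$ lies in $\bar{X}^{n^{c-1}}$. The induction hypothesis applied to~$\bar{X}$ then yields $\bar{x}$ with $\bar{x}^{\,n} = \bar{y}$; lifting $\bar{x}$ to some $x_{0} \in X$ gives $x_{0}^{\,n} = yz$ for a central element $z \in Z$. Because $z$ is central, $(x_{0}w)^{n} = x_{0}^{\,n} w^{n} = y z w^{n}$ for every $w \in Z$, so the whole problem reduces to finding $w \in Z$ with $w^{n} = z^{-1}$; that is, everything comes down to proving that the central error term satisfies $z \in Z^{n}$.

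Establishing $z \in Z^{n} = \gamma_{c}(X)^{n}$ is the heart of the argument and the step I expect to be the main obstacle. Here I would use two features of the top term of the lower central series: that $Z = \gamma_{c}(X)$ is central, hence abelian, so $Z^{n}$ is exactly the set of $n$\nobreakdash-th powers of its elements; and that the $c$\nobreakdash-fold commutator map $(t_{1}, \dots, t_{c}) \mapsto [t_{1}, \dots, t_{c}]$ is multilinear into~$Z$, whence $[t_{1}^{\,n}, t_{2}, \dots, t_{c}] = [t_{1}, \dots, t_{c}]^{n}$ and $\gamma_{c}(X)^{n}$ is generated by commutators with one $n$\nobreakdash-th-power entry. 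To locate $z$ in this subgroup I would write $y = \prod_{j} (b_{j})^{n}$ with the deep powers $b_{j} = g_{j}^{\,n^{c-1}}$ and expand by the Hall--Petrescu collection formula, so that the discrepancy between $y$ and $(b_{1} \cdots b_{m})^{n}$ is a product of commutators of the~$b_{j}$ weighted by binomial coefficients $\binom{n}{k}$. Since each $b_{j}$ is an $n^{c-1}$\nobreakdash-th power, every weight-$k$ commutator of the~$b_{j}$ is congruent modulo $\gamma_{k+1}(X)$ to the $n^{(c-1)k}$\nobreakdash-th power of the corresponding commutator of the~$g_{j}$, hence carries a surplus factor of~$n$; propagating all corrections down the filtration to the central layer $\gamma_{c}(X)$ and invoking multilinearity there forces the total contribution to~$Z$ to be an $n$\nobreakdash-th power. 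The delicate part is precisely this bookkeeping of the collection corrections: one must verify that after all cancellations the residual element of $\gamma_{c}(X)$ genuinely lies in $\gamma_{c}(X)^{n}$, rather than merely in $X^{n} \cap Z$. Once $z \in Z^{n}$ is secured, choosing $w$ with $w^{n} = z^{-1}$ gives $(x_{0}w)^{n} = y$ and closes the induction.
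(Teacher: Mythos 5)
This proposition is not proved in the paper at all: it is quoted as Lemma~2 of Mal'cev's 1958 article, so there is no in-house argument to compare yours with, and your proposal has to be judged on its own merits. Your general strategy --- induction on the class $c$, passing to $\bar{X} = X/\gamma_{c}(X)$, lifting an $n$\nobreakdash-th root of $\bar{y}$ supplied by the inductive hypothesis, and repairing the central discrepancy $z$ by a central $n$\nobreakdash-th power --- is the standard route to this classical fact, and the reduction to the single claim $z \in \gamma_{c}(X)^{n}$ is set up correctly.

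The difficulty is that the step you yourself call ``the heart of the argument'' is not carried out: you say you \emph{would} expand $y = \prod_{j} b_{j}^{n}$ by Hall--Petrescu and that the corrections \emph{should} accumulate to an element of $\gamma_{c}(X)^{n}$, but you explicitly leave the bookkeeping unverified, and that bookkeeping is where all the content of Mal'cev's lemma lives. Two concrete issues make the gap more than cosmetic. First, the inductive hypothesis hands you \emph{some} root $\bar{x}_{0}$ of $\bar{y}$, and the coset $z\gamma_{c}(X)^{n}$ depends on which root is taken (two roots of $\bar{y}$ differ by elements whose $n$\nobreakdash-th powers need not lie in $\gamma_{c}(X)^{n}$), so the claim $z \in \gamma_{c}(X)^{n}$ must be proved for a specific, explicitly constructed root rather than an arbitrary one. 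Second, the candidate you implicitly have in mind, $x_{0} = b_{1} \cdots b_{m}$, does not obviously qualify: the Hall--Petrescu discrepancy between $\prod_{j} b_{j}^{n}$ and $(b_{1} \cdots b_{m})^{n}$ lies only in $\gamma_{2}(X)$, not in $\gamma_{c}(X)$, so for $c > 2$ this element need not even be a lift of an $n$\nobreakdash-th root of $\bar{y}$, and the two halves of your argument (the inductive lift and the collection expansion) do not splice together as written. Until the descent of the correction terms through the lower central series is actually performed, the proposal is an outline of the right idea rather than a proof.
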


\section{The~classes of~$\mathcal{C}$-bounded abelian, nilpotent, and~solvable groups}\label{se05}

Let $\mathcal{C}$ be a~class of~groups consisting only of~periodic groups, and~let $A$ be an~abelian group. Consider the~following set of~conditions:

$(1)_{\phantom{\mathcal{C}}}$\hspace{1ex}$A$ is of~finite rank;

$(2)_{\mathcal{C}}$\hspace{1ex}an~arbitrary quotient group of~$A$ does not contain $p$\nobreakdash-quasi\-cyclic subgroups for~any $p \in \mathfrak{P}(\mathcal{C})$;

$(3)_{\mathcal{C}}$\hspace{1ex}$A \in \mathcal{C}\mbox{-}w\mathcal{BA}$;

$(4)_{\mathcal{C}}$\hspace{1ex}each primary $\mathfrak{P}(\mathcal{C})$\nobreakdash-com\-po\-nent of~$A$ has a~finite exponent and~a~cardinality not~exceeding the~cardinality of~some $\mathcal{C}$\nobreakdash-group;

$(5)_{\mathcal{C}}$\hspace{1ex}$A \in \mathcal{C}\mbox{-}\mathcal{BA}$.

\begin{eproposition}\label{pe501}
If $\mathcal{C}$ is a~root class of~groups consisting only of~periodic groups and~$A$ is an~abelian group, then the~following statements hold.

\textup{1.}\hspace{1ex}$(2)_{\mathcal{C}} \Rightarrow (1)$.

\textup{2.}\hspace{1ex}$(2)_{\mathcal{C}} \Leftrightarrow (3)_{\mathcal{C}}$.

\textup{3.}\hspace{1ex}$(3)_{\mathcal{C}} \wedge (4)_{\mathcal{C}} \Leftrightarrow (5)_{\mathcal{C}}$.
\end{eproposition}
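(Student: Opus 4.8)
The plan is to prove the three statements in the order given, reusing earlier ones, with everything resting on the fact that the quasicyclic group $\mathbb{Z}(p^{\infty})$ is divisible, hence an injective $\mathbb{Z}$-module. The organizing observation is this: if a subgroup $S$ of an abelian group $G$ admits a surjection onto $\mathbb{Z}(p^{\infty})$, then, by injectivity, that surjection extends to a homomorphism $G \to \mathbb{Z}(p^{\infty})$, which is again onto; hence $\mathbb{Z}(p^{\infty})$ is a quotient of $G$, so $G$ has a quotient containing a $p$-quasicyclic subgroup. Thus $(2)_{\mathcal{C}}$ fails as soon as some subgroup of $A$, or of a quotient of $A$, maps onto $\mathbb{Z}(p^{\infty})$ for a prime $p \in \mathfrak{P}(\mathcal{C})$; recall $\mathfrak{P}(\mathcal{C}) \ne \varnothing$ by Proposition~\ref{pe401}.

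For Statement~1 I argue by contraposition. If $A$ has infinite torsion-free rank, it contains a free abelian subgroup $F$ of countably infinite rank; sending free generators of $F$ to a generating sequence $c_{1}, c_{2}, \ldots$ of $\mathbb{Z}(p^{\infty})$ (where $p c_{1} = 0$, $p c_{n+1} = c_{n}$) yields a surjection $F \to \mathbb{Z}(p^{\infty})$, so by the organizing observation $(2)_{\mathcal{C}}$ fails. For Statement~2, the implication $(3)_{\mathcal{C}} \Rightarrow (2)_{\mathcal{C}}$ is immediate, since a quotient containing $\mathbb{Z}(p^{\infty})$ has a primary $\mathfrak{P}(\mathcal{C})$-component of infinite exponent. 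For $(2)_{\mathcal{C}} \Rightarrow (3)_{\mathcal{C}}$, suppose a quotient $B$ of $A$ has a component $B_{p}$ of infinite exponent, $p \in \mathfrak{P}(\mathcal{C})$. If $B_{p}$ is non-reduced it contains $\mathbb{Z}(p^{\infty})$; otherwise a basic subgroup of $B_{p}$ is an unbounded direct sum $\bigoplus_{i} \mathbb{Z}/p^{k_{i}}$ with $\{k_{i}\}$ unbounded, and mapping the $i$-th generator to $c_{k_{i}}$ gives a surjection onto $\mathbb{Z}(p^{\infty})$. Applying the observation to $B_{p} \le B$ contradicts $(2)_{\mathcal{C}}$.

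For Statement~3 the direction $(5)_{\mathcal{C}} \Rightarrow (3)_{\mathcal{C}} \wedge (4)_{\mathcal{C}}$ is trivial (take $B = A$ for $(4)_{\mathcal{C}}$). For the converse, fix a quotient $B = A/K$ and $p \in \mathfrak{P}(\mathcal{C})$; I must bound $\operatorname{card} B_{p}$. By $(3)_{\mathcal{C}}$ the group $B_{p}$ is bounded, hence pure in $B$ and therefore a direct summand (Prüfer--Baer), so its number of cyclic summands equals $\dim_{\mathbb{F}_{p}} B_{p}/pB_{p} \le \dim_{\mathbb{F}_{p}} B/pB$. As $B/pB$ is a quotient of $A/pA$, this is at most $\dim_{\mathbb{F}_{p}} A/pA$. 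Splitting off the bounded summand $A_{p}$ and noting that its $p$-torsion-free complement contributes at most $r_{0}(A)$ to $\dim_{\mathbb{F}_{p}} A/pA$, where $r_{0}(A)$ is finite by Statements~1 and~2, the number of cyclic summands of $B_{p}$ is at most $r_{0}(A)$ plus the number of cyclic summands of $A_{p}$. If this is finite, $B_{p}$ is a finite $p$-group, so $B_{p} \in \mathcal{C}$ by Proposition~\ref{pe401}; if infinite, it coincides with the summand count of $A_{p}$, and a cardinality count with $(4)_{\mathcal{C}}$ gives $\operatorname{card} B_{p} \le \operatorname{card} A_{p} \le \operatorname{card}(\text{some }\mathcal{C}\text{-group})$. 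Either way $(5)_{\mathcal{C}}$ holds.

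I expect the last step to be the main obstacle: controlling the $p$-rank of the primary component of an \emph{arbitrary} quotient. The essential inputs are that a bounded primary component is pure and hence a direct summand, that $\dim_{\mathbb{F}_{p}}$ of $(\,\cdot\,)/p(\,\cdot\,)$ does not increase under passage to quotients, and that the finite torsion-free rank supplied by Statements~1--2 caps the contribution of the torsion-free part; the finite-component case must be separated out and disposed of via Proposition~\ref{pe401}.
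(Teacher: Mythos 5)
Your proof is correct. For Statements~1 and~2 you travel essentially the paper's road: the paper also maps a free abelian subgroup of infinite rank onto a $p$\nobreakdash-quasi\-cyclic group and extends by divisibility, and for Statement~2 it likewise reduces everything to surjecting an unbounded $p$\nobreakdash-group onto $\mathbb{Z}(p^{\infty})$ --- though it does this via the second Pr\"ufer theorem applied to a countable subgroup together with an embedding into the divisible hull, where you invoke basic subgroups; the two devices are interchangeable here, and yours avoids the countability reduction and the case analysis on elements of infinite height. The genuine divergence is in Statement~3. The paper replaces an arbitrary subgroup $M$ by the finitely generated free abelian group $N$ spanned by a maximal independent subset of $M$, notes that the primary components of $\tau(A/M)$ are quotients of those of $\tau(A/N)$, and then bounds $\tau_{p}(A/N)$ by observing that the kernel of the $q$\nobreakdash-th power endomorphism ($q$ the exponent) lies in $\tau_{p}(A)$ while its image lies in the countable group $N$, after which $(4)_{\mathcal{C}}$ and Proposition~\ref{pe401} finish the count. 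You instead use the monotone invariant $\dim_{\mathbb{F}_{p}}(\,\cdot\,/p\,\cdot\,)$ together with the Kulikov/Pr\"ufer--Baer theorem that a bounded pure subgroup is a direct summand, splitting off $A_{p}$ and charging the complement's contribution to the finite torsion-free rank. Both are sound; the paper's version needs only elementary cardinality bookkeeping, while yours requires the direct-summand theorem but yields the sharper quantitative fact that the $p$\nobreakdash-rank of $B_{p}$ exceeds that of $A_{p}$ by at most $r_{0}(A)$. Two cosmetic remarks: the primary component is pure whether or not it is bounded (boundedness is needed only to conclude it is a summand), and in your final dichotomy ``coincides with'' should read ``is at most'' --- only the inequality between the summand counts of $B_{p}$ and $A_{p}$ is true or needed, and the case distinction is more naturally made on whether $B_{p}$ itself is finite.
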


\begin{proof}
1.\hspace{1ex}Assume that the~rank of~$A$ is infinite. Then $A$ contains a~free abelian subgroup~$B$ of~infinite rank, and~this subgroup can be mapped homomorphically onto~a~$p$\nobreakdash-quasi\-cyclic group for~any prime number~$p$. Because $A$ is abelian, any homomorphism of~$B$ can be extended to~a~homomorphism of~$A$. Since $\mathcal{C}$ contains non-trivial groups, $\mathfrak{P}(\mathcal{C}) \ne \varnothing$. Therefore, $A$ does not satisfy~$(2)_{\mathcal{C}}$.
\smallskip

2.\hspace{1ex}The~sufficiency of~the~statement is obvious; let us verify the~necessity.

Assume that a~quotient group~$B$ of~$A$ has elements $b_{1},\, b_{2},\, b_{3},\, \ldots$ whose orders equal $p,\, p^{2}\kern-2pt{},\, p^{3}\kern-2pt{},\, \ldots$ respectively for~some $p \in \mathfrak{P}(\mathcal{C})$. The~group $N = \operatorname{sgp}\{b_{1}, b_{2}, b_{3}, \ldots\}$ is countable and,~by~the~second Pr\"ufer theorem, either decomposes into~a~direct product of~cyclic subgroups, the~orders of~which can be arbitrarily large, or~contains a~non-trivial element~$b$ of~infinite height. In~the~first case, $N$ can be mapped homomorphically onto~a~$p$\nobreakdash-quasi\-cyclic group, and~hence there exists a~quotient group of~$A$ containing such a~subgroup. Let us show that, in~the~second case, there is also a~homomorphism of~$A$ onto~a~$p$\nobreakdash-quasi\-cyclic group and~thus $A$ does not satisfy~$(2)_{\mathcal{C}}$.

It is well known (see, e.g.,~\cite[Sec.~4.1]{Robinson1996}) that $B$ can be embedded into~a~divisible abelian group~$D$ and~that the~latter can be decomposed into~a~direct product of~quasicyclic groups and~groups isomorphic to~the~additive group of~rational numbers. Since $N$ is a~periodic $p$\nobreakdash-group, this decomposition has $p$\nobreakdash-quasi\-cyclic factors and~there exists a~homomorphism~$\sigma$ of~$D$ onto~one of~these factors mapping~$b$ to~a~non-trivial element. Since the~height of~$b$ is infinite, the~restriction of~$\sigma$ to~$B$ cannot have a~finite image. Therefore, the~composition of~the~natural homomorphism $A \to B$ and~the~specified restriction is the~desired mapping.\smallskip

3.\hspace{1ex}As~above, only the~necessity has to~be proved.

Let $M$ be a~subgroup of~$A$. It~follows from~Statements~1 and~2 that $M$ is of~finite rank. Denote by~$N$ the~subgroup generated by~some maximal linearly independent subset of~$M$. Since all the~elements of~$M/N$ have finite orders, the~torsion subgroup~$\tau(A/M)$ of~$A/M$ is a~homomorphic image of~the~torsion subgroup~$\tau(A/N)$ of~$A/N$. Because $\tau(A/M)$ decomposes into~the~direct product of~its $p$\nobreakdash-power torsion subgroups, each of~these subgroups is also a~homomorphic image of~$\tau(A/N)$ and~hence is a~quotient group of~the~corresponding $p$\nobreakdash-power torsion subgroup of~$A/N$. Therefore, it~suffices to~show that any primary $\mathfrak{P}(\mathcal{C})$\nobreakdash-com\-po\-nent of~$A/N$ has a~cardinality not~exceeding the~cardinality of~some $\mathcal{C}$\nobreakdash-group.

So, let $T/N = \tau_{p}(A/N)$ be a~primary $\mathfrak{P}(\mathcal{C})$\nobreakdash-com\-po\-nent of~$A/N$. Denote by~$q$ the~exponent of~this group, which is finite by~the~condition~$(3)_{\mathcal{C}}$. Let also $\sigma$ be the~endomorphism of~$T$ that raises each of~its elements to~the~power~$q$. Then $T\sigma \leqslant N$ and~the~kernel~$S$ of~$\sigma$ is contained in~the~corresponding primary $\mathfrak{P}(\mathcal{C})$\nobreakdash-com\-po\-nent~$\tau_{p}(A)$ of~$A$. By~the~condition~$(4)_{\mathcal{C}}$, the~cardinality of~$\tau_{p}(A)$ does not exceed the~cardinality~$\mathfrak{c}$ of~some $\mathcal{C}$\nobreakdash-group. As~for~$N$, it is finitely generated and~therefore at~most countable. Hence, if $S$ is infinite, then the~cardinalities of~$T$ and~$T/N$ do not exceed~$\mathfrak{c}$. Otherwise, $T$ is an~extension of~a~finite group by~a~finitely generated one. Therefore, it is finitely generated, the~quotient group~$T/N$ is finite and,~by~Proposition~\ref{pe401}, has an~order that does not exceed the~order of~some $\mathcal{C}$\nobreakdash-group.
\end{proof}

It turns out that, in~some propositions given below and~concerning $\mathcal{C}$\nobreakdash-bound\-ed groups, nilpotency is too strong a~restriction and~can be replaced by~solvability. Therefore, we supplement the~list of~introduced concepts and~call a~solvable group (\emph{weakly}) \emph{$\mathcal{C}$\nobreakdash-bound\-ed} if it has at~least one finite subnormal series with~(weakly) $\mathcal{C}$\nobreakdash-bound\-ed abelian factors. Denote the~classes of~$\mathcal{C}$\nobreakdash-bound\-ed and~weakly $\mathcal{C}$\nobreakdash-bound\-ed solvable groups by~$\mathcal{C}\mbox{-}\mathcal{BS}$ and~$\mathcal{C}\mbox{-}w\mathcal{BS}$ respectively.

\begin{eproposition}\label{pe502}
If $\mathcal{C}$ is a~root class of~groups consisting only of~periodic groups, then the~following statements hold.\medskip\pagebreak

\textup{1.}\hspace{1ex}The~classes $\mathcal{C}\mbox{-}\mathcal{BA}$, $\mathcal{C}\mbox{-}w\mathcal{BA}$, $\mathcal{C}\mbox{-}\mathcal{BN}$, $\mathcal{C}\mbox{-}w\mathcal{BN}$, $\mathcal{C}\mbox{-}\mathcal{BS}$, and~$\mathcal{C}\mbox{-}w\mathcal{BS}$ are closed under taking subgroups, quotient groups, and~finite direct products.

\textup{2.}\hspace{1ex}Let $X$ be an~abelian group. If~$X\kern-1pt{} \in \mathcal{C}\mbox{-}\mathcal{BS}$ \textup{(}$X\kern-1pt{} \in \mathcal{C}\mbox{-}w\mathcal{BS}$\textup{)}, then $X\kern-1pt{} \in \mathcal{C}\mbox{-}\mathcal{BA}$ \textup{(}respectively $X\kern-1pt{} \in \mathcal{C}\mbox{-}w\mathcal{BA}$\textup{)}.
\end{eproposition}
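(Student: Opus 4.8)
The plan is to settle the abelian cases first and then lift them to the nilpotent and solvable classes by routine manipulation of series, so that Statement~2 comes out as the special case of extension-closure for abelian groups. Throughout I would lean on Proposition~\ref{pe501}: it lets me replace membership in $\mathcal{C}\mbox{-}w\mathcal{BA}$ by the handier condition~$(2)_{\mathcal{C}}$ (no quotient contains a $p$-quasicyclic subgroup for $p \in \mathfrak{P}(\mathcal{C})$), and membership in $\mathcal{C}\mbox{-}\mathcal{BA}$ by the conjunction $(2)_{\mathcal{C}} \wedge (4)_{\mathcal{C}}$, where $(4)_{\mathcal{C}}$ constrains only the primary components of the group itself. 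The recurring tool is that a $p$-quasicyclic group~$C_{p^{\infty}}$ is divisible, hence a direct summand of any abelian group containing it.

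For $\mathcal{C}\mbox{-}w\mathcal{BA}$ I would argue entirely through~$(2)_{\mathcal{C}}$. Quotient-closure is immediate, since a quotient of a quotient is a quotient. For subgroup-closure, if $M \leq A$ and a quotient $M/K$ contained a $p$-quasicyclic subgroup $Q/K$, then $Q/K$, being divisible, would be a direct summand of $A/K$, so $A$ would map onto $C_{p^{\infty}}$, contradicting~$(2)_{\mathcal{C}}$ for~$A$. The same device handles finite products: a surjection $A \times B \twoheadrightarrow C_{p^{\infty}}$ would give $\varphi(A) + \varphi(B) = C_{p^{\infty}}$, and since the subgroups of $C_{p^{\infty}}$ form a chain, one of $\varphi(A), \varphi(B)$ is all of $C_{p^{\infty}}$, contradicting~$(2)_{\mathcal{C}}$ for one factor.

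Passing to the full class $\mathcal{C}\mbox{-}\mathcal{BA}$, the part~$(2)_{\mathcal{C}}$ is already in hand, so only~$(4)_{\mathcal{C}}$ remains, i.e.\ a bound on the exponent and cardinality of each primary component. For a subgroup one uses $\tau_{p}(M) = M \cap \tau_{p}(A) \leq \tau_{p}(A)$; for a finite product one uses $\tau_{p}(A \times B) = \tau_{p}(A) \times \tau_{p}(B)$ and bounds its cardinality by that of the product of the two witnessing $\mathcal{C}$-groups, which is again a $\mathcal{C}$-group by closure under extensions. The nilpotent and solvable classes then follow uniformly: writing $\mathcal{A}$ for the abelian factor class, I intersect a given series with a subgroup, push it forward along a quotient map, or (for products) pad two series to equal length and take componentwise products; in each case the new factors are subgroups, quotients, or direct products of the old ones and hence lie in~$\mathcal{A}$. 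The only extra check for the nilpotent classes is that centrality $[X, X_{i}] \leq X_{i-1}$ is preserved, which follows from the commutator inclusions $[M, M \cap X_{i}] \leq [X, X_{i}] \cap M$, $[X/K, X_{i}K/K] = [X, X_{i}]K/K$, and $[X \times Y, X_{i} \times Y_{i}] = [X, X_{i}] \times [Y, Y_{i}]$.

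Finally, Statement~2 reduces to closure of the abelian classes under abelian extensions, after which an induction along the subnormal series yields the claim. Given $Y \leq X$ abelian with $Y, X/Y \in \mathcal{A}$, I again use~$(2)_{\mathcal{C}}$: a surjection $\varphi\colon X \twoheadrightarrow C_{p^{\infty}}$ restricts to $Y$ with image either all of $C_{p^{\infty}}$, a forbidden quotient of~$Y$, or a finite subgroup, in which case $\varphi$ induces a surjection $X/Y \twoheadrightarrow C_{p^{\infty}}/\varphi(Y) \cong C_{p^{\infty}}$, a forbidden quotient of~$X/Y$; either way a contradiction, giving~$(2)_{\mathcal{C}}$ for~$X$. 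For the full class the cardinality bound in $(4)_{\mathcal{C}}$ comes from $|\tau_{p}(X)| \leq |\tau_{p}(Y)| \cdot |\tau_{p}(X/Y)|$, using $\tau_{p}(Y) = \tau_{p}(X) \cap Y$ and $\tau_{p}(X)/\tau_{p}(Y) \hookrightarrow \tau_{p}(X/Y)$. I expect the main obstacle to be precisely this cardinality bookkeeping for $\mathcal{C}\mbox{-}\mathcal{BA}$ and the extension argument of Statement~2, since these are the places where the divisibility/$(2)_{\mathcal{C}}$ reasoning must be combined with careful control of the primary components rather than a bare structural closure property.
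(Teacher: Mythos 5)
Your proposal is correct, and it shares the paper's overall architecture: settle the abelian classes first, then lift to the nilpotent and solvable classes by intersecting a series with a subgroup, pushing it forward along a quotient map, and padding two series to equal length for products. The tactical arguments differ, however, at nearly every step. For subgroup- and quotient-closure of $\mathcal{C}\mbox{-}\mathcal{BA}$ and $\mathcal{C}\mbox{-}w\mathcal{BA}$ the paper argues directly from the definitions, via the single observation that every homomorphic image of $A/B$ is an image of $A$ and every homomorphic image of $B$ embeds into some homomorphic image of $A$ (so the exponent and cardinality constraints on primary components are inherited simultaneously); for products it writes $P/Q$ as an extension of $U/(U \cap Q)$ by a quotient of $V$ and invokes Proposition~\ref{pe501} only for the cardinality part. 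You instead route everything through the equivalences $(2)_{\mathcal{C}} \Leftrightarrow (3)_{\mathcal{C}}$ and $(5)_{\mathcal{C}} \Leftrightarrow (2)_{\mathcal{C}} \wedge (4)_{\mathcal{C}}$ of Proposition~\ref{pe501} together with the structure of the $p$\nobreakdash-quasi\-cyclic group (divisible subgroups split off; proper subgroups form a chain), which is equally valid and makes the product case arguably cleaner, at the price of leaning harder on Proposition~\ref{pe501}. The most substantive divergence is in Statement~2: the paper notes that a primary $\mathfrak{P}(\mathcal{C})$\nobreakdash-com\-po\-nent of a quotient is itself a $\mathcal{C}\mbox{-}(w)\mathcal{BS}$\nobreakdash-group whose subnormal factors are $p$\nobreakdash-groups and hence automatically of finite exponent, while you establish the stronger (and independently useful) fact that $\mathcal{C}\mbox{-}\mathcal{BA}$ and $\mathcal{C}\mbox{-}w\mathcal{BA}$ are closed under abelian extensions and then induct along the series; your extension lemma is sound, since a surjection of $X$ onto a $p$\nobreakdash-quasi\-cyclic group either restricts to a surjection on $Y$ or induces one on $X/Y$ (a $p$\nobreakdash-quasi\-cyclic group modulo a finite subgroup being again $p$\nobreakdash-quasi\-cyclic), and the cardinality bookkeeping via $\tau_{p}(Y) = \tau_{p}(X) \cap Y$ goes through because $\mathcal{C}$ is closed under finite direct products. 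I see no gaps.
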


\begin{proof}
1.\hspace{1ex}If $A$ is an~abelian group and~$B$ is a~subgroup of~$A$, then every homomorphic image of~the~quotient group~$A/B$ is simultaneously a~homomorphic image of~$A$, and~every homomorphic image of~$B$ embeds into~some homomorphic image of~$A$. Hence, if $A$ belongs to~the~class $\mathcal{C}\mbox{-}\mathcal{BA}$ or~$\mathcal{C}\mbox{-}w\mathcal{BA}$, then the~same class contains $B$ and~$A/B$.

Suppose that $U$ and~$V$ are $\mathcal{C}\mbox{-}w\mathcal{BA}$\nobreakdash-groups and~$P$ is their direct product. If~$Q$ is a~subgroup of~$P$, then $P/Q$ is an~extension of~$UQ/Q \cong U/U \cap Q$~by
$$
(P/Q)/(UQ/Q) \cong P/UQ = UV/UW \cong V/W(V \cap U) = V/W,
$$
where $W$ is the~image of~$Q$ under the~canonical projection of~$P$ onto~$V$. It~follows from~the~inclusions $U, V \in \mathcal{C}\mbox{-}w\mathcal{BA}$ that all the~primary $\mathfrak{P}(\mathcal{C})$\nobreakdash-com\-po\-nents of~$U/U \cap Q$ and~$V/W$ are of~finite exponent. Hence, the~primary $\mathfrak{P}(\mathcal{C})$\nobreakdash-com\-po\-nents of~$P/Q$ have the~same property and~therefore $P \in \mathcal{C}\mbox{-}w\mathcal{BA}$. Each primary $\mathfrak{P}(\mathcal{C})$\nobreakdash-com\-po\-nent of~$P$ is the~direct product of~the~corresponding primary $\mathfrak{P}(\mathcal{C})$\nobreakdash-com\-po\-nents of~$U$ and~$V$. Therefore, if $U, V \in \mathcal{C}\mbox{-}\mathcal{BA}$, then $P$ satisfies~$(4)_{\mathcal{C}}$ because $\mathcal{C}$ is closed under taking finite direct products, and~$P \in \mathcal{C}\mbox{-}\mathcal{BA}$ by~Proposition~\ref{pe501}.

Thus, Statement~1 holds for~the~classes $\mathcal{C}\mbox{-}w\mathcal{BA}$ and~$\mathcal{C}\mbox{-}\mathcal{BA}$. Suppose now that $X$ and~$Y$ are nilpotent (solvable) groups,
$$
1 = X_{0} \leqslant X_{1} \leqslant \ldots \leqslant X_{m} = X
\quad\text{and}\quad
1 = Y_{0} \leqslant Y_{1} \leqslant \ldots \leqslant Y_{n} = Y
$$
are their central (subnormal) series, and~$Z$ is a~subgroup of~$X$. Without loss of~generality, we can assume that $m = n$. Then the~subgroups $X_{i} \times Y_{i}$, $X_{i} \cap Z$, and~(if $Z$ is normal in~$X)$ $X_{i}Z/Z$ ($0 \leqslant i \leqslant n$) form central (subnormal) series of~$X \times Y$, $Z$, and~$X/Z$. The~factors of~these series are isomorphic respectively to~the~direct products, subgroups, and~homomorphic images of~$X_{i+1}/X_{i}$ and~$Y_{i+1}/Y_{i}$ ($0 \leqslant i \leqslant n-1$). Therefore, it follows from~the~properties of~the~classes $\mathcal{C}\mbox{-}\mathcal{BA}$ and~$\mathcal{C}\mbox{-}w\mathcal{BA}$ proved above that Statement~1 also holds for~the~classes $\mathcal{C}\mbox{-}\mathcal{BN}$, $\mathcal{C}\mbox{-}\mathcal{BS}$, $\mathcal{C}\mbox{-}w\mathcal{BN}$, and~$\mathcal{C}\mbox{-}w\mathcal{BS}$.
\smallskip

2.\hspace{1ex}Let $Y$ be a~quotient group of~$X$. By~Statement~1, each primary $\mathfrak{P}(\mathcal{C})$\nobreakdash-com\-po\-nent~$T$ of~$Y$ is a~$\mathcal{C}\mbox{-}w\mathcal{BS}$\nobreakdash-group, i.e.,~has a~subnormal series with~$\mathcal{C}\mbox{-}w\mathcal{BA}$\nobreakdash-factors. Any factor~$F$ of~this series is a~$p$\nobreakdash-group for~some $p \in \mathfrak{P}(\mathcal{C})$ and~hence is of~finite exponent. Therefore, the~exponent of~$T$ is also finite and~$X \in \mathcal{C}\mbox{-}w\mathcal{BA}$. If~$X \in \mathcal{C}\mbox{-}\mathcal{BS}$, then $T \in \mathcal{C}\mbox{-}\mathcal{BS}$, $F \in \mathcal{C}\mbox{-}\mathcal{BA}$, and~the~cardinality of~$F$ does not exceed the~cardinality of~some $\mathcal{C}$\nobreakdash-group. The~group~$T$ also enjoys the~last property because $\mathcal{C}$ is closed under taking extensions. Therefore, $X \in \mathcal{C}\mbox{-}\mathcal{BA}$.
\end{proof}

It follows from~Proposition~\ref{pe502}, in~particular, that a~nilpotent (solvable) group is (weakly) $\mathcal{C}$\nobreakdash-bound\-ed if and~only if the~factors of~\emph{any} of~its central (solvable) series are (weakly) $\mathcal{C}$\nobreakdash-bound\-ed abelian groups.

\section{$\mathcal{C}$-regularity and~$\mathcal{C}$-quasiregularity}\label{se06}

Suppose that $\mathcal{C}$ is a~class of~groups and~$X$ is a~group. Let us say that~$X$

--\hspace{1ex}is \emph{$\mathcal{C}$\nobreakdash-quasi\-regular with~respect to~its subgroup~$Y$} if, for~any subgroup $M \in \mathcal{C}^{*}(Y)$, there exists a~subgroup $N \in \mathcal{C}^{*}(X)$ such that $N \cap Y \leqslant M$;

--\hspace{1ex}is \emph{$\mathcal{C}$\nobreakdash-regular with~respect to~its \emph{normal} subgroup~$Y$} if, for~any subgroup $M \in \mathcal{C}^{*}(Y)$ \emph{normal in~$X$}, there exists a~subgroup $N \in \mathcal{C}^{*}(X)$ such that $N \cap Y = M$.

The~introduced concepts often serve as~parts of~conditions sufficient for~a~free construction of~group to~be residually a~$\mathcal{C}$\nobreakdash-group. The~main goal of~this section is to~prove that a~$\mathcal{C}\mbox{-}\mathcal{BN}$\nobreakdash-group has the~properties of~$\mathcal{C}$\nobreakdash-regularity and~$\mathcal{C}$\nobreakdash-quasi\-regularity for~any root class~$\mathcal{C}$ consisting only of~periodic groups.

\begin{eproposition}\label{pe601}
Let $\mathcal{C}$ be a~root class of~groups consisting only of~periodic groups. If~a~periodic $\mathcal{C}\mbox{-}\mathcal{BS}$\nobreakdash-group~$X$ has a~finite exponent, which is a~$\mathfrak{P}(\mathcal{C})$\nobreakdash-num\-ber, then $X \in \mathcal{C}$.
\end{eproposition}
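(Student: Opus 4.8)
The plan is to reduce the assertion to the abelian factors of a subnormal series and then invoke Statement~2 of Proposition~\ref{pe403}. Since $X$ is a $\mathcal{C}\mbox{-}\mathcal{BS}$\nobreakdash-group, it possesses a finite subnormal series $1 = X_{0} \leqslant X_{1} \leqslant \ldots \leqslant X_{n} = X$ whose factors $A_{i} = X_{i}/X_{i-1}$ are $\mathcal{C}\mbox{-}\mathcal{BA}$\nobreakdash-groups. Because a root class is closed under extensions, it suffices to prove that each $A_{i}$ lies in~$\mathcal{C}$; then $X \in \mathcal{C}$ follows by a straightforward induction climbing the series, since $X_{i}$ is an extension of $X_{i-1} \in \mathcal{C}$ by $A_{i} \in \mathcal{C}$.

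Next I would analyze a single factor $A = A_{i}$. The exponent of~$X$ is finite and a~$\mathfrak{P}(\mathcal{C})$\nobreakdash-num\-ber, and the same is therefore true of the exponent of every factor, since raising a coset $xX_{i-1}$ to this exponent annihilates it. Consequently $A$ is the direct product of finitely many of its $p$\nobreakdash-pri\-ma\-ry components $\tau_{p}(A)$, one for each prime~$p$ dividing the exponent, and each such~$p$ belongs to~$\mathfrak{P}(\mathcal{C})$. Thus every $\tau_{p}(A)$ is a~primary $\mathfrak{P}(\mathcal{C})$\nobreakdash-com\-po\-nent of~$A$, and since $A$ is $\mathcal{C}$\nobreakdash-bound\-ed abelian, the definition of~$\mathcal{C}\mbox{-}\mathcal{BA}$ guarantees that $\tau_{p}(A)$ has a finite exponent and a cardinality not exceeding the cardinality~$\mathfrak{c}$ of some $\mathcal{C}$\nobreakdash-group.

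Now each $\tau_{p}(A)$ is a periodic abelian (hence solvable) $\mathfrak{P}(\mathcal{C})$\nobreakdash-group of finite exponent whose cardinality is at most~$\mathfrak{c} \leqslant 2^{\mathfrak{c}}$, so Statement~2 of Proposition~\ref{pe403} yields $\tau_{p}(A) \in \mathcal{C}$ (applied with a non-trivial $\mathcal{C}$\nobreakdash-group witnessing the bound; the trivial case being immediate). Since a root class is closed under finite direct products, the finite product $A = \prod_{p} \tau_{p}(A)$ lies in~$\mathcal{C}$ as well. Applying this to every $A_{i}$ and then invoking closure under extensions along the subnormal series completes the argument.

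I expect the only genuine work to lie in the bookkeeping: verifying that passing to a factor of the series preserves both finiteness of the exponent and its being a~$\mathfrak{P}(\mathcal{C})$\nobreakdash-num\-ber, and confirming that the $\mathcal{C}$\nobreakdash-bound\-ed\-ness of~$A_{i}$ supplies exactly the cardinality bound demanded by Proposition~\ref{pe403}. The substantive content — that a periodic solvable $\mathfrak{P}(\mathcal{C})$\nobreakdash-group of finite exponent and suitably bounded cardinality already belongs to~$\mathcal{C}$ — is packaged in Proposition~\ref{pe403}, so the present proof is essentially an assembly of that fact with the closure properties of a root class.
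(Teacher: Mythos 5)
Your proof is correct and follows essentially the same route as the paper's: both decompose $X$ along a subnormal series with $\mathcal{C}\mbox{-}\mathcal{BA}$ factors, split each factor into its finitely many primary $\mathfrak{P}(\mathcal{C})$\nobreakdash-components (finite in number because the exponent is a finite $\mathfrak{P}(\mathcal{C})$\nobreakdash-number), and feed the resulting exponent and cardinality bounds into Statement~2 of Proposition~\ref{pe403} together with the closure of a root class under extensions. The only difference is cosmetic ordering: the paper first uses closure under extensions to bound the cardinality of $X$ itself and then applies Proposition~\ref{pe403} once to the whole group, whereas you apply it to each primary component and reassemble via finite direct products and the series.
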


\begin{proof}
By~Proposition~\ref{pe502}, an~arbitrary factor~$F$ of~some solvable series of~$X$ belongs to~the~class $\mathcal{C}\mbox{-}\mathcal{BA}$. Since $F$ has a~finite exponent, which is a~$\mathfrak{P}(\mathcal{C})$\nobreakdash-num\-ber, then the~number of~its $p$\nobreakdash-power torsion subgroups is finite, they all correspond to~numbers from~$\mathfrak{P}(\mathcal{C})$ and~therefore have cardinalities not~exceeding the~cardinality of~some $\mathcal{C}$\nobreakdash-group. Hence, the~cardinality of~$X$ also does not exceed the~cardinality of~some $\mathcal{C}$\nobreakdash-group because $\mathcal{C}$ is closed under taking extensions, and~$X \in \mathcal{C}$ by~Proposition~\ref{pe403}.
\end{proof}

\begin{eproposition}\label{pe602}
Suppose that $\mathcal{C}$ is a~root class of~groups consisting only of~periodic groups, $X$ is a~$\mathcal{C}\mbox{-}w\mathcal{BN}$\nobreakdash-group, and~$Y$ is a~periodic subgroup of~$X$. If~the~exponent~$m$ of~$Y$ is finite and~is a~$\mathfrak{P}(\mathcal{C})$\nobreakdash-num\-ber, then $Y \cap X^{m^{n}} = 1$ for~some $n \geqslant 1$.
\end{eproposition}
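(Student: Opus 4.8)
The plan is to reduce the whole question to the $\pi$-torsion subgroup of $X$, where $\pi$ denotes the finite set of primes dividing $m$, and then to invoke the Mal'cev power lemma (Proposition~\ref{pe410}). Since $m$ is a $\mathfrak{P}(\mathcal{C})$-number, every $p \in \pi$ lies in $\mathfrak{P}(\mathcal{C})$. As $Y$ has exponent~$m$, each of its elements has order a $\pi$-number, so $Y$ is contained in the set $T$ of all $\pi$-torsion elements of $X$; because $X$ is nilpotent, $T$ is a (characteristic) subgroup. Since $Y \cap X^{m^{n}} \leqslant T \cap X^{m^{n}}$ for every~$n$, it suffices to produce an~$n \geqslant 1$ with $T \cap X^{m^{n}} = 1$.

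The key step, and the only place where weak $\mathcal{C}$-boundedness really enters, is to show that $T$ has finite exponent. By Proposition~\ref{pe502} the subgroup $T$ again belongs to $\mathcal{C}\mbox{-}w\mathcal{BN}$, hence admits a finite central series whose factors are weakly $\mathcal{C}$-bounded abelian. Each such factor $F$ is a section of the $\pi$-group~$T$, so it is an abelian $\pi$-group; as $\pi$ is finite and $\pi \subseteq \mathfrak{P}(\mathcal{C})$, the group $F$ is the direct sum of its primary $\mathfrak{P}(\mathcal{C})$-components, each of which has finite exponent by the definition of weak $\mathcal{C}$-boundedness. Thus every factor of the series has finite exponent, and a routine induction on the length of the series (if $G/Z$ has exponent dividing $E$ and a central subgroup~$Z$ has exponent dividing~$e$, then $G$ has exponent dividing~$Ee$) shows that $T$ itself has a finite exponent~$m'$, necessarily a $\pi$-number.

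Finally I would extract roots. Let $c$ be the nilpotency class of~$X$. Since every prime dividing~$m'$ divides~$m$, I can choose $s$ so large that $m' \mid m^{s}$, and then set $n = cs$, so that $X^{m^{n}} = X^{(m^{s})^{c}}$. By Proposition~\ref{pe410} every element of $X^{m^{n}}$ is then an $m^{s}$-th power in~$X$. Take any $g \in T \cap X^{m^{n}}$ and write $g = h^{m^{s}}$ with $h \in X$. As $g \in T$ we have $g^{m'} = 1$, whence $h^{m^{s}m'} = 1$ and $h$ has order a $\pi$-number, i.e.\ $h \in T$; consequently $h^{m'} = 1$, and since $m' \mid m^{s}$ this forces $g = h^{m^{s}} = 1$. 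Therefore $T \cap X^{m^{n}} = 1$, and a fortiori $Y \cap X^{m^{n}} = 1$, as required.

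I expect the substantive content to be concentrated entirely in the finite-exponent claim for~$T$; the reduction to~$T$ and the concluding root extraction are formal once that bound is available. The only mild care needed will be in recalling that the $\pi$-torsion elements of a nilpotent group form a subgroup, and in checking the bookkeeping that the primary $\mathfrak{P}(\mathcal{C})$-components of the abelian factors are precisely those indexed by the finite set~$\pi$.
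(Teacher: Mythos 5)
Your argument is correct and follows essentially the same route as the paper: both proofs use a central series together with weak $\mathcal{C}$\nobreakdash-boundedness and the finiteness of the set of primes dividing~$m$ to produce a finite exponent bound for the $\pi$\nobreakdash-torsion (the paper works with the number $q=\prod e_{i,p}$ rather than explicitly forming the torsion subgroup~$T$, but this is only a repackaging), and both then conclude via Mal'cev's root-extraction lemma (Proposition~\ref{pe410}) that any element of $Y\cap X^{m^{n}}$ is a suitable power of a $\pi$\nobreakdash-torsion element and hence trivial.
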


\begin{proof}
Suppose that $\mathfrak{S}$ is the~set of~all the~prime divisors of~$m$~and
$$
1 = X_{0} \leqslant X_{1} \leqslant \ldots \leqslant X_{n} = X
$$
is some central series of~$X$\kern-1pt{}.\kern-1pt{} Since $X\kern-3pt{} \in\kern-1pt{} \mathcal{C}\mbox{-}w\mathcal{BN}$\kern-1pt{} and~$\mathfrak{S}\kern-1pt{} \subseteq\kern-1pt{} \mathfrak{P}(\mathcal{C})$,\kern-1pt{} then,\kern-1pt{} for~any $i\kern-1pt{} \in\nolinebreak\kern-1pt{} \{0, \ldots, n-\nolinebreak1\}$, $p \in \mathfrak{S}$, the~relation $X_{i+1}/X_{i} \in \mathcal{C}\mbox{-}w\mathcal{BA}$ holds and~the~exponent~$e_{i,p}$ of~the~$p$\nobreakdash-power torsion subgroup of~$X_{i+1}/X_{i}$ is finite. Because $\mathfrak{S}$ is also finite, the~product
$$
q = \prod_{\substack{0 \leqslant i \leqslant n-1,\\p \in \mathfrak{S}}} e_{i,p}
$$
is defined.

It is easy to~see that, if $x \in X$ and~the~order of~$x$ is an~$\mathfrak{S}$\nobreakdash-num\-ber, then $x^{q} = 1$. Therefore, the~equation $x^{q} = y$ is not solvable in~$X$ for~any $y \in Y \setminus \{1\}$. But Proposition~\ref{pe410} says that it is solvable in~$X$ for~every $y \in X^{q^{c}}$, where $c$ is the~nilpotency class of~$X$. Hence, $Y \cap X^{q^{c}} = 1$. Since any prime divisor of~$q$ also divides~$m$, then $q|m^{d}$ for~some $d \geqslant 1$ and~the~number $n = cd$ is the~desired~one.
\end{proof}

\begin{eproposition}\label{pe603}
Suppose that $\mathcal{C}$ is a~root class of~groups consisting only of~periodic groups and~$\mathcal{NC}$ is the~class of~all nilpotent $\mathcal{C}$\nobreakdash-groups. Suppose also that $X$ is a~group, $Y$ is a~subgroup of~$X$, and~there exists a~homomorphism~$\sigma$ of~$X$ onto~a~$\mathcal{C}\mbox{-}\mathcal{BN}$\nobreakdash-group that acts injectively on~$Y$. Then the~following statements hold.

\textup{1.}\hspace{1ex}For~any subgroup $M \in \mathcal{C}^{*}(Y)$, there exists a~subgroup $N \in \mathcal{NC}^{*}(X)$ such that $N \cap\nolinebreak Y \leqslant\nolinebreak M$. In~particular, $X$ is $\mathcal{C}$\nobreakdash-quasiregular with~respect~to~$Y$.

\textup{2.}\hspace{1ex}Suppose that $Y$ is normal in~$X$. Then, for~any subgroup $M \in \mathcal{C}^{*}(Y)$ normal in~$X$, there exists a~subgroup $N \in \mathcal{NC}^{*}(X)$ such that $N \cap Y = M$. In~particular, $X$ is $\mathcal{C}$\nobreakdash-regular with~respect~to~$Y$.
\end{eproposition}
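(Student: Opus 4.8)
The plan is to reduce everything to the case $X \in \mathcal{C}\mbox{-}\mathcal{BN}$ and then exploit the power subgroups $X^{m^{n}}$, where $m$ is the exponent of $Y/M$. Since $\sigma$ is injective on~$Y$, a normal subgroup $\overline{N} \trianglelefteq X\sigma$ with $(X\sigma)/\overline{N} \in \mathcal{NC}$ and $\overline{N} \cap Y\sigma \leqslant M\sigma$ pulls back to $N = \overline{N}\sigma^{-1}$ with the required properties (the inclusion $N \cap Y \leqslant M$ uses injectivity on~$Y$), so I may assume $X$ is itself a $\mathcal{C}\mbox{-}\mathcal{BN}$\nobreakdash-group. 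By~Proposition~\ref{pe402} the group $Y/M \in \mathcal{C}$ has a~finite exponent~$m$, a~$\mathfrak{P}(\mathcal{C})$\nobreakdash-number; hence $M$ is $\mathfrak{P}(\mathcal{C})^{\prime}$\nobreakdash-isolated in~$Y$ (a~$\mathfrak{P}(\mathcal{C})^{\prime}$\nobreakdash-root of an~element of~$Y$ has image in~$Y/M$ whose order divides both~$m$ and~a~$\mathfrak{P}(\mathcal{C})^{\prime}$\nobreakdash-number, so it lies in~$M$). Finally, each $X^{m^{n}}$ is normal with $X/X^{m^{n}}$ nilpotent of exponent dividing~$m^{n}$, hence $X/X^{m^{n}} \in \mathcal{NC}$ by~Proposition~\ref{pe601}; thus $X^{m^{n}} \in \mathcal{NC}^{*}(X)$. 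As $\mathcal{NC} \subseteq \mathcal{C}$, the~``in~particular'' assertions on $\mathcal{C}$\nobreakdash-regularity and $\mathcal{C}$\nobreakdash-quasiregularity will follow automatically.

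For~Statement~2, where $M \trianglelefteq X$, I would argue through the~isolator $I = \mathfrak{P}(\mathcal{C})^{\prime}\mbox{-}\mathfrak{Is}(X, M)$. As $M$ is normal and $X$ is locally nilpotent, the~conjugates of~$I$ are the~isolators of~the~conjugates of~$M$, so $I \trianglelefteq X$; moreover $I \cap Y = M$ by~the~isolation of~$M$ in~$Y$ together with~Proposition~\ref{pe409}(1). In~$\widehat{X} = X/I$, which is $\mathcal{C}\mbox{-}\mathcal{BN}$ and $\mathfrak{P}(\mathcal{C})^{\prime}$\nobreakdash-tor\-sion-free, the~image $\widehat{Y} = YI/I \cong Y/M$ is periodic of~exponent~$m$, so Proposition~\ref{pe602} yields $n$ with $\widehat{Y} \cap \widehat{X}^{m^{n}} = 1$. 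Taking $N$ to~be the~full preimage of~$\widehat{X}^{m^{n}}$ gives $X/N \cong \widehat{X}/\widehat{X}^{m^{n}} \in \mathcal{NC}$ and $N \cap Y = M$ ($\supseteq$ since $M \leqslant I \leqslant N$, and $\subseteq$ since an~element of~$N \cap Y$ maps into $\widehat{Y} \cap \widehat{X}^{m^{n}} = 1$, hence into $I \cap Y = M$).

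For~Statement~1 the~subgroup $M$ need not be normal in~$X$, so the~isolator quotient is unavailable and I would instead prove directly that $Y \cap X^{m^{n}} \leqslant M$ for~some~$n$, after which $N = X^{m^{n}}$ works. First I would factor out the~$\mathfrak{P}(\mathcal{C})^{\prime}$\nobreakdash-tor\-sion subgroup~$T$ of~$X$ (characteristic, since $X$ is nilpotent); because the~$\mathfrak{P}(\mathcal{C})^{\prime}$\nobreakdash-tor\-sion of~$Y$ already lies in~$M$ we have $T \cap Y \leqslant M$, and it suffices to~treat the~$\mathfrak{P}(\mathcal{C})^{\prime}$\nobreakdash-tor\-sion-free group $X/T$ (with unchanged quotient $Y/M$). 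Now the~subgroups $Y_{n} = (Y \cap X^{m^{n}})M/M$ form a~descending chain in~$Y/M$; each is a~$\mathfrak{P}(\mathcal{C})$\nobreakdash-group of~exponent dividing~$m$ and of~finite rank (every section of~$X$ has finite rank by~Proposition~\ref{pe501}), hence finite, so the~chain stabilizes. It~remains to~show the~stable value is trivial, i.e.\ $Y \cap \bigcap_{n} MX^{m^{n}} \leqslant M$. The~decisive point is that $\mathcal{C}$\nobreakdash-boundedness forbids nontrivial $m$\nobreakdash-divisible abelian sections: a~$\mathfrak{P}(\mathcal{C})$\nobreakdash-quasicyclic section is excluded by~weak $\mathcal{C}$\nobreakdash-boundedness, and a~$p$\nobreakdash-divisible tor\-sion-free section with~$p \mid m$ would produce a~$p$\nobreakdash-quasicyclic quotient, again excluded. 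Carrying this ``no $m$\nobreakdash-divisibility'' principle through a~central series by~induction on~the~nilpotency class, using Proposition~\ref{pe410} to~realize elements of~$X^{m^{n}}$ as~$m^{n}$\nobreakdash-th powers and Proposition~\ref{pe602} on~the~resulting periodic central section, collapses $\bigcap_{n} MX^{m^{n}}$ onto~$M$ inside~$Y$.

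The~main obstacle is precisely this last step. Because $M$ is only normal in~$Y$, an~element $v \in Y \cap X^{m^{n}}$ is a~product of~$m^{n}$\nobreakdash-th powers of~elements of~$X$ rather than of~$Y$, so one cannot simply project it onto~the~central periodic section and invoke Proposition~\ref{pe602} term by~term. Reducing to~the~$\mathfrak{P}(\mathcal{C})^{\prime}$\nobreakdash-tor\-sion-free case is what makes the~central component of~a~deep power controllable---there $\mathfrak{P}(\mathcal{C})^{\prime}$\nobreakdash-root extraction is unique and the~centre is $\mathfrak{P}(\mathcal{C})^{\prime}$\nobreakdash-isolated by~Proposition~\ref{pe409}(3)---while the~finite-rank, bounded-exponent finiteness of~the~sections~$Y_{n}$ is what converts a~statement about all~$n$ into one valid for~a~single~$n$. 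Once $Y \cap X^{m^{n}} \leqslant M$ is established, the~closure of~$\mathcal{NC}^{*}(X)$ under finite intersections (Proposition~\ref{pe406}) is available if~several normal subgroups must be combined.
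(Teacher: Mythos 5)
Your reduction to the case $X \in \mathcal{C}\mbox{-}\mathcal{BN}$ via $\sigma$ is correct and matches the paper, and your treatment of Statement~2 is sound: passing to $X/I$ (or, more simply, to $X/M$ directly, since $M$ is already normal in~$X$) and applying Propositions~\ref{pe602} and~\ref{pe601} to the power subgroup $\widehat{X}^{m^{n}}$ is essentially the paper's own computation in the normal case, and the intermediate claims ($I \trianglelefteq X$, $I \cap Y = M$, $X/N \in \mathcal{NC}$) all check out.

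The problem is Statement~1, and it sits exactly where you say it does. Having reduced the claim to $Y \cap MX^{m^{n}} \leqslant M$ for some~$n$, you appeal to a ``no $m$\nobreakdash-divisibility principle \dots carried through a central series by induction on the nilpotency class'' and then concede that ``the main obstacle is precisely this last step'': since $M$ is normal only in~$Y$, an element of $Y \cap X^{m^{n}}$ is a product of $m^{n}$\nobreakdash-th powers of elements of~$X$, and neither Proposition~\ref{pe602} nor the quotient-by-$M$ device applies. That concession is accurate --- no argument is actually supplied for the decisive step, so Statement~1 is not proved. The idea you are missing is the paper's induction along the normalizer tower $Y = Y_{0} \leqslant Y_{1} = N_{X}(Y) \leqslant Y_{2} = N_{X}(Y_{1}) \leqslant \ldots \leqslant Y_{n} = X$, which terminates because $X$ is nilpotent: $Y$ is normal in~$Y_{1}$, so the normal-case argument (your Statement~2 machinery, with $Y^{q}$ in place of~$M$ when $M$ itself is not normal) produces $N_{1} \in \mathcal{C}^{*}(Y_{1})$ with $N_{1} \cap Y \leqslant M$, and the inductive hypothesis applied to the pair $(X, Y_{1})$ and the subgroup~$N_{1}$ yields $N \in \mathcal{C}^{*}(X)$ with $N \cap Y_{1} \leqslant N_{1}$, whence $N \cap Y \leqslant M$. (Tracking exponents through that induction shows that your target inclusion $Y \cap X^{m^{n}} \leqslant M$ is in fact true --- but as a consequence of the normalizer-tower argument, not as an independent stepping stone toward it.)
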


\begin{proof}
1.\hspace{1ex}We fix some subgroup $M \in \mathcal{C}^{*}(Y)$ and~assume first that $X \in \mathcal{C}\mbox{-}\mathcal{BN}$. Denote the~normalizer~$N_{X}(Y)$ of~$Y$ in~$X$ by~$Y_{1}$ and~inductively the~subgroup~$N_{X}(Y_{i})$ by~$Y_{i+1}$. It~is well known (see, e.g.,~\cite[Theorem~2.9]{ClementMajewiczZyman2017}) that $Y_{n} = X$ for~some~$n$. Let us argue by~induction~on~$n$.

Suppose that $n = 1$, i.e.,~$Y$ is normal in~$X$, and~denote by~$q$ the~exponent of~the~$\mathcal{C}$\nobreakdash-group $Y/M$, which is finite by~Proposition~\ref{pe402}. Then the~subgroup~$Y^{q}$ is also normal in~$X$ and~$Y^{q} \leqslant M$. The~quotient group $\overline{X} = X/Y^{q}$ belongs to~the~class $\mathcal{C}\mbox{-}\mathcal{BN}$ by~Proposition~\ref{pe502}, and~the~exponent of~the~subgroup $\overline{Y} = Y/Y^{q}$ is a~$\mathfrak{P}(\mathcal{C})$\nobreakdash-num\-ber. Therefore, it follows from~Propositions~\ref{pe601} and~\ref{pe602} \pagebreak that $\overline{Y} \cap \overline{X}^{\,r} = 1$ for~some $\mathfrak{P}(\mathcal{C})$\nobreakdash-num\-ber~$r$ and~$\overline{X}/\overline{X}^{\,r} \in \mathcal{C}$. Let $N$ be the~preimage of~$\overline{X}^{\,r}$ under the~natural homomorphism $X \to \overline{X}$. Then $N \in \mathcal{C}^{*}(X)$ and~$N \cap Y \leqslant Y^{q} \leqslant M$. Thus, $N$ is the~required subgroup.

Suppose now that $n \geqslant 2$. Since $Y$ is normal in~$Y_{1}$, then, as~proved above, there exists a~subgroup $N_{1} \in \mathcal{C}^{*}(Y_{1})$ satisfying the~condition $N_{1} \cap Y \leqslant M$. We apply the~inductive hypothesis to~the~subgroups~$Y_{1}$,~$N_{1}$ and~find a~subgroup $N \in \mathcal{C}^{*}(X)$ such that $N \cap Y_{1} \leqslant N_{1}$. Then $$N \cap Y = N \cap Y_{1} \cap Y \leqslant N_{1} \cap Y \leqslant M$$ and~therefore $N$ is the~desired subgroup.

In~the~general case (i.e.,~if $X$ does not necessarily belong to~$\mathcal{C}\mbox{-}\mathcal{BN}$), it follows from the~above and~the~relations
$$
X\sigma \in \mathcal{C}\mbox{-}\mathcal{BN},\quad
Y\sigma/M\sigma \cong Y/M(Y \cap \ker\sigma) = Y/M \in \mathcal{C}
$$
that there exists a~subgroup $\overline{N} \in \mathcal{C}^{*}(X\sigma)$ satisfying the~condition $\overline{N} \cap Y\sigma \leqslant M\sigma$. Since $X\sigma$ is nilpotent, then $\overline{N} \in \mathcal{NC}^{*}(X\sigma)$. Therefore, the~preimage~$N$ of~$\overline{N}$ under $\sigma$ belongs to~$\mathcal{NC}^{*}(X)$, and~$N \cap Y \leqslant M$ because $Y \cap \ker\sigma = 1$. Hence, $N$ is the~required subgroup.{\parfillskip=0pt\par}
\smallskip

2.\hspace{1ex}If a~subgroup $M \in \mathcal{C}^{*}(Y)$ is normal in~$X$, then $M\sigma$ is normal in~$X\sigma$ and~$X\sigma/M\sigma \in \mathcal{C}\mbox{-}\mathcal{BN}$ by~Proposition~\ref{pe502}. As proved above, $Y\sigma/M\sigma \in \mathcal{C}$. Therefore, we can apply Statement~1 to~the~group $\overline{X} = X\sigma/M\sigma$ and~the~subgroups $\overline{Y} = Y\sigma/M\sigma$ and~$\{1\}$. It~follows that there exists a~subgroup $\overline{N} \in \mathcal{NC}^{*}(\overline{X})$ such that $\overline{N} \cap \overline{Y} = 1$. Denote by~$N$ the~preimage of~$\overline{N}$ under the~composition of~$\sigma$ and~the~natural homomorphism $X\sigma \to \overline{X}$. It~is easy to~see that $N \in \mathcal{NC}^{*}(X)$ and~$N \cap Y = M$. Thus, $N$ is the~desired subgroup.
\end{proof}

\section{Proofs of~Theorem~\ref{te201}--\ref{te204} and~Corollary~\ref{ce203}}\label{se07}

\begin{proof}[\textup{\textbf{Proof of~Theorem~\ref{te201}}}]
1.\hspace{1ex}Assume that $X \notin \mathcal{C}\mbox{-}w\mathcal{BA}$. Then, by~Proposition~\ref{pe501}, $X$~does not satisfy~$(2)_{\mathcal{C}}$ and~hence there exists a~subgroup~$Y$\kern-1.5pt{} of~$X$ such that the~quotient group~$X/Y$ contains a~$p$\nobreakdash-quasi\-cyclic subgroup for~some $p \in \mathfrak{P}(\mathcal{C})$.

Let $T/Y = \{p\}^{\prime}\mbox{-}\mathfrak{Is}(X/Y, 1)$. Since $\{p\}^{\prime}\mbox{-}\mathfrak{Is}(X/Y, 1) = \{p\}^{\prime}\mbox{-}\mathfrak{Rt}(X/Y, 1)$ by~Proposition~\ref{pe409}, then the~quotient group $X/T \cong (X/Y)/(T/Y)$ also contains a~$p$\nobreakdash-quasi\-cyclic subgroup and~therefore is not residually a~$\mathcal{D}$\nobreakdash-group, where $\mathcal{D}$ is the~class of~all periodic $\mathfrak{P}(\mathcal{C})$\nobreakdash-groups of~finite exponent. Because this class is closed under taking quotient groups, it follows from~Proposition~\ref{pe404} that $T$ is not $\mathcal{D}$\nobreakdash-sepa\-ra\-ble in~$X$. Since $\mathcal{C} \subseteq \mathcal{D}$ by~Proposition~\ref{pe402}, then $T$ cannot be $\mathcal{C}$\nobreakdash-sepa\-ra\-ble in~$X$. At~the~same time, the~torsion subgroup of~$X/T$ is a~$p$\nobreakdash-group, so $T$ is $\mathfrak{P}(\mathcal{C})^{\prime}$\nobreakdash-iso\-lat\-ed~in~$X$.
\smallskip

2.\hspace{1ex}Let us choose a~$\mathfrak{P}(\mathcal{C})^{\prime}$\nobreakdash-iso\-lat\-ed subgroup~$Y$ of~$X$ and~show that the~quotient group $A = X/Y$ is residually a~$\mathcal{P}$\nobreakdash-group. Then it will follow from~Proposition~\ref{pe404} that $Y$ is $\mathcal{P}$\nobreakdash-sepa\-ra\-ble in~$X$. To~do this we fix a~non-trivial element~$a$ of~$A$ and~assume first that it belongs to~a~$p$\nobreakdash-power torsion subgroup~$T$ of~$A$ for~some prime number~$p$.

Since $Y$ is $\mathfrak{P}(\mathcal{C})^{\prime}$\nobreakdash-iso\-lat\-ed in~$X$, then $p \in \mathfrak{P}(\mathcal{C})$ and,~by~the~condition~$(3)_{\mathcal{C}}$, the~exponent~$q$ of~$T$ is finite. It is easy to~see that $A^{q} \cap T = 1$ and~hence $aA^{q} \ne 1$. It~remains to~note that, by~the~first Pr\"ufer theorem, the~periodic $p$\nobreakdash-group~$A/A^{q}$ decomposes into~a~direct product of~cyclic subgroups and~therefore is residually $p$\nobreakdash-finite.

If $a$ has a~finite order~$r$ that is not a~power of~a~prime number, the~proof reduces to~the~case considered above. It~is only necessary to~choose some prime divisor~$p$ of~$r$ and~pass to~the~quotient group~$A/\{p\}^{\prime}\mbox{-}\mathfrak{Is}(A, 1)$.

Suppose now that the~order of~$a$ is infinite. By~Proposition~\ref{pe502}, the~quotient group~$A/\langle a\rangle$ also satisfies~$(3)_{\mathcal{C}}$, and~therefore each of~its primary $\mathfrak{P}(\mathcal{C})$\nobreakdash-com\-po\-nents has a~finite exponent. It~follows that, for~any $p \in \mathfrak{P}(\mathcal{C})$, there exists a~number~$s$ such that $a \notin A^{p^{s}}$. Hence, we can use the~same reasoning as~in~the~first case, it~only remains to~note that $\mathfrak{P}(\mathcal{C}) \ne \varnothing$ (because $\mathcal{C}$ is a~root class), and~choose an~arbitrary number $p \in \mathfrak{P}(\mathcal{C})$.

Thus, any $\mathfrak{P}(\mathcal{C})^{\prime}$\nobreakdash-iso\-lat\-ed subgroup of~$X$ is $\mathcal{P}$\nobreakdash-sepa\-ra\-ble. Since $\mathfrak{P}(\mathcal{P}) = \mathfrak{P}(\mathcal{C})$ and~$\mathcal{P} \subseteq \mathcal{C}$ by~Proposition~\ref{pe401}, it follows that $X$ has the~properties $\mathcal{P}\mbox{-}\mathfrak{Sep}$ and~$\mathcal{C}\mbox{-}\mathfrak{Sep}$.
\end{proof}

The~next proposition generalizes Theorem~2.1 from~\cite{Gruenberg1957PLMS}.

\begin{eproposition}\label{pe701}
Let $\mathcal{C}$ be a~root class of~groups consisting only of~periodic groups. A~$\mathcal{C}\mbox{-}\mathcal{BN}$\nobreakdash-group~$X$ is residually a~$\mathcal{C}$\nobreakdash-group if and~only if it has no~$\mathfrak{P}(\mathcal{C})^{\prime}$\nobreakdash-tor\-sion.
\end{eproposition}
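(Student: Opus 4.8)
The plan is to prove the two implications separately, the forward one being essentially formal and the reverse one carrying all the content. \emph{Necessity} is immediate: if $X$ is residually a~$\mathcal{C}$\nobreakdash-group then its trivial subgroup is $\mathcal{C}$\nobreakdash-separable, so by~Proposition~\ref{pe405} it is $\mathfrak{P}(\mathcal{C})^{\prime}$\nobreakdash-isolated, i.e. $X$ has no $\mathfrak{P}(\mathcal{C})^{\prime}$\nobreakdash-torsion. (This direction uses neither nilpotency nor $\mathcal{C}$\nobreakdash-boundedness.)

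For \emph{sufficiency} I assume $X$ has no $\mathfrak{P}(\mathcal{C})^{\prime}$\nobreakdash-torsion and must separate each $x \ne 1$ from~$1$ by a~$\mathcal{C}$\nobreakdash-quotient. The key reduction is the remark that for every $\mathfrak{P}(\mathcal{C})$\nobreakdash-number $m$ the verbal subgroup $X^{m}$ belongs to $\mathcal{C}^{*}(X)$: the quotient $X/X^{m}$ is nilpotent of finite exponent, which is again a~$\mathfrak{P}(\mathcal{C})$\nobreakdash-number, and, being a~quotient of~$X$, it lies in $\mathcal{C}\mbox{-}\mathcal{BN} \subseteq \mathcal{C}\mbox{-}\mathcal{BS}$ by~Proposition~\ref{pe502}, so Proposition~\ref{pe601} places it in~$\mathcal{C}$. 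Hence it is enough to find, for each $x \ne 1$, a~$\mathfrak{P}(\mathcal{C})$\nobreakdash-number $m$ with $x \notin X^{m}$; equivalently, to show that $\bigcap_{m} X^{m} = 1$, the intersection taken over all $\mathfrak{P}(\mathcal{C})$\nobreakdash-numbers.

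I would handle torsion and non-torsion elements separately. Since $X$ has no $\mathfrak{P}(\mathcal{C})^{\prime}$\nobreakdash-torsion, every torsion element has order a~$\mathfrak{P}(\mathcal{C})$\nobreakdash-number; for such an~$x$, applying Proposition~\ref{pe602} to $Y = \langle x\rangle$ (of finite $\mathfrak{P}(\mathcal{C})$\nobreakdash-number exponent $m$) yields $\langle x\rangle \cap X^{m^{n}} = 1$ for some~$n$, whence $x \notin X^{m^{n}}$ with $m^{n}$ a~$\mathfrak{P}(\mathcal{C})$\nobreakdash-number. For $x$ of infinite order I would pass to $X/\tau$, where $\tau$ is the torsion subgroup: this is a~torsion-free $\mathcal{C}\mbox{-}\mathcal{BN}$\nobreakdash-group, the image $\hat{x}$ is nontrivial, and since $(X/\tau)^{m} = X^{m}\tau/\tau$ it suffices to separate $\hat{x}$ there. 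Thus the whole matter reduces to one claim: \emph{a~torsion-free $\mathcal{C}\mbox{-}\mathcal{BN}$\nobreakdash-group has no nontrivial $\mathfrak{P}(\mathcal{C})$-divisible element}, where $x$ is called $\mathfrak{P}(\mathcal{C})$-divisible if it is an~$n$\nobreakdash-th power for every $\mathfrak{P}(\mathcal{C})$\nobreakdash-number $n$. Indeed, by~Mal'cev's Proposition~\ref{pe410} the inclusion $\hat{x} \in \bigcap_{m} X^{m}$ forces $\hat{x}$ to be such an~element, since $n^{c}$ is a~$\mathfrak{P}(\mathcal{C})$\nobreakdash-number for every $\mathfrak{P}(\mathcal{C})$\nobreakdash-number $n$ (here $c$ is the nilpotency class).

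Proving this last claim is the main obstacle, and here I would use the upper central series $1 = \zeta_{0} \leqslant \dots \leqslant \zeta_{c} = X$, whose members are $\mathfrak{P}(\mathcal{C})^{\prime}$\nobreakdash-isolated (in fact isolated, by torsion-freeness) with torsion-free factors according to~Proposition~\ref{pe409}. Given a~nontrivial $\mathfrak{P}(\mathcal{C})$-divisible $x$, take $k$ minimal with $x \in \zeta_{k}$ and pass to $B = \zeta_{k}/\zeta_{k-1}$, which is torsion-free and, being an~abelian quotient of the subgroup $\zeta_{k} \in \mathcal{C}\mbox{-}\mathcal{BN}$, lies in $\mathcal{C}\mbox{-}\mathcal{BA}$ by~Proposition~\ref{pe502}. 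Because $\zeta_{k}$ is isolated, every $\mathfrak{P}(\mathcal{C})$\nobreakdash-root of~$x$ again lies in~$\zeta_{k}$, so the nonzero image $\bar{x} \in B$ remains $\mathfrak{P}(\mathcal{C})$-divisible. Fixing $p \in \mathfrak{P}(\mathcal{C})$ (nonempty, as $\mathcal{C}$ is a~root class) and choosing, by uniqueness of roots in the torsion-free group $B$, a~coherent chain of $p$\nobreakdash-power roots $\bar{x} = p\bar{x}_{1} = p^{2}\bar{x}_{2} = \dots$, the subgroup they generate is a~copy of $\mathbb{Z}[1/p]$ whose image in $B/\langle\bar{x}\rangle$ is isomorphic to the $p$\nobreakdash-quasicyclic group. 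This contradicts condition $(2)_{\mathcal{C}}$, which holds for the $\mathcal{C}\mbox{-}\mathcal{BA}$\nobreakdash-group $B$ by~Proposition~\ref{pe501}, and the proof is complete.
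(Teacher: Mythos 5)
Your argument is correct, but it follows a genuinely different route from the paper's. The paper proves sufficiency by induction on the nilpotency class: it reduces to an element $x$ of the center via Proposition~\ref{pe409}, separates $x$ inside the abelian group $\mathcal{Z}(X)$ using Theorem~\ref{te201}, and then lifts the separating subgroup to all of $X$ by the $\mathcal{C}$\nobreakdash-quasiregularity statement of Proposition~\ref{pe603}. You instead exhibit an explicit separating family, namely the verbal subgroups $X^{m}$ over all $\mathfrak{P}(\mathcal{C})$\nobreakdash-numbers $m$ (correctly placed in $\mathcal{C}^{*}(X)$ via Propositions~\ref{pe502} and~\ref{pe601}), and show its intersection is trivial: Proposition~\ref{pe602} disposes of the torsion elements, and for elements of infinite order you pass to the torsion-free quotient, invoke Mal'cev's Proposition~\ref{pe410} to convert membership in all $X^{m}$ into $\mathfrak{P}(\mathcal{C})$\nobreakdash-divisibility, and kill divisible elements by manufacturing a $p$\nobreakdash-quasicyclic subgroup in a quotient of an upper-central factor, contradicting condition $(2)_{\mathcal{C}}$ via Proposition~\ref{pe501}. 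All the individual steps check out (in particular $X/X^{m}$ really has exponent dividing $m$, the upper-central factors are torsion-free by Proposition~\ref{pe409}, and root uniqueness in a torsion-free abelian group makes your chain of $p$\nobreakdash-power roots coherent), and there is no circularity since none of the cited propositions depend on this one. What your approach buys is constructiveness --- the separating normal subgroups are the concrete verbal subgroups $X^{m}$, in the spirit of Gruenberg's original theorem --- and independence from Proposition~\ref{pe603}; what it costs is that your final sub-claim essentially re-proves, in the torsion-free abelian setting, the quasicyclic obstruction that the paper has already packaged into Theorem~\ref{te201} (indeed, you could shorten your last paragraph by noting that Theorem~\ref{te201} makes the torsion-free $\mathcal{C}\mbox{-}\mathcal{BA}$\nobreakdash-group $B$ residually a finite $\mathfrak{P}(\mathcal{C})$\nobreakdash-group, which immediately excludes nontrivial $\mathfrak{P}(\mathcal{C})$\nobreakdash-divisible elements).
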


\begin{proof}
The~necessity follows from~Proposition~\ref{pe405}. To~prove the~sufficiency, we use induction on~the~nilpotency class of~$X$ and~assume that the~base of~the~induction is the~case when $X = 1$. Let us fix an~element $x \in X \setminus \{1\}$ and~indicate a~subgroup $N \in \mathcal{C}^{*}(X)$ such that $x \notin N$.

If $\mathcal{Z}(X)$ denotes the~center of~$X$, then, by~the~induction hypothesis and~Proposition~\ref{pe409}, $X/\mathcal{Z}(X)$ is residually a~$\mathcal{C}$\nobreakdash-group. Therefore, we can assume that $x \in \mathcal{Z}(X)$. Since $\mathcal{Z}(X) \in \mathcal{C}\mbox{-}\mathcal{BA}$ by~Proposition~\ref{pe502} and~the~trivial subgroup of~$\mathcal{Z}(X)$ is $\mathfrak{P}(\mathcal{C})^{\prime}$\nobreakdash-iso\-lat\-ed, then it follows from~Theorem~\ref{te201} that there exists a~subgroup $Y \in \mathcal{C}^{*}(\mathcal{Z}(X))$ not~containing~$x$. Therefore, the~element $\overline{x} = xY$ of~the~group $\overline{X} = X/Y$ is non-trivial and~has a~finite order, which is a~$\mathfrak{P}(\mathcal{C})$\nobreakdash-num\-ber. By~Propositions~\ref{pe502} and~\ref{pe401}, $\overline{X}$ is again a~$\mathcal{C}\mbox{-}\mathcal{BN}$\nobreakdash-group and~the~cyclic subgroup~$U$ generated by~$\overline{x}$ belongs to~$\mathcal{C}$. Hence, we can apply Proposition~\ref{pe603} to~the~group~$\overline{X}$ and~the~subgroups~$U$,~$\{1\}$, and~find a~subgroup $\overline{N} \in \mathcal{C}^{*}(\overline{X})$ such that $\overline{N} \cap U = 1$. Then $\overline{x} \notin \overline{N}$ and~therefore the~preimage~$N$ of~$\overline{N}$ under the~natural homomorphism $X \to \overline{X}$ is the~required subgroup.
\end{proof}

\begin{proof}[\textup{\textbf{Proof of~Theorem~\ref{te202}}}]
Let us denote for~convenience the~subgroup $\mathfrak{P}(\mathcal{C})^{\prime}\mbox{-}\mathfrak{Is}(X, Y)$ by~$\mathfrak{I}$. The~equality $\mathfrak{I} = \mathfrak{P}(\mathcal{C})^{\prime}\mbox{-}\mathfrak{Rt}(X, Y)$ follows from~Proposition~\ref{pe409}. If~$X$ has no~$\mathfrak{P}(\mathcal{C})^{\prime}$-tor\-sion, then it is residually a~$\mathcal{C}$\nobreakdash-group by~Proposition~\ref{pe701} and~the~nilpotency classes of~$\mathcal{C}\mbox{-}\mathfrak{Cl}(X, Y)$ and~$Y$ coincide by~Proposition~\ref{pe408}. It~follows from~Proposition~\ref{pe405} that $Y \leqslant \mathfrak{I} \leqslant \mathcal{C}\mbox{-}\mathfrak{Cl}(X, Y)$. Therefore, $\mathfrak{I}$ has the~same nilpotency class as~$Y$. Thus, it remains to~verify the~$\mathcal{C}$\nobreakdash-sepa\-ra\-bil\-ity of~$\mathfrak{I}$. As~in~the~proof of~Proposition~\ref{pe603}, we use induction on~the~length~$n$ of~the~sequence of~subgroups
$$
\mathfrak{I} = \mathfrak{I}_{0} \leqslant \mathfrak{I}_{1} \leqslant \ldots \leqslant \mathfrak{I}_{n} = X,
$$
where $\mathfrak{I}_{i+1}$ denotes the~normalizer of~$\mathfrak{I}_{i}$~in~$X$.

If $n = 1$, then $\mathfrak{I}$ is normal in~$X$, $X/\mathfrak{I}$ is residually a~$\mathcal{C}$\nobreakdash-group by~Proposition~\ref{pe701}, and~therefore $\mathfrak{I}$ is $\mathcal{C}$\nobreakdash-sepa\-ra\-ble in~$X$ by~Proposition~\ref{pe404}. Thus, we assume that $n \geqslant 2$, fix an~arbitrary element $x \in X \setminus \mathfrak{I}$, and~show that there exists a~subgroup $N \in \mathcal{C}^{*}(X)$ satisfying the~condition $x \notin \mathfrak{I}N$.

Since the~subgroup $\mathfrak{I} = \mathfrak{I}_{0}$ is $\mathfrak{P}(\mathcal{C})^{\prime}$\nobreakdash-iso\-lat\-ed in~$X$, then $\mathfrak{I}_{1}$ has the~same property by~Proposition~\ref{pe409}. Hence, if $x \notin \mathfrak{I}_{1}$, then, by~the~induction hypothesis, there exists a~subgroup $N \in \mathcal{C}^{*}(X)$ such that $x \notin \mathfrak{I}_{1}N$. It~follows that $x \notin \mathfrak{I}N$ and~therefore $N$ is the~required subgroup.

Let $x \in \mathfrak{I}_{1}$. Since $\mathfrak{I}$ is $\mathfrak{P}(\mathcal{C})^{\prime}$\nobreakdash-iso\-lat\-ed in~$\mathfrak{I}_{1}$, it follows from~the~above that there exists a~subgroup $M \in \mathcal{C}^{*}(\mathfrak{I}_{1})$ satisfying the~condition $x \notin \mathfrak{I}M$. By~applying Proposition~\ref{pe603} to~the~group~$X$ and~the~subgroups~$\mathfrak{I}_{1}$,~$M$, we find a~subgroup $N \in \mathcal{C}^{*}(X)$ such that $N \cap \mathfrak{I}_{1} \leqslant M$. It~is easy to~see that $\mathfrak{I}N \cap \mathfrak{I}_{1} \leqslant \mathfrak{I}M$ and~hence $x \notin \mathfrak{I}N$. Thus, $N$ is the~desired subgroup.
\end{proof}

\begin{proof}[\textup{\textbf{Proof of~Corollary~\ref{ce203}}}]
It follows from~Theorem~\ref{te202} that we only need to~prove the~necessity. Suppose that $X$ is a~tor\-sion-free nilpotent group with~the~property~$\mathcal{C}\mbox{-}\mathfrak{Sep}$,
\begin{equation}
1 = X_{0} \leqslant X_{1} \leqslant \ldots \leqslant X_{c} = X
\end{equation}
is its upper central series, and~$\mathcal{D}$ is the~class of~all periodic $\mathfrak{P}(\mathcal{C})$\nobreakdash-groups \pagebreak of~finite exponent. By~Proposition~\ref{pe402}, $\mathcal{C} \subseteq \mathcal{D}$, and~since $\mathfrak{P}(\mathcal{C}) = \mathfrak{P}(\mathcal{D})$, then $X$ has the~property $\mathcal{D}\mbox{-}\mathfrak{Sep}$. Let us show that all the~factors of~the~series~$(*)$ also have this property.

Indeed, suppose that $i \in \{0, \ldots, c-1\}$, $Y/X_{i}$ is a~$\mathfrak{P}(\mathcal{D})^{\prime}$\nobreakdash-iso\-lat\-ed subgroup of~$X_{i+1}/X_{i}$, and~$x \in X_{i+1} \setminus Y$. Since $X$ is tor\-sion-free, then it follows from~Proposition~\ref{pe409} that all the~factors of~the~series~$(*)$ are isolated in~$X$. Therefore, $Y$ is $\mathfrak{P}(\mathcal{D})^{\prime}$\nobreakdash-iso\-lat\-ed in~$X$ and,~by~the~property~$\mathcal{D}\mbox{-}\mathfrak{Sep}$, is $\mathcal{D}$\nobreakdash-sepa\-ra\-ble in~this group. Hence, there exists a~subgroup $N \in \mathcal{D}^{*}(X)$ satisfying the~condition $x \notin YN$. Then $xX_{i} \notin Y/X_{i} \cdot (NX_{i} \cap X_{i+1})/X_{i}$~and{\parfillskip=0pt
$$
NX_{i} \in \mathcal{D}^{*}(X), \quad 
NX_{i} \cap X_{i+1} \in \mathcal{D}^{*}(X_{i+1}), \quad 
(NX_{i} \cap X_{i+1})/X_{i} \in \mathcal{D}^{*}(X_{i+1}/X_{i})
$$
}because $\mathcal{D}$ is closed under taking subgroups and~quotient groups. Since $x$ is chosen arbitrarily, it follows that $Y/X_{i}$ is $\mathcal{D}$\nobreakdash-sepa\-ra\-ble in~$X_{i+1}/X_{i}$.

Thus, for~each $i \in \{0, \ldots, c-1\}$, the~quotient group~$X_{i+1}/X_{i}$ has the~property~$\mathcal{D}\mbox{-}\mathfrak{Sep}$ and,~by~Theorem~\ref{te201}, satisfies~$(3)_{\mathcal{D}}$. Since $\mathfrak{P}(\mathcal{C}) = \mathfrak{P}(\mathcal{D})$, the~conditions~$(3)_{\mathcal{C}}$ and~$(3)_{\mathcal{D}}$ are equivalent. As~noted above, $X_{i}$ is isolated in~$X$. Therefore, $X_{i+1}/X_{i}$ is tor\-sion-free and,~by~Proposition~\ref{pe501}, satisfies~$(5)_{\mathcal{C}}$. Therefore, $X \in \mathcal{C}\mbox{-}\mathcal{BN}$.
\end{proof}

\begin{proof}[\textup{\textbf{Proof of~Theorem~\ref{te204}}}]
1.\hspace{1ex}Suppose that
$$
x \in \mathcal{NC}\mbox{-}\mathfrak{Cl}(X, Y),\quad
V = \ker\sigma,\quad
U \in \mathcal{C}\mbox{-}\mathcal{BN}_{\mathfrak{P}(\mathcal{C})}^{*}(X),\quad\text{and}\quad
W = U \cap V.
$$
It~follows from~Proposition~\ref{pe502} that the~class $\mathcal{C}\mbox{-}\mathcal{BN}_{\mathfrak{P}(\mathcal{C})}$ is closed under taking subgroups and~finite direct products. So, $W \in \mathcal{C}\mbox{-}\mathcal{BN}_{\mathfrak{P}(\mathcal{C})}^{*}(X)$ by~Proposition~\ref{pe406}.

It is not difficult to~show, by~using Proposition~\ref{pe407}, that
$$
xV \in \mathcal{NC}\mbox{-}\mathfrak{Cl}(X/V,\,YV/V)
\quad\text{and}\quad
xW \in \mathcal{NC}\mbox{-}\mathfrak{Cl}(X/W,\,YW/W).
$$
It~follows from~the~nilpotency of~the~groups~$X/V$,~$X/W$ and~Theorem~\ref{te202} that the~equalities
\begin{gather*}
\mathcal{NC}\text{-}\mathfrak{Cl}(X/V,\,YV/V) = \mathcal{C}\text{-}\mathfrak{Cl}(X/V,\,YV/V) = \mathfrak{P}(\mathcal{C})^{\prime}\text{-}\mathfrak{Rt}(X/V,\,YV/V),\\
\mathcal{NC}\text{-}\mathfrak{Cl}(X/W,\,YW/W) = \mathcal{C}\text{-}\mathfrak{Cl}(X/W,\,YW/W) = \mathfrak{P}(\mathcal{C})^{\prime}\text{-}\mathfrak{Rt}(X/W,\,YW/W)
\end{gather*}
hold. Hence, there exist the~least $\mathfrak{P}(\mathcal{C})^{\prime}$\nobreakdash-num\-bers~$q$ and~$r$ satisfying the~conditions $(xV)^{q} \in\nolinebreak YV/V$ and~$(xW)^{r} \in YW/W$. Let $y,z \in Y$ be elements such that $yV = x^{q}V$ and~$zW = x^{r}W$. It~follows from~the~last equality and~the~inclusion $W \leqslant V$ that $(xV)^{r} = zV \in YV/V$. By~the~choice of~$q$ and~$r$, the~equality $r = qs$ holds for~some $\mathfrak{P}(\mathcal{C})^{\prime}$\nobreakdash-num\-ber~$s$. Hence, $y^{s}V = x^{qs}V = x^{r}V = zV$ and~therefore $y^{-s}z \in Y \cap V$. But~$Y \cap V = 1$, so $z = y^{s}$ and~$(xW)^{qs} = (xW)^{r} = zW = (yW)^{s}$.

Since the~$\mathcal{C}\mbox{-}\mathcal{BN}_{\mathfrak{P}(\mathcal{C})}$\nobreakdash-group~$X/W$ is nilpotent and~has no~$\mathfrak{P}(\mathcal{C})^{\prime}$\nobreakdash-tor\-sion, then, by~Proposition~~\ref{pe409}, the~extraction of~$\mathfrak{P}(\mathcal{C})^{\prime}$\nobreakdash-roots is unique in~this group. Therefore, $(xW)^{q} = yW$ and~$y^{-1}x^{q} \in W \leqslant U$. Since $U$ is chosen arbitrarily and~$X$ is residually a~$\mathcal{C}\mbox{-}\mathcal{BN}_{\mathfrak{P}(\mathcal{C})}$\nobreakdash-group, then $x^{q} = y \in Y$. Thus, the~subgroup $\mathcal{NC}\mbox{-}\mathfrak{Cl}(X, Y)$ coincides with~the~set $\mathfrak{P}(\mathcal{C})^{\prime}\mbox{-}\mathfrak{Rt}(X, Y)$ and~the~subgroup $\mathfrak{I} = \mathfrak{P}(\mathcal{C})^{\prime}\mbox{-}\mathfrak{Is}(X, Y)$. This means, in~particular, that $\mathfrak{I}$ is $\mathcal{NC}$\nobreakdash-sepa\-ra\-ble~in~$X$.

The~group~$X$ is residually a~$\mathcal{C}\mbox{-}\mathcal{BN}_{\mathfrak{P}(\mathcal{C})}$\nobreakdash-group, while a~$\mathcal{C}\mbox{-}\mathcal{BN}_{\mathfrak{P}(\mathcal{C})}$\nobreakdash-group is nilpotent and, by~Proposition~\ref{pe701}, is residually an~$\mathcal{NC}$\nobreakdash-group. Therefore, $X$ is residually an~$\mathcal{NC}$\nobreakdash-group and~so has no~$\mathfrak{P}(\mathcal{C})^{\prime}$\nobreakdash-tor\-sion in~accordance with~Proposition~\ref{pe405}. The~nilpotency classes of~$\mathcal{NC}\mbox{-}\mathfrak{Cl}(X, Y) = \mathfrak{I}$ and~$Y$ coincide by~Proposition~\ref{pe408}.
\smallskip

2.\hspace{1ex}Suppose again that $V = \ker\sigma$, $\mathfrak{I} = \mathfrak{P}(\mathcal{C})^{\prime}\mbox{-}\mathfrak{Is}(X, Y)$, and~$x \in \mathfrak{I} \cap V$. Since $\mathfrak{I} = \mathfrak{P}(\mathcal{C})^{\prime}\mbox{-}\mathfrak{Rt}(X, Y)$, then $x^{q} \in Y$ for~some $\mathfrak{P}(\mathcal{C})^{\prime}$\nobreakdash-num\-ber~$q$. It~follows that $x^{q} \in Y \cap V = 1$ and~$x = 1$ because $X$ has no~$\mathfrak{P}(\mathcal{C})^{\prime}$\nobreakdash-tor\-sion. Hence, $\mathfrak{I} \cap V = 1$, the~subgroup~$\mathfrak{I}$ is embedded into~the~$\mathcal{C}\mbox{-}\mathcal{BN}_{\mathfrak{P}(\mathcal{C})}$\nobreakdash-group~$X/V$ and~therefore itself belongs to~the~class~$\mathcal{C}\mbox{-}\mathcal{BN}_{\mathfrak{P}(\mathcal{C})}$.
\end{proof}

\section{Proof of~Theorems~\ref{te301}--\ref{te306}}\label{se08}

Let us use the~notation introduced in~Section~\ref{se03}.

\begin{eproposition}\label{pe801}
\textup{\cite[Theorem~2]{SokolovTumanova2016SMJ}}
Suppose that $\mathcal{C}$ is an~arbitrary root class of~groups, $P = \langle A * B;\ U \rangle$, and~$U$ is a~retract of~$B$. If~$A$ and~$B$ are residually $\mathcal{C}$\nobreakdash-groups, $U$ is $\mathcal{C}$\nobreakdash-sepa\-ra\-ble in~$A$, and~$A$ is $\mathcal{C}$\nobreakdash-quasi\-regular with~respect to~$U$, then $P$ is residually a~$\mathcal{C}$\nobreakdash-group.
\end{eproposition}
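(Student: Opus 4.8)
The plan is to verify the residual $\mathcal{C}$-ness of $P$ directly: for every $g \in P \setminus \{1\}$ I will produce a homomorphism of~$P$ onto a $\mathcal{C}$-group that does not annihilate~$g$.

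First I would exploit the retract. Writing $\rho \colon B \to U$ for the retraction (so $\rho|_{U} = \mathrm{id}_{U}$) and $K = \ker\rho$, one has the internal decomposition $B = U \ltimes K$ with $K \trianglelefteq B$ and $U \cap K = 1$. Combining $\mathrm{id}_{A}$ with the composite $B \xrightarrow{\rho} U \hookrightarrow A$ (these agree on $U = A \cap B$) yields a retraction $\pi \colon P \to A$ with $\pi|_{A} = \mathrm{id}_{A}$; hence $P = A \ltimes L$, where $L = \ker\pi$ and $A \cap L = 1$. If $\pi(g) \neq 1$, then, since $A$ is residually a $\mathcal{C}$-group, some $\alpha \colon A \to C \in \mathcal{C}$ has $\alpha\pi(g) \neq 1$, and $\alpha\pi$ is the desired map. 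So the essential case is $g \in L$. To handle it, I would read off the structure of~$L$ from the action of~$P$ on the Bass--Serre tree of the amalgam: since $L$ is normal, its intersection with every conjugate of~$A$, and with every conjugate of~$U$, is trivial, while $L \cap B = K$; thus $L$ is a free product of conjugates of~$K$ (together with a free group coming from the quotient graph, which is handled the same way). Consequently $g$ has a reduced form whose syllables lie in finitely many conjugates $K^{a_{1}}, \dots, K^{a_{n}}$ ($a_{i} \in A$) and pull back to finitely many nontrivial elements $k_{1}, \dots, k_{m} \in K$.

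The crux is to assemble an $A$-equivariant $\mathcal{C}$-quotient of~$P$ that survives on~$g$. First I would use residual $\mathcal{C}$-ness of~$B$ and the closure of a root class under finite direct products to choose $\beta \colon B \to C_{B} \in \mathcal{C}$ with $\beta(k_{j}) \neq 1$ for all~$j$; put $\bar{K} = \beta(K) \trianglelefteq C_{B}$ and $\bar{U} = \beta(U)$, so $C_{B} = \bar{U}\bar{K}$. Next I would pin down the $A$-side: set $M = U \cap \ker\beta \in \mathcal{C}^{*}(U)$ and use the $\mathcal{C}$-quasiregularity of~$A$ with respect to~$U$ to get $N_{A} \in \mathcal{C}^{*}(A)$ with $N_{A} \cap U \leq M$, so that $\alpha \colon A \to C_{A} = A/N_{A}$ refines $\beta$ on~$U$ via a surjection $\alpha(U) \twoheadrightarrow \bar{U}$; shrinking $N_{A}$ further by intersecting (Proposition~\ref{pe406}) with subgroups supplied by the $\mathcal{C}$-separability of~$U$ in~$A$, I can also guarantee that the finitely many cosets $a_{i}U$ occupy pairwise distinct $\alpha(U)$-cosets of~$C_{A}$. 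Now form the extension $E = C_{A} \ltimes \mathcal{M}$, where $\mathcal{M}$ is the module co-induced from the $\bar{U}$-action on~$\bar{K}$ along $\alpha(U) \twoheadrightarrow \bar{U}$ and $C_{A}$ permutes the coordinates indexed by $C_{A}/\alpha(U)$. Since $\mathcal{M}$ is a Cartesian product of copies of the $\mathcal{C}$-group~$\bar{K}$ indexed by the $\mathcal{C}$-group~$C_{A}$, it lies in~$\mathcal{C}$, whence $E \in \mathcal{C}$ by closure under extensions. The map $\alpha$ on~$A$ and the induced map on~$B$ (placing $\beta|_{K}$ in the coordinate of the trivial coset, with the twisting prescribed by~$\mathcal{M}$) agree on~$U$ and so define $\Phi \colon P \to E$; by the positional distinctness arranged above, the syllables of~$g$ land in distinct coordinates and none is killed, so $\Phi(g) \neq 1$.

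The main obstacle is precisely this last construction: making $\Phi$ simultaneously a genuine homomorphism on the amalgam (which forces the $U$-compatibility, delivered by quasiregularity), correctly twisted by the $U$-action on~$K$ (which is why one needs the co-induced module rather than a plain permutational wreath product), and injective on the syllable pattern of~$g$ (which is exactly what $\mathcal{C}$-separability of~$U$ in~$A$ guarantees, by keeping distinct coset-positions distinct). The reduction in the first two paragraphs is routine; all the weight of the hypotheses is concentrated in building~$E$ and~$\Phi$.
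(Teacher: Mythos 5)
The paper gives no proof of Proposition~\ref{pe801}; it is quoted from \cite[Theorem~2]{SokolovTumanova2016SMJ}. So I assess your argument on its own terms. Your reduction is sound as far as it goes: the retraction $\rho$ does extend to a retraction $\pi\colon P\to A$, the case $\pi(g)\ne 1$ is immediate, and the kernel $L=\ker\pi$ is indeed the free product $\mathop{\ast}_{aU\in A/U}K^{a}$ of conjugates of $K=\ker\rho$ with representatives in $A$ (the free part is in fact trivial, since the quotient graph is a star). The choices of $\beta$, of $M=U\cap\ker\beta$, and of $N_{A}$ via quasiregularity and separability are all legitimate, and $E=C_{A}\ltimes\mathcal{M}$ does lie in $\mathcal{C}$ with $\Phi\colon P\to E$ a well-defined homomorphism.

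The gap is in the last step. Because $\mathcal{M}$ is a Cartesian product, the images under $\Phi$ of distinct free factors $K^{a_{i}}$, $K^{a_{j}}$ of $L$ are supported on disjoint coordinate blocks and therefore \emph{commute} in $E$. Hence $\Phi$ annihilates the whole Cartesian subgroup of the free product $L$, i.e.\ the kernel of $\mathop{\ast}_{i}K^{a_{i}}\to\prod_{i}K^{a_{i}}$. Concretely, for $1\ne k\in K$ and $a\in A\setminus U$ the commutator $g=[\,k,\ aka^{-1}]$ is a non-trivial element of $P$ (a commutator of non-trivial elements of distinct free factors), yet $\Phi(g)=1$ for \emph{every} homomorphism of the type you construct, no matter how $\beta$, $N_{A}$ are chosen. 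Your sentence ``the syllables of $g$ land in distinct coordinates and none is killed'' is only valid when the reduced form of $g$ visits each factor at most once; a reduced word may revisit factors, and then the per-coordinate products can cancel. This is exactly the point where one cannot avoid Gruenberg's machinery: one must observe that the kernel of $L\to\prod\bar K$-type quotients is a free group, that free groups and free products of residually $\mathcal{C}$-groups are residually $\mathcal{C}$ for a root class, and that the resulting subnormal series $1\leqslant M\leqslant L\leqslant P$ can be refined to a normal subgroup of $P$ with $\mathcal{C}$-quotient using the closure of $\mathcal{C}$ under Cartesian powers indexed by a $\mathcal{C}$-group (equivalently, one reduces to compatible quotients $R\in\mathcal{C}^{*}(A)$, $S\in\mathcal{C}^{*}(B)$ with $R\cap U=S\cap U$ and proves that an amalgam of two $\mathcal{C}$-groups along a retract is residually $\mathcal{C}$). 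As written, your construction proves only that $P$ modulo the Cartesian subgroup of $L$ is residually $\mathcal{C}$.
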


\begin{eproposition}\label{pe802}
\textup{\cite[Theorem~1]{SokolovTumanova2020IVM}}
Suppose that $\mathcal{C}$ is a~root class of~groups closed under taking quotient groups, $P = \langle A * B;\ U \rangle$, $A \ne U \ne B$, and~$U$ is normal in~$A$ and~$B$. Suppose also that
$$
\Omega = \big\{(R,S) \mid R \in \mathcal{C}^{*}(A),\ S \in \mathcal{C}^{*}(B),\ R \cap U = S \cap U\big\}.
$$
If~$\operatorname{Aut}_{P}(U)$ is abelian or~coincide with~$\operatorname{Aut}_{A}(U)$ or~$\operatorname{Aut}_{B}(U)$, then $P$ is residually a~$\mathcal{C}$\nobreakdash-group if and~only~if

\textup{1)}\hspace{1ex}$\bigcap_{(R,S) \in \Omega} R = 1 = \bigcap_{(R,S) \in \Omega} S$;

\textup{2)}\hspace{1ex}$U$ is $\mathcal{C}$\nobreakdash-sepa\-ra\-ble in~$A$ and~$B$.
\end{eproposition}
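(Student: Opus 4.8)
The plan is to tie residual $\mathcal{C}$\nobreakdash-ness of~$P$ to the traces $R\cap U=S\cap U$ of~$\mathcal{C}$\nobreakdash-conor\-mal subgroups on the edge group~$U$, exploiting that $U$ is normal in~$P$ and that $P/U\cong A/U*B/U$. For \emph{necessity}, Condition~1 is almost immediate: given $1\ne a\in A$, pick $N\in\mathcal{C}^{*}(P)$ with $a\notin N$ and set $R=N\cap A$, $S=N\cap B$; since $U\leqslant A$ and $U\leqslant B$, we get $R\cap U=N\cap U=S\cap U$, so $(R,S)\in\Omega$ while $a\notin R$, and as $a$ is arbitrary $\bigcap_{(R,S)\in\Omega}R=1$, symmetrically for the~$S$'s. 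For Condition~2 I would prove that $U$ is $\mathcal{C}$\nobreakdash-sepa\-ra\-ble in~$A$ and~$B$; by~Proposition~\ref{pe404} (using that $\mathcal{C}$ is closed under quotients) this is equivalent to $A/U$ and~$B/U$ being residually $\mathcal{C}$\nobreakdash-groups, and since these are retracts of the free product $P/U$, it suffices to show $P/U$ is residually a~$\mathcal{C}$\nobreakdash-group. I would extract the latter from the residual $\mathcal{C}$\nobreakdash-ness of~$P$ by a~normal-form argument, lifting a~reduced word of~$P/U$ to one of~$P$ and transporting a~separating homomorphism; this is the more technical half of necessity.

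For \emph{sufficiency}, assume Conditions~1 and~2 and fix $1\ne g\in P$. If $g\notin U$, then $gU\ne1$ in $P/U\cong A/U*B/U$; by~Condition~2 and~Proposition~\ref{pe404} the factors $A/U$ and~$B/U$ are residually $\mathcal{C}$, so their free product is residually a~$\mathcal{C}$\nobreakdash-group (classically, for a~root class, following~\cite{Gruenberg1957PLMS}), and a~separating homomorphism of~$P/U$ pulls back. The substantial case is $g\in U\setminus\{1\}$. Here I would use Condition~1 to choose $(R,S)\in\Omega$ with $g\notin R$, put $M=R\cap U=S\cap U$ (a~subgroup normal in both $A$ and~$B$, being an~intersection of two normal subgroups), and pass by the universal property of the amalgam to $\bar P=P/\langle\!\langle R,S\rangle\!\rangle\cong\bar A*_{\bar U}\bar B$, where $\bar A=A/R\in\mathcal{C}$, $\bar B=B/S\in\mathcal{C}$, and $\bar U=U/M$ embeds into~$\bar A$ and so lies in~$\mathcal{C}$; the image~$\bar g$ of~$g$ is a~nontrivial element of~$\bar U$. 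Because $M$ is normal in~$A$ and~$B$, the conjugation actions descend, so the hypothesis on $\operatorname{Aut}_{P}(U)$ passes to the same hypothesis on $\operatorname{Aut}_{\bar P}(\bar U)$, and it remains to separate~$\bar g$ in an~amalgam $\bar A*_{\bar U}\bar B$ of~$\mathcal{C}$\nobreakdash-groups over a~common normal subgroup.

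For this amalgam I would split into two regimes. If $\bar g$ does not centralize~$\bar U$, it is separated already by the action $\bar P\to\operatorname{Aut}_{\bar P}(\bar U)$, whose target lies in~$\mathcal{C}$ in each of the three alternatives of the hypothesis (it is $\bar A/C_{\bar A}(\bar U)$, or $\bar B/C_{\bar B}(\bar U)$, or the product of these two abelian $\mathcal{C}$\nobreakdash-groups, which is an~extension of a~$\mathcal{C}$\nobreakdash-group by a~$\mathcal{C}$\nobreakdash-group). If instead $\bar g$ is central in~$\bar U$, I would peel off the centre: $Z(\bar U)$ is normal in~$\bar P$, abelian and of finite exponent, $\bar P$ acts on it through a~$\mathcal{C}$\nobreakdash-group of automorphisms, and one builds a~$\mathcal{C}$\nobreakdash-quotient of semidirect-product type $Z(\bar U)\rtimes(\text{action})$ that is faithful on the relevant piece of $Z(\bar U)$, then induct on the (bounded) upper central series of~$\bar U$. \textbf{The main obstacle is precisely this central construction}: one must glue the given homomorphisms of~$\bar A$ and~$\bar B$ into a~single $\mathcal{C}$\nobreakdash-group so that they agree on~$\bar U$ and remain injective on the central factor carrying~$\bar g$, and it is here that the hypothesis on $\operatorname{Aut}_{P}(U)$ is indispensable—without commutativity of the two induced automorphism groups, or the containment of one in the other, the pushout $\bar A*_{\bar U}\bar B$ need not admit any $\mathcal{C}$\nobreakdash-quotient faithful on~$\bar U$, which is exactly the obstruction Condition~1 together with this automorphism restriction is designed to remove.
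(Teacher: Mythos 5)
The paper does not prove this statement: Proposition~\ref{pe802} is imported verbatim, with a citation, from \cite[Theorem~1]{SokolovTumanova2020IVM}, so there is no internal proof to compare yours against. Judged on its own terms, your proposal is a reasonable outline whose easy parts are correct --- the necessity of Condition~1 via $R=N\cap A$, $S=N\cap B$ is complete and right, the case $g\notin U$ of sufficiency via $P/U\cong A/U*B/U$ and Gruenberg's theorem on free products is fine, and the reduction of the case $g\in U$ to an amalgam $\bar A*_{\bar U}\bar B$ of $\mathcal{C}$\nobreakdash-groups using Condition~1 is sound --- but the two load-bearing steps are missing.

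First, the necessity of Condition~2. Your plan is to lift a reduced word $\bar w\ne 1$ of $P/U$ to $w\in P\setminus U$ and ``transport a separating homomorphism.'' This does not close: a homomorphism of $P$ onto a $\mathcal{C}$\nobreakdash-group that does not kill $w$ need not separate $w$ from $U$, and separating elements of $P\setminus U$ from $U$ is (via Proposition~\ref{pe404} and the isomorphism $P/U\cong A/U*B/U$) exactly equivalent to the statement you are trying to prove. So as written the argument is circular; a genuinely different mechanism is required here, and it is plausible that this is one of the places where the hypothesis on $\operatorname{Aut}_{P}(U)$ enters.

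Second, the heart of sufficiency --- producing a $\mathcal{C}$\nobreakdash-quotient of $\bar A*_{\bar U}\bar B$ that does not kill a given $\bar g\in Z(\bar U)\setminus\{1\}$ --- is the theorem's actual content, and you explicitly label it ``the main obstacle'' without overcoming it. Your non-central case (separating via $\bar P\to\operatorname{Aut}_{\bar P}(\bar U)$, whose image lies in $\mathcal{C}$ under each of the three alternatives) is fine, but the proposed fallback of ``inducting on the (bounded) upper central series of $\bar U$'' is not available in general: $\bar U$ is merely a subgroup of a $\mathcal{C}$\nobreakdash-group and need not be nilpotent, so its upper central series need not reach $\bar U$, and the dichotomy central/non-central does not by itself set up an induction. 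In short, the proposal correctly identifies where the difficulty lies but does not supply the construction that resolves it, so it cannot be accepted as a proof of the cited theorem.
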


\begin{eproposition}\label{pe803}
\textup{\cite[Theorem~3]{Sokolov2022CA}}
Suppose that $\mathcal{C}$ is a~root class of~groups consisting only of~periodic groups and~closed under taking quotient groups, $G^{*} = \langle G, t;\ t^{-1}Ht = K,\ \varphi \rangle$, $K_{0} = G$, $H_{1} = H$, $K_{1} = K$, and~$H_{i+1} = H_{i} \cap K_{i}$, $K_{i+1} = H_{i+1}\varphi$ for~all $i \geqslant 1$. Suppose also that $G$ is residually a~$\mathcal{C}$\nobreakdash-group, $H$ and~$K$ are proper central subgroups of~$G$, and~$H_{n} = H_{n+1}$ for~some $n \geqslant 1$. If,~for~each $i \in \{0, 1, \ldots, n-1\}$, the~group~$K_{i}$ is $\mathcal{C}$\nobreakdash-regular with~respect to~$H_{i+1}K_{i+1}$ and~the~subgroup $\mathfrak{P}(\mathcal{C})^{\prime}\mbox{-}\mathfrak{Is}(K_{i}, H_{i+1}K_{i+1})$ is $\mathcal{C}$\nobreakdash-sepa\-ra\-ble in~$K_{i}$, then $G^{*}$ is residually a~$\mathcal{C}$\nobreakdash-group if and~only~if

\textup{1)}\hspace{1ex}$H_{n} = K_{n}$;

\textup{2)}\hspace{1ex}the subgroup $E = \operatorname{sgp}\{H_{n}, t\}$ is residually a~$\mathcal{C}$\nobreakdash-group;

\textup{3)}\hspace{1ex}$H$ and~$K$ are $\mathfrak{P}(\mathcal{C})^{\prime}$\nobreakdash-iso\-lat\-ed~in~$G$.
\end{eproposition}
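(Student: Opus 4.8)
The plan is to prove the two implications separately, treating necessity as inheritance of residual $\mathcal{C}$-ness and reserving the essential content for sufficiency. \emph{Necessity.} Assume $G^{*}$ is residually a $\mathcal{C}$-group. Condition~2) is immediate, since $E=\operatorname{sgp}\{H_{n},t\}$ is a subgroup of $G^{*}$ and a root class is closed under subgroups, so residual $\mathcal{C}$-ness passes to subgroups. For condition~3), Proposition~\ref{pe405} applied to the trivial subgroup shows that $G^{*}$ has no $\mathfrak{P}(\mathcal{C})^{\prime}$-torsion; combining this with the conjugacy $t^{-1}Ht=K$ and the centrality of $H,K$, a direct argument via Britton's lemma forces $H$ and $K$ to be $\mathfrak{P}(\mathcal{C})^{\prime}$-isolated in $G$. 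For condition~1), I would use that $\varphi$ is realized as conjugation by $t$, so $K_{i}=t^{-1}H_{i}t$ for every $i$; the stabilization $H_{n}=H_{n+1}=H_{n}\cap K_{n}$ gives $H_{n}\leqslant K_{n}$, hence an ascending chain $H_{n}\leqslant t^{-1}H_{n}t\leqslant t^{-2}H_{n}t^{2}\leqslant\cdots$, and a strict inclusion would yield, again by Britton's lemma, an element annihilated by every homomorphism onto a $\mathcal{C}$-group, contradicting residuality; thus $H_{n}=K_{n}$.

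\emph{Sufficiency.} Now the goal is to separate an arbitrary $g\in G^{*}\setminus\{1\}$ from the identity by a homomorphism onto a $\mathcal{C}$-group. If $g\in G$, residual $\mathcal{C}$-ness of $G$ suffices, so suppose $g$ has nontrivial $t$-length. By condition~1) the stable subgroup $L=H_{n}=K_{n}$ satisfies $t^{-1}Lt=L$, so $E=\operatorname{sgp}\{L,t\}$ is an HNN-extension with $t$-invariant base whose residuality is supplied by condition~2). The core step is to approximate the associating isomorphism $\varphi$ by isomorphisms of $\mathcal{C}$-quotients. Descending the filtration from $i=n-1$ to $i=0$ and invoking, at each level, the hypothesized $\mathcal{C}$-regularity of $K_{i}$ with respect to $H_{i+1}K_{i+1}$ together with the $\mathcal{C}$-separability of $\mathfrak{P}(\mathcal{C})^{\prime}\text{-}\mathfrak{Is}(K_{i},H_{i+1}K_{i+1})$, I would manufacture a normal subgroup $N\in\mathcal{C}^{*}(G)$ that is \emph{$\varphi$-compatible}, meaning $(H\cap N)\varphi=K\cap N$. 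Then $\varphi$ descends to an isomorphism $HN/N\to KN/N$, the quotient $G^{*}/\langle\langle N\rangle\rangle$ is an HNN-extension of the $\mathcal{C}$-group $G/N$ of the same combinatorial type, and I would separate the image of $g$ there, arguing by induction on the stabilization depth with condition~2) furnishing the base case at level~$n$.

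\emph{Main obstacle.} The crux is producing, for a prescribed $g$, a single $\varphi$-compatible subgroup $N\in\mathcal{C}^{*}(G)$ that respects $\varphi$ on all the associated subgroups $H_{i},K_{i}$ entering the normal form of $g$ while keeping $g$ outside the corresponding normal closure. The recursively defined chain $H_{i},K_{i}$ and its stabilization at level $n$ are engineered precisely so that the regularity conditions at successive levels chain into one uniform choice of $N$; verifying that the induced map on $G/N$ is a genuine associating isomorphism---so that Britton's lemma applies in the quotient and the inductive separation goes through---is the technical heart of the proof.
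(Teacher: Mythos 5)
First, a point of reference: the paper does not prove this proposition at all --- it is imported verbatim as Theorem~3 of \cite{Sokolov2022CA}, so there is no internal proof to compare yours against. Judged on its own terms, your text is a plan rather than a proof, and the places where you defer the work are exactly where the content lies. In the necessity direction, conditions 1) and 3) do not follow from ``a direct argument via Britton's lemma'': Britton's lemma only certifies that certain words are non-trivial in $G^{*}$, whereas what you must produce is a non-trivial element killed by \emph{every} homomorphism onto a $\mathcal{C}$-group. The arguments that actually work use the periodicity of $\mathcal{C}$ in an essential way --- for instance, every element of a $\mathcal{C}$-group has finite order which is a $\mathfrak{P}(\mathcal{C})$-number, so raising to a $\mathfrak{P}(\mathcal{C})^{\prime}$-power $q$ is bijective on cyclic subgroups (whence $x\sigma\in\langle x^{q}\sigma\rangle\leqslant H\sigma$ for every homomorphism $\sigma$ onto a $\mathcal{C}$-group), and the image of $t$ has finite order, which collapses the ascending chain $\overline{H_{n}}\leqslant\overline{t}^{\,-1}\overline{H_{n}}\,\overline{t}\leqslant\cdots$ and forces $\overline{H_{n}}=\overline{K_{n}}$ in every such quotient. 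Converting these observations into a specific non-trivial element of $\bigcap_{\sigma}\ker\sigma$ is the actual argument, and you have not supplied it; note also that strictly ascending HNN-extensions such as $BS(1,2)$ \emph{can} be residually finite, so no purely normal-form argument can settle condition~1).

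In the sufficiency direction the gaps are larger. Even the reduction of ``separate $g\in G$ from $1$'' to residual $\mathcal{C}$-ness of $G$ presupposes that a homomorphism of $G$ onto a $\mathcal{C}$-group extends to $G^{*}$, which already requires a $\varphi$-compatible kernel; and after you pass to a quotient HNN-extension $C^{*}$ of a $\mathcal{C}$-group $C=G/N$, Britton's lemma only tells you that the image of $g$ is non-trivial in $C^{*}$, which is not itself a $\mathcal{C}$-group (its stable letter still has infinite order). You must then prove that $C^{*}$, or a suitable further quotient, is residually a $\mathcal{C}$-group --- this is where conditions 1)--3), the chain $H_{i},K_{i}$, and the hypotheses of $\mathcal{C}$-regularity and $\mathcal{C}$-separability of the isolators are actually consumed, and it is precisely the step you label ``the technical heart'' and leave open. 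As it stands, the proposal identifies plausible ingredients but proves neither implication.
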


\begin{eproposition}\label{pe804}
\textup{\cite[Theorem~3]{SokolovTumanova2017MZ}}
Suppose that $\mathcal{C}$ is a~root class of~groups consisting only of~periodic groups and~closed under taking quotient groups, $G^{*} = \langle G, t;\ t^{-1}Ht = K,\ \varphi \rangle$, $G$ is residually a~$\mathcal{C}$\nobreakdash-group, $H$ and~$K$ are proper central infinite cyclic subgroups of~$G$. If~$H \cap K \ne 1$, then $G^{*}$ is residually a~$\mathcal{C}$\nobreakdash-group if and~only~if

\textup{1)}\hspace{1ex}$H/H \cap K$ and~$K/H \cap K$ have the~same order;

\textup{2)}\hspace{1ex}$H$ and~$K$ are $\mathcal{C}$\nobreakdash-sepa\-ra\-ble~in~$G$;

\textup{3)}\hspace{1ex}$\mathcal{C}$ contains a~group of~order~$2$, unless $H \cap K$ lies in~the~center~of~$G^{*}$.
\end{eproposition}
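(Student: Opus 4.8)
The plan is to prove both implications directly from the HNN structure, after translating the hypotheses into information about the ``core'' of~$G^{*}$. Write $H=\langle h\rangle$, $K=\langle k\rangle$, normalise $\varphi$ so that $h\varphi=k$, and let $g$ generate the infinite cyclic group $H\cap K$, so that $g=h^{a}$ and $g=k^{\varepsilon b}$ for positive integers $a=[H:H\cap K]$, $b=[K:H\cap K]$ and a sign $\varepsilon=\pm1$. Raising $t^{-1}ht=k$ to the $a$th power gives $t^{-1}gt=k^{a}$ in~$G^{*}$. From this I read off the three conditions: $H\cap K$ lies in the centre of~$G^{*}$ exactly when $t^{-1}gt=g$, i.e.\ when $a=b$ and $\varepsilon=1$; and when $a=b$, $\varepsilon=-1$ the subgroup $E=\langle g,t\rangle$ satisfies $t^{-1}gt=g^{-1}$, so $E\cong\mathit{BS}(1,-1)$. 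Thus (1) and~(3) describe when the sub-HNN-extension $\langle HK,t\rangle$ over the finitely generated abelian group~$HK$ degenerates to a free abelian group as opposed to the Klein-bottle group, and it is this subgroup that carries the obstructions.

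For necessity, assume $G^{*}$ is residually a $\mathcal{C}$-group and grant (2) for the moment. By Proposition~\ref{pe405} the subgroups $H,K$ are then $\mathfrak{P}(\mathcal{C})^{\prime}$-isolated in~$G$; a divisibility argument follows: if a prime $q\in\mathfrak{P}(\mathcal{C})^{\prime}$ divided~$a$, then $h^{a/q}$ would lie in~$H\cap K$ by isolation of~$K$, contradicting $[H:H\cap K]=a$, so $a$ and~$b$ are $\mathfrak{P}(\mathcal{C})$-numbers. Now for~(1): every $\mathcal{C}$-group has finite exponent by Proposition~\ref{pe402}, so in any $\mathcal{C}$-quotient~$\rho$ the conjugate elements $\rho(g)=\rho(k)^{\varepsilon b}$ and $\rho(k)^{a}=\rho(t)^{-1}\rho(g)\rho(t)$ have equal orders, whence $\gcd(d,a)=\gcd(d,b)$ for $d=\operatorname{ord}\rho(k)$; since $k$ has infinite order and $K$ is separable, $d$ can be taken divisible by an arbitrarily high power of any prescribed prime of $\mathfrak{P}(\mathcal{C})$, forcing $a=b$. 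Finally~(3): if $H\cap K$ is not central in~$G^{*}$, then $E\cong\mathit{BS}(1,-1)$ is a subgroup of the residually-$\mathcal{C}$ group~$G^{*}$, hence is itself residually a $\mathcal{C}$-group, which by the Baumslag--Solitar example of the introduction requires $2\in\mathfrak{P}(\mathcal{C})$. Necessity of~(2) itself I would obtain by restricting $\mathcal{C}$-quotients of~$G^{*}$ to~$G$: since $H=\{x\in G\mid t^{-1}xt\in G\}$ by Britton's lemma, separating $x\in G\setminus H$ from~$H$ amounts to detecting $t^{-1}xt\notin G$ in a $\mathcal{C}$-quotient and pulling it back through $N\cap G\in\mathcal{C}^{*}(G)$ and the induced relation $\bar t^{-1}\bar H\bar t=\bar K\leqslant\bar G$.

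For sufficiency, assume (1)--(3) and take $1\neq w\in G^{*}$. If $w\in G$, residual $\mathcal{C}$-ness of~$G$ suffices; otherwise write $w=g_{0}t^{\varepsilon_{1}}g_{1}\cdots t^{\varepsilon_{m}}g_{m}$ in Britton-reduced form. Using the $\mathcal{C}$-separability of $H$ and~$K$ (condition~(2)) together with Proposition~\ref{pe406}, I would select a single $N\in\mathcal{C}^{*}(G)$ such that every $g_{i}$ that the no-pinch conditions require to avoid $H$ (resp.\ $K$) still avoids $HN$ (resp.\ $KN$), and such that $\varphi(H\cap N)=K\cap N$ with $H\cap N=\langle h^{r}\rangle$, $K\cap N=\langle k^{r}\rangle$ for a common~$r$. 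Condition~(1), i.e.\ $a=b$, guarantees that the images $\bar H,\bar K$ in $\bar G=G/N\in\mathcal{C}$ are finite cyclic of equal order with $\bar\varphi$ an isomorphism, so $\bar G^{*}=\langle\bar G,t;\ t^{-1}\bar H t=\bar K,\ \bar\varphi\rangle$ is well defined and $G^{*}\to\bar G^{*}$ sends $w$ to a Britton-reduced, hence nontrivial, word. It then remains to produce a $\mathcal{C}$-quotient of~$\bar G^{*}$ in which the image of~$w$ survives: this is the classical construction of a finite quotient of an HNN-extension of a finite group with finite associated subgroups, carried out so that the resulting group has order a $\mathfrak{P}(\mathcal{C})$-number and thus lies in~$\mathcal{C}$ by Proposition~\ref{pe401}; condition~(3) is precisely what allows the orientation-reversing core $\langle\bar g,t\rangle$ (where $t^{-1}\bar g t=\bar g^{\varepsilon}$) to embed in such a quotient.

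I expect the sufficiency step to be the main obstacle, and within it the simultaneous control of two demands on~$\bar G$: that the associated subgroups descend to \emph{isomorphic} finite cyclic groups, which is exactly what condition~(1) buys, while the syllables of~$w$ are kept off $\bar H$ and~$\bar K$, which is what condition~(2) buys. As a cross-check and a possible alternative route, one may try to deduce the statement from the general central HNN-criterion of Proposition~\ref{pe803}: in the cyclic case the associated chain stabilises immediately at $H_{2}=K_{2}=H\cap K$, the auxiliary subgroup $E=\langle H\cap K,t\rangle$ there is the $\mathbb{Z}^{2}$- or $\mathit{BS}(1,-1)$-group above, and its output conditions reproduce (1),~(3),~(2) respectively; the gap to be filled on that route is verifying the $i=0$ hypotheses of Proposition~\ref{pe803} --- the $\mathcal{C}$-regularity of~$G$ with respect to~$HK$ and the $\mathcal{C}$-separability of $\mathfrak{P}(\mathcal{C})^{\prime}\mbox{-}\mathfrak{Is}(G,HK)$ --- starting only from the separability of the individual cyclic subgroups $H$ and~$K$.
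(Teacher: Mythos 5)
The paper does not actually prove this statement: Proposition~\ref{pe804} is imported verbatim from \cite[Theorem~3]{SokolovTumanova2017MZ}, so there is no internal proof to compare against, and your attempt has to stand on its own. It does not. The most serious gap is in your necessity argument for condition~(1). From the conjugacy of $\rho(k)^{\varepsilon b}$ and $\rho(k)^{a}$ you correctly deduce $\gcd(d,a)=\gcd(d,b)$ for $d=\operatorname{ord}\rho(k)$, but the claim that ``$d$ can be taken divisible by an arbitrarily high power of any prescribed prime of~$\mathfrak{P}(\mathcal{C})$'' does not follow from residual $\mathcal{C}$-ness (which only gives $d\nmid n$ for prescribed $n$) nor from separability of~$K$ (which concerns quotients of~$G$, not of~$G^{*}$). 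Worse, the conclusion is unreachable by order-counting at all: take $G=\langle c\rangle\cong\mathbb{Z}$, $H=\langle c^{2}\rangle$, $K=\langle c^{3}\rangle$, so $G^{*}=\mathit{BS}(2,3)$ with $a=3$, $b=2$. Your constraint merely forces $d$ to be coprime to~$6$, which is satisfied by an abundance of finite metacyclic quotients; no contradiction with the existence of periodic quotients arises, yet $(1)$ fails and $\mathit{BS}(2,3)$ is indeed not residually~$\mathcal{C}$. Detecting this requires exhibiting a specific nontrivial element (a Meskin/non-Hopfian--type commutator such as $[c,t^{-1}ct]$) that dies in \emph{every} $\mathcal{C}$-quotient, which is a genuinely different idea absent from your sketch.

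The other two directions are also incomplete. For the necessity of~(2), ``detecting $t^{-1}xt\notin G$ in a $\mathcal{C}$-quotient'' is itself a separability assertion about $G$ inside $G^{*}$ and is not supplied by Britton's lemma; for general HNN-extensions residual finiteness does not imply separability of the associated subgroups, so centrality and cyclicity must enter the argument explicitly, and you do not say how. For sufficiency, the two steps you yourself flag as the main obstacle are exactly the ones left unproved: the existence of $N\in\mathcal{C}^{*}(G)$ with $\varphi(H\cap N)=K\cap N$ (the $\mathcal{C}$-regularity of $G$ with respect to $H$ and $K$, which cannot be invoked from Proposition~\ref{pe603} since $G$ is only assumed residually a $\mathcal{C}$-group, not a $\mathcal{C}\mbox{-}\mathcal{BN}$-group), and the passage from $\bar{G}^{*}$ to a $\mathcal{C}$-quotient. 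On the latter, note that $\bar{G}=G/N$ is a $\mathcal{C}$-group but need not be finite (root classes of periodic groups contain infinite groups), so ``the classical construction of a finite quotient of an HNN-extension of a finite group'' does not apply as stated. Your closing suggestion to route the proof through Proposition~\ref{pe803} runs into the same missing regularity and separability hypotheses. In short, the outline has the right shape, but each of the three pivotal steps is either unjustified or, in the case of~(1), based on an argument that provably cannot work.
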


\begin{eproposition}\label{pe805}
\textup{\cite[Theorem~3]{Sokolov2021SMJ2}}
Suppose that $\mathcal{C}$ is a~root class of~groups closed under taking quotient groups~and
$$
\mathcal{G}(\Gamma) = \big(\Gamma, \ G_{v}\ (v \in \mathcal{V}),\ H_{e}\ (e \in \mathcal{E}),\ \varphi_{\varepsilon e}\ (e \in \mathcal{E},\ \varepsilon = \pm 1)\big)
$$
is a~graph of~groups with~central trivially intersecting edge subgroups. If, for~each $v \in \mathcal{V}$, $G_{v}$~is residually a~$\mathcal{C}$\nobreakdash-group, $H_{v}$ is $\mathcal{C}$\nobreakdash-sepa\-ra\-ble in~$G_{v}$, and~the~latter is $\mathcal{C}$\nobreakdash-regular with~respect to~$H_{v}$, then $\pi_{1}(\mathcal{G}(\Gamma))$ is residually a~$\mathcal{C}$\nobreakdash-group.
\end{eproposition}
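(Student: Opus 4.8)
The plan is to prove residual $\mathcal{C}$-ness by separating, for each nontrivial $g\in\pi_{1}(\mathcal{G}(\Gamma))$, the element~$g$ from the identity by a homomorphism onto a $\mathcal{C}$-group, and to build such a homomorphism by an edge-by-edge construction along a spanning tree. First I would reduce to a finite graph: by the normal form theorem of Bass--Serre theory every~$g$ has finite support, so choosing a finite connected subgraph~$\Gamma_{0}$ carrying~$g$ and putting $N_{v}=G_{v}$ for all~$v$ outside~$\Gamma_{0}$ collapses everything beyond~$\Gamma_{0}$; this is compatible with the remaining data precisely because the edge subgroups leaving~$\Gamma_{0}$ may be killed without disturbing the others, thanks to the direct-product decomposition of each~$H_{v}$. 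It therefore suffices to treat a finite~$\Gamma$, presented as the tree product over a spanning tree followed by a sequence of HNN-extensions for the non-tree edges, the base of the induction being a single vertex, where $\pi_{1}=G_{v}$ is residually a $\mathcal{C}$-group by hypothesis.

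The building blocks are single central amalgamations and single central HNN-extensions. For a tree edge I would invoke Proposition~\ref{pe802}: since each edge subgroup $U=H_{e}\varphi_{\varepsilon e}$ is central in its vertex group, it is normal there and $\operatorname{Aut}_{P}(U)$ is trivial, hence abelian, so the proposition applies once conditions~(1) and~(2) are verified. Here the hypotheses on~$H_{v}$ transfer to the individual edge subgroups because $H_{v}$ is the direct product of its edge images: a subgroup $M\in\mathcal{C}^{*}(U)$ normal in~$G_{v}$ extends, via the complementary direct factor of~$U$ in~$H_{v}$, to a subgroup of $\mathcal{C}^{*}(H_{v})$ normal in~$G_{v}$, so $\mathcal{C}$-regularity with respect to~$H_{v}$ yields $\mathcal{C}$-regularity with respect to each~$U$, and likewise $\mathcal{C}$-separability of~$H_{v}$ together with residual $\mathcal{C}$-ness of~$G_{v}$ gives $\mathcal{C}$-separability of each~$U$. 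Condition~(2) is then exactly this separability, and condition~(1) follows from residual $\mathcal{C}$-ness of the two factors once $\mathcal{C}$-regularity is used to match the trace $R\cap U=S\cap U$ across the edge (degenerate edges, where $U$ equals a whole factor, reduce to a trivial retraction). For a non-tree edge I would argue in the same spirit via Propositions~\ref{pe803} and~\ref{pe804}, controlling the associated central subgroups~$H,K$ through the direct-product structure.

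The heart of the matter, and the step I expect to be the main obstacle, is the \emph{propagation} of the three vertex hypotheses through each construction step: after forming an amalgam or HNN-extension along one central edge subgroup, I must show that the enlarged group is still residually a $\mathcal{C}$-group and is still $\mathcal{C}$-separable and $\mathcal{C}$-regular with respect to every edge subgroup to be amalgamated at a later stage. Residual $\mathcal{C}$-ness at each stage is delivered by Propositions~\ref{pe802}--\ref{pe804}, but preserving separability and regularity with respect to the untouched edge subgroups is delicate, since these now sit inside a free construction rather than inside a single vertex group. I would compute the relevant $\mathcal{C}$-closures and traces in the amalgam using Propositions~\ref{pe406} and~\ref{pe407}, and I would lean decisively on the two structural hypotheses: centrality guarantees that every edge subgroup stays central in the growing group, so all relevant traces are automatically normal and extendable, while the trivial-intersection hypothesis guarantees that the traces prescribed on distinct edges meeting a common vertex never conflict and may be chosen independently. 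Isolating, as a reusable lemma, the assertion that a central amalgam or central HNN-extension of groups that are $\mathcal{C}$-regular and $\mathcal{C}$-separable with respect to a central, trivially-intersecting family of subgroups inherits the same properties with respect to the surviving members of that family is the technical core; once it is established, iterating over the edges of~$\Gamma_{0}$ and transporting the resulting separating homomorphism back to~$\Gamma$ completes the proof.
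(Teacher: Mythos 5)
This proposition is quoted verbatim from \cite[Theorem~3]{Sokolov2021SMJ2}; the present paper contains no proof of it, so there is no internal argument to compare yours against. Judged on its own terms, your proposal has a genuine gap: you correctly locate the technical core --- after gluing along one edge, the enlarged group must remain residually a $\mathcal{C}$\nobreakdash-group and remain $\mathcal{C}$\nobreakdash-separable and $\mathcal{C}$\nobreakdash-regular with respect to every edge subgroup still to be amalgamated --- but you only announce that this should be isolated as a reusable lemma; you never prove it. That lemma \emph{is} the theorem, so the proposal as written is a plan rather than a proof.

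Moreover, the iteration you describe would not go through with the tools you invoke. Propositions~\ref{pe802}, \ref{pe803}, and~\ref{pe804} require the amalgamated (respectively associated) subgroups to be normal, indeed central, in the factors being glued. That holds at the very first tree edge, but once a partial amalgam $P_{0}$ has been formed, an edge subgroup attached to a vertex of $P_{0}$ is central only in its original vertex group, not in $P_{0}$ (elements of the other vertex groups do not commute with it); and for a loop the HNN base is the whole tree product, in which the associated subgroups are likewise not central. So at every stage after the first the hypotheses of those propositions fail, and the ``same spirit'' appeal to them is unavailable. The reduction to a finite subgraph has an analogous unproved step: collapsing the vertices outside $\Gamma_{0}$ forces you to quotient each boundary vertex group by a direct factor of its $H_{v}$, and you would then need to show that the quotient still satisfies all three vertex hypotheses with respect to the surviving factors. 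None of these verifications is routine; together they constitute the substance of the cited theorem, which is why it is imported here as an external result rather than derived from the two-factor cases.
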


\begin{eproposition}\label{pe806}
\textup{\cite[Theorem~4]{Sokolov2021SMJ2}}
Suppose that $\mathcal{C}$ is a~root class of~groups closed under taking quotient groups,
$$
\mathcal{G}(\Gamma) = \big(\Gamma, \ G_{v}\ (v \in \mathcal{V}),\ H_{e}\ (e \in \mathcal{E}),\ \varphi_{\varepsilon e}\ (e \in \mathcal{E},\ \varepsilon = \pm 1)\big),
$$
$H_{e}\varphi_{\varepsilon e}$ is a~proper central subgroup of~$G_{e(\varepsilon)}$ for~all $e \in \mathcal{E}$, $\varepsilon = \pm 1$, and~at~least one of~the~following conditions hold:

\textup{1)}\hspace{1ex}$\mathcal{G}(\Gamma)$ is a~finite graph of~groups with~central trivially intersecting edge subgroups and,~for~each $v \in \mathcal{V}$, $G_{v}$~is $\mathcal{C}$\nobreakdash-regular with~respect~to~$H_{v}$;

\textup{2)}\hspace{1ex}$\Gamma$ is a~finite tree and,~for~any $e \in \mathcal{E}$, $\varepsilon = \pm 1$, $G_{e(\varepsilon)}$~is $\mathcal{C}$\nobreakdash-regular with~respect~to~$H_{e}\varphi_{\varepsilon e}$.\pagebreak

Then $\pi_{1}(\mathcal{G}(\Gamma))$ is residually a~$\mathcal{C}$\nobreakdash-group if and~only if all $G_{v}$ \textup{(}$v \in \mathcal{V}$\textup{)} are residually $\mathcal{C}$\nobreakdash-groups and,~for~any $e \in \mathcal{E}$, $\varepsilon = \pm 1$, $H_{e}\varphi_{\varepsilon e}$~is $\mathcal{C}$\nobreakdash-sepa\-ra\-ble~in~$G_{e(\varepsilon)}$.
\end{eproposition}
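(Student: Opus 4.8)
The plan is to prove the two implications of the equivalence separately, distinguishing the two admissible hypothesis sets only where the arguments genuinely diverge: \emph{Case~1}, a finite graph of groups with central, trivially intersecting edge subgroups in which each $G_{v}$ is $\mathcal{C}$-regular with respect to $H_{v}$; and \emph{Case~2}, a finite tree in which each $G_{e(\varepsilon)}$ is $\mathcal{C}$-regular with respect to the individual edge subgroup $H_{e}\varphi_{\varepsilon e}$. Throughout I would work with the Bass--Serre normal form for $\pi_{1}(\mathcal{G}(\Gamma))$ and lean on the already available criteria for single free constructions (Propositions~\ref{pe802}--\ref{pe805}) together with the closure formula of Proposition~\ref{pe407} and the finite-intersection stability of $\mathcal{C}^{*}(X)$ from Proposition~\ref{pe406}.

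For \emph{sufficiency} I assume that every $G_{v}$ is residually a $\mathcal{C}$-group and every $H_{e}\varphi_{\varepsilon e}$ is $\mathcal{C}$-separable in $G_{e(\varepsilon)}$. In Case~1 I would deduce the conclusion from Proposition~\ref{pe805}, whose hypotheses coincide with mine except that it requires the \emph{whole} subgroup $H_{v}$ to be $\mathcal{C}$-separable in $G_{v}$. The single missing ingredient is therefore a lemma upgrading separability of the individual central factors to separability of their direct product $H_{v}=\prod_{e,\varepsilon}H_{e}\varphi_{\varepsilon e}$: given $g\in G_{v}\setminus H_{v}$, for each coordinate I would combine $\mathcal{C}$-separability of that factor with the $\mathcal{C}$-regularity of $G_{v}$ with respect to $H_{v}$ to produce a subgroup of $\mathcal{C}^{*}(G_{v})$ whose trace on $H_{v}$ isolates the offending coordinate, intersect these finitely many subgroups via Proposition~\ref{pe406}, and read off via Proposition~\ref{pe407} that $g\notin H_{v}$ survives in a $\mathcal{C}$-quotient. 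In Case~2 Proposition~\ref{pe805} is unavailable, because the edge subgroups at a vertex need not intersect trivially, so $H_{v}$ is no longer a direct product; here I would induct on the number of edges, delete a leaf vertex $G_{v}$, and write $\pi_{1}(\mathcal{G}(\Gamma))=\langle A*G_{v};\,U\rangle$ with $U=H_{e}\varphi_{\varepsilon e}$ central in $G_{v}$, matching compatible $\mathcal{C}$-quotients of $A$ and of $G_{v}$ along $U$ by means of the regularity of both sides with respect to $U$ and assembling them into a homomorphism onto a $\mathcal{C}$-group that detects a prescribed normal-form element.

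For \emph{necessity} I assume $\pi_{1}(\mathcal{G}(\Gamma))$ is residually a $\mathcal{C}$-group. Each $G_{v}$ embeds into $\pi_{1}(\mathcal{G}(\Gamma))$ by Bass--Serre theory, and since $\mathcal{C}$ is a root class and hence closed under subgroups, every $G_{v}$ inherits residual $\mathcal{C}$-ness. To obtain $\mathcal{C}$-separability of an edge subgroup $U=H_{e}\varphi_{\varepsilon e}$ in $G_{e(\varepsilon)}$ I would exploit the structural identity furnished by the normal form: $U=G_{e(1)}\cap G_{e(-1)}$ when $e$ lies in the tree, and $U=G_{e(\varepsilon)}\cap t_{e}^{-1}G_{e(-\varepsilon)}t_{e}$ when $e$ carries a stable letter. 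For $g\in G_{e(\varepsilon)}\setminus U$ this identity forces $g$ to lie outside the relevant (conjugate of a) neighbouring vertex group, so separating $g$ from $U$ inside $G_{e(\varepsilon)}$ reduces to separating $g$ from that vertex subgroup inside $\pi_{1}(\mathcal{G}(\Gamma))$, after which I would conclude the equivalence.

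\textbf{Main obstacle.} The hard part is precisely this last reduction, namely proving that the vertex subgroups are themselves $\mathcal{C}$-separable in $\pi_{1}(\mathcal{G}(\Gamma))$; this does \emph{not} follow formally from residual $\mathcal{C}$-ness, which yields only separability of the trivial subgroup. Establishing it needs a normal-form argument showing that a coset representative lying outside a vertex group is detected by some $\mathcal{C}$-quotient, and this is exactly where the centrality of the edge subgroups and the $\mathcal{C}$-regularity hypotheses must re-enter. The symmetric difficulties on the sufficiency side—turning separability of each $H_{e}\varphi_{\varepsilon e}$ into separability of the product $H_{v}$, propagating $\mathcal{C}$-regularity through the accumulating tree product in Case~2, and matching quotients across the amalgamation so that the assembled map lands in $\mathcal{C}$ rather than merely in a tree product of $\mathcal{C}$-groups—are the other delicate points, and they are what the regularity assumptions are designed to supply.
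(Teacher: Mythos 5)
This proposition is not proved in the paper at all: it is quoted verbatim from \cite[Theorem~4]{Sokolov2021SMJ2} and used as an external ingredient, so there is no internal proof to compare your attempt against. Judged on its own, your text is a plan with two acknowledged holes rather than a proof, and at least one of the holes is a genuine gap in the sense that the step as you describe it would not go through.

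Concretely, in the necessity direction you reduce the $\mathcal{C}$-separability of $U=H_{e}\varphi_{\varepsilon e}$ in $G_{e(\varepsilon)}$ to the $\mathcal{C}$-separability of the neighbouring \emph{vertex} subgroup in $\pi_{1}(\mathcal{G}(\Gamma))$, and you yourself concede that you cannot establish the latter. This reduction is both unproven and unnecessary: the standard route exploits the centrality of the edge subgroups directly. If $g\in G_{e(\varepsilon)}\setminus U$, pick $b$ in the adjacent vertex group outside the corresponding edge subgroup (possible since that subgroup is proper); the commutator $[g,b]$ is a reduced word of length four in the free construction, hence non-trivial, so some homomorphism $\sigma$ onto a $\mathcal{C}$-group satisfies $[g\sigma,b\sigma]\neq 1$; but every element of $U\sigma$ commutes with $b\sigma$ because $U$ is central on the other side, whence $g\sigma\notin U\sigma$. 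Your formulation never invokes centrality in the necessity direction, which is exactly why you hit the obstacle you describe. On the sufficiency side, the upgrade from separability of each factor $H_{e}\varphi_{\varepsilon e}$ to separability of the direct product $H_{v}$ (needed to feed Proposition~\ref{pe805}), and the inductive matching of compatible quotients along a leaf edge in the tree case, are stated as intentions only; ``isolate the offending coordinate'' is not an argument, since an element outside $H_{v}=\prod_{e,\varepsilon}H_{e}\varphi_{\varepsilon e}$ need not lie outside any single factor, and making this work is precisely where the $\mathcal{C}$-regularity hypothesis has to be deployed in a specific way. As it stands the proposal identifies the right auxiliary results (Propositions~\ref{pe406}, \ref{pe407}, \ref{pe805}) but does not constitute a proof of the cited theorem.
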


The~following proposition allows us to~omit the~requirement for~the~class of~groups to~be closed under taking quotient groups in~the~statements of~certain assertions.

\begin{eproposition}\label{pe807}
Suppose that we know a~necessary or~sufficient condition for~a~group~$X$ to~be residually a~$\mathcal{C}$\nobreakdash-group, where $\mathcal{C}$ is an~arbitrary root class of~groups consisting only of~periodic groups and~closed under taking quotient groups. If~the~parts of~this condition related to~$\mathcal{C}$ depend only on~the~set~$\mathfrak{P}(\mathcal{C})$ and~the~condition~$(5)_{\mathcal{C}}$, then the~same condition holds for~an~arbitrary root class consisting of~periodic groups.
\end{eproposition}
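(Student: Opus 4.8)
The plan is to trap $\mathcal{C}$ between two root classes of periodic groups that are closed under taking quotient groups, say $\mathcal{C}' \subseteq \mathcal{C} \subseteq \mathcal{C}''$, chosen so that $\mathfrak{P}(\mathcal{C}') = \mathfrak{P}(\mathcal{C}) = \mathfrak{P}(\mathcal{C}'')$ and the condition $(5)_{\mathcal{C}}$ agrees with $(5)_{\mathcal{C}'}$ and $(5)_{\mathcal{C}''}$ on every abelian group. Granting such a sandwich, the proposition is immediate. By hypothesis the given condition involves $\mathcal{C}$ only through $\mathfrak{P}(\mathcal{C})$ and $(5)_{\mathcal{C}}$ --- and hence through the derived notions $\mathcal{C}\mbox{-}\mathcal{BA}$, $\mathcal{C}\mbox{-}\mathcal{BN}$, $\mathcal{C}\mbox{-}\mathcal{BN}_{\mathfrak{P}(\mathcal{C})}$, and $\mathfrak{P}(\mathcal{C})'$-isolation built from these --- so both its hypotheses and its conclusion are literally one and the same statement for $\mathcal{C}$, $\mathcal{C}'$, and $\mathcal{C}''$. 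If the known statement asserts that the condition is \emph{necessary}, I would apply it to $\mathcal{C}''$: a group residually a $\mathcal{C}$-group is residually a $\mathcal{C}''$-group because $\mathcal{C} \subseteq \mathcal{C}''$, so the condition follows. If it asserts \emph{sufficiency}, I would apply it to $\mathcal{C}'$: the condition forces the group to be residually a $\mathcal{C}'$-group, whence residually a $\mathcal{C}$-group because $\mathcal{C}' \subseteq \mathcal{C}$.

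To build the sandwich, set $\mathfrak{P} = \mathfrak{P}(\mathcal{C})$ and let $\mathcal{C}''$ consist of all periodic $\mathfrak{P}$-groups of finite exponent whose cardinality does not exceed that of some $\mathcal{C}$-group, with $\mathcal{C}'$ its subclass of solvable members. Each is obviously closed under subgroups and quotient groups and consists of periodic groups; closure under extensions is routine, since finite exponent, the prime set $\mathfrak{P}$, and solvability are extension-stable and the cardinality of an extension is bounded by a product of two admissible cardinalities. The delicate point is closure under the Cartesian products $\prod_{y \in Y} X_y$ with $X, Y$ in the class, where the cardinality is $|X|^{|Y|}$: choosing $\mathcal{C}$-groups $G_1, G_2$ with $|X| \leqslant |G_1|$ and $|Y| \leqslant |G_2|$, the $\mathcal{C}$-group $\prod_{g \in G_2}(G_1)_g$ has cardinality $|G_1|^{|G_2|} \geqslant |X|^{|Y|}$, so the Cartesian power again respects the cardinality bound (and stays solvable of the same derived length for $\mathcal{C}'$). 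Thus $\mathcal{C}'$ and $\mathcal{C}''$ are root classes. Here $\mathcal{C} \subseteq \mathcal{C}''$ by Proposition~\ref{pe402} (a $\mathcal{C}$-group is periodic of finite exponent and is a $\mathfrak{P}$-group), and $\mathcal{C}' \subseteq \mathcal{C}$ by Proposition~\ref{pe403}: a periodic solvable $\mathfrak{P}$-group of finite exponent whose cardinality is at most that of some $\mathcal{C}$-group of cardinality $\mathfrak{c}_0$ lies in $\mathcal{C}$, because its cardinality does not exceed $2^{\mathfrak{c}_0}$.

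Finally I would check that the three classes share the two relevant invariants. The primes agree, since every member of $\mathcal{C}'$ and $\mathcal{C}''$ is a $\mathfrak{P}$-group while each $p \in \mathfrak{P}$ is already realized in $\mathcal{C}'$ by the cyclic group of order $p$ (which lies in $\mathcal{C}$ by Proposition~\ref{pe401}). For $(5)$ it suffices that $\mathcal{C}$, $\mathcal{C}'$, $\mathcal{C}''$ realize the same cardinalities up to domination: the clause ``does not exceed the cardinality of some $\mathcal{C}''$-group'' coincides with the same clause for $\mathcal{C}$ by the very definition of $\mathcal{C}''$, and for $\mathcal{C}'$ one notes that every cardinality occurring in $\mathcal{C}$ is dominated by that of a suitable elementary abelian $\mathfrak{P}$-group, which is solvable and hence lies in $\mathcal{C}'$ (using Proposition~\ref{pe401} for the finite case). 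Consequently $(5)_{\mathcal{C}}$, $(5)_{\mathcal{C}'}$, $(5)_{\mathcal{C}''}$ define the same predicate, and so do $\mathcal{C}\mbox{-}\mathcal{BA}$, $\mathcal{C}\mbox{-}\mathcal{BN}$, and $\mathcal{C}\mbox{-}\mathcal{BN}_{\mathfrak{P}(\mathcal{C})}$, which is exactly what the first paragraph needs. I expect the main obstacle to be the root-class verification for $\mathcal{C}'$ and $\mathcal{C}''$ --- above all their closure under Cartesian products, which is precisely where the closure of $\mathcal{C}$ itself under such products is exploited --- together with the inclusion $\mathcal{C}' \subseteq \mathcal{C}$ extracted from Proposition~\ref{pe403}.
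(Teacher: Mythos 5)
Your proposal is correct and follows essentially the same route as the paper: the author also sandwiches $\mathcal{C}$ between a smaller root class $\mathcal{C}_{1}$ (solvable periodic $\mathfrak{P}(\mathcal{C})$\nobreakdash-groups of finite exponent with the cardinality bound, identical to your $\mathcal{C}'$) and a larger one closed under quotients, checks that all three share $\mathfrak{P}$ and $(5)$, and applies necessity to the larger class and sufficiency to the smaller. The only cosmetic difference is that the paper takes the upper class to be the closure of $\mathcal{C}$ under the operations $\mathbb{S}$, $\mathbb{E}$, $\mathbb{D}$, $\mathbb{F}$ rather than your explicitly described $\mathcal{C}''$; both serve the same purpose.
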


\begin{proof}
By~the~definition, a~class of~groups is root if and~only if it is closed under the~operations of~taking a~subgroup~($\mathbb{S}$), an~extension~($\mathbb{E}$), and~a~Cartesian degree~($\mathbb{D}$). In~addition to~them, we also consider the~operation of~taking a~quotient group~($\mathbb{F}$).

Suppose that $\mathcal{D}$ is the~class of~all periodic $\mathfrak{P}(\mathcal{C})$\nobreakdash-groups of~finite exponent, $\mathcal{SD}$ is the~class of~solvable $\mathcal{D}$\nobreakdash-groups, and~$\mathcal{C}_{1}$ is a~subclass of~$\mathcal{SD}$, in~which the~cardinality of~each group does not exceed the~cardinality of~some $\mathcal{C}$\nobreakdash-group (not~necessarily the~same for~all groups from~$\mathcal{C}_{1})$. Suppose also that $\mathcal{C}_{2}$ is the~class of~groups obtained from~$\mathcal{C}$\nobreakdash-groups by~a~finite number of~the~operations $\mathbb{S}$, $\mathbb{E}$, $\mathbb{D}$, and~$\mathbb{F}$. It~is clear that $\mathcal{C}_{2}$ is closed under the~listed operations. Since $\mathcal{C}$ is closed under $\mathbb{S}$, $\mathbb{E}$, and~$\mathbb{D}$, while $\mathcal{D}$ and~$\mathcal{SD}$ are also closed under~$\mathbb{F}$, which does not increase the~cardinality of~a~group, then $\mathcal{C}_{1}$ is closed under all four operations.

Obviously, $\mathcal{C} \subseteq \mathcal{C}_{2}$ and~$\mathfrak{P}(\mathcal{C}_{1}) = \mathfrak{P}(\mathcal{C})$. Since $\mathcal{D}$ is closed under $\mathbb{F}$ and~$\mathcal{C} \subseteq \mathcal{D}$ by~Proposition~\ref{pe402}, then $\mathcal{C} \subseteq \mathcal{C}_{2} \subseteq \mathcal{D}$ and~therefore $\mathfrak{P}(\mathcal{C}) \subseteq \mathfrak{P}(\mathcal{C}_{2}) \subseteq \mathfrak{P}(\mathcal{D}) = \mathfrak{P}(\mathcal{C})$. It~follows from~the~inclusion $\mathcal{C}_{1} \subseteq \mathcal{SD} \cap \mathcal{C}\mbox{-}\mathcal{BS}$ and~Proposition~\ref{pe601} that $\mathcal{C}_{1} \subseteq \mathcal{C}$. Thus, $\mathfrak{P}(\mathcal{C}_{1}) = \mathfrak{P}(\mathcal{C}) = \mathfrak{P}(\mathcal{C}_{2})$ and~$\mathcal{C}_{1} \subseteq \mathcal{C} \subseteq \mathcal{C}_{2}$, whence $(5)_{\mathcal{C}_{1}} \Rightarrow (5)_{\mathcal{C}} \Rightarrow (5)_{\mathcal{C}_{2}}$. Since the~operation~$\mathbb{F}$ does not increase the~cardinality, for~each $\mathcal{C}_{2}$\nobreakdash-group~$X$, there exists a~$\mathcal{C}$\nobreakdash-group~$Y$ whose cardinality is not less than the~cardinality of~$X$. Hence, $(5)_{\mathcal{C}} \Leftrightarrow (5)_{\mathcal{C}_{2}}$. Let us show that a~similar statement holds for~the~classes~$\mathcal{C}$~and~$\mathcal{C}_{1}$.

Suppose that $Y$ is a~$\mathcal{C}$\nobreakdash-group, $p \in \mathfrak{P}(\mathcal{C})$ (since $\mathcal{C}$ is root, then $\mathfrak{P}(\mathcal{C}) \ne \varnothing)$, $Z_{y}$ ($y \in Y$) is a~cyclic group of~order~$p$, and~$P$ is the~direct product of~the~groups~$Z_{y}$ ($y \in Y$). Then $Z_{y} \in \mathcal{C}$ in~accordance with~Proposition~\ref{pe401} and~$P \in \mathcal{C}$ by~the~definition of~a~root class. Hence, $P \in \mathcal{C}_{1}$ and,~moreover, the~cardinality of~$P$ is not less than the~cardinality of~$Y$. Therefore, $(5)_{\mathcal{C}_{1}} \Leftrightarrow (5)_{\mathcal{C}}$.

Thus, if $X$ is residually a~$\mathcal{C}$\nobreakdash-group, then it is residually a~$\mathcal{C}_{2}$\nobreakdash-group and~satisfies the~necessary condition (depending only on~$\mathfrak{P}(\mathcal{C})$ and~$(5)_{\mathcal{C}})$. If~the~sufficient condition holds (which also depends only on~$\mathfrak{P}(\mathcal{C})$ and~$(5)_{\mathcal{C}}$), then $X$ is residually a~$\mathcal{C}_{1}$\nobreakdash-group and~therefore is residually a~$\mathcal{C}$\nobreakdash-group.
\end{proof}

Proposition~\ref{pe807} allows us to~assume that, in~Theorems~\ref{te302}--\ref{te306}, the~class~$\mathcal{C}$ is closed under taking quotient groups. Therefore, Theorems~\ref{te305},~\ref{te306}, and~also Theorem~\ref{te301}, follow from~Theorem~\ref{te204} and~Propositions~\ref{pe603},~\ref{pe805},~\ref{pe806}, and~\ref{pe801}.

\begin{proof}[\textup{\textbf{Proof of~Theorem~\ref{te302}}}]
By~Theorem~\ref{te204}, $A$ and~$B$ are residually $\mathcal{C}$\nobreakdash-groups and~$U$ is $\mathcal{C}$\nobreakdash-sepa\-ra\-ble in~these groups if and~only if it is $\mathfrak{P}(\mathcal{C})^{\prime}$\nobreakdash-iso\-lat\-ed in~them. Let us show that, for~any subgroups $L \in \mathcal{C}^{*}(A)$ and~$M \in \mathcal{C}^{*}(B)$, one can find subgroups $R \in \mathcal{C}^{*}(A)$ and~$S \in \mathcal{C}^{*}(B)$ such that $R \leqslant L$, $S \leqslant M$, and~$R \cap U = S \cap U$. Then the~fact that $A$ and~$B$ are residually $\mathcal{C}$\nobreakdash-groups will mean that the~condition~1 of~Proposition~\ref{pe802} is satisfied. In~combination with~Proposition~\ref{pe807}, this proves the~required statement.

So, let $L \in \mathcal{C}^{*}(A)$, and~let $M \in \mathcal{C}^{*}(B)$. Because $\mathcal{C}$ is closed under taking subgroups, it follows from~the~relations $U/L \cap U \cong UL/L \leqslant A/L \in \mathcal{C}$ that $L \cap U \in \mathcal{C}^{*}(U)$. Similarly, $M \cap U \in \mathcal{C}^{*}(U)$, and,~by~Proposition~\ref{pe406}, $L \cap M \cap U \in \mathcal{C}^{*}(U)$. Let $q$ be the~exponent of~the~group $U/(L \cap M \cap U)$, which is finite by~Proposition~\ref{pe402}, and~let $N = U^{q}$. Then $U/N \in \mathcal{C}\mbox{-}\mathcal{BN} \cap \mathcal{C}$ in~accordance with~Propositions~\ref{pe502} and~\ref{pe601}. Since $N$ is normal in~$A$ and~$B$, and~these groups are $\mathcal{C}$\nobreakdash-regular with~respect to~$U$ by~Proposition~\ref{pe603}, then there exist subgroups $V \in \mathcal{C}^{*}(A)$ and~$W \in \mathcal{C}^{*}(B)$ satisfying the~equalities $V \cap U = N = W \cap U$. If~$R = V \cap L$ and~$S = W \cap M$, then $R \leqslant L$, $S \leqslant M$,
$$
R \cap U = V \cap L \cap U = N \cap L = N = N \cap M = W \cap M \cap U = S \cap U,
$$
and,~by~Proposition~\ref{pe406}, $R \in \mathcal{C}^{*}(A)$ and~$S \in \mathcal{C}^{*}(B)$. Therefore, $R$ and~$S$ are the~required subgroups.
\end{proof}

\begin{proof}[\textup{\textbf{Proof of~Theorem~\ref{te304}}}]
If $H \cap K = 1$, then the~conditions~1 and~3 are satisfied automatically and~the~statement to~be proved is a~special case of~Theorem~\ref{te306}. Let $H \cap K \ne 1$. By~Theorem~\ref{te204}, $G$~is residually a~$\mathcal{C}$\nobreakdash-group and~each subgroup lying in~$HK$ is $\mathcal{C}$\nobreakdash-sepa\-ra\-ble in~$G$ if and~only if it is $\mathfrak{P}(\mathcal{C})^{\prime}$\nobreakdash-iso\-lat\-ed in~this group. The~inclusion $2 \in \mathfrak{P}(\mathcal{C})$ holds if and~only if $\mathcal{C}$ contains a~group of~order~$2$. As~above, the~class~$\mathcal{C}$ can be considered closed under taking quotient groups. Therefore, the~required statement follows from~Proposition~\ref{pe804}.
\end{proof}

To~prove Theorem~\ref{te303}, we need the~following two assertions in~addition to~Proposition~\ref{pe803}.

\begin{eproposition}\label{pe808}
\textup{\cite[Proposition~5.7]{Sokolov2022CA}}
Let $\mathcal{C}$ be a~root class of~groups consisting only of~periodic groups and~closed under taking quotient groups. Suppose that $Y$ is an~abelian group, $\varphi \in \operatorname{Aut}Y$, and~$X$ is the~split extension of~$Y$ by~an~infinite cyclic group~$Z$ such that the~conjugation by~the~generator of~$Z$ acts on~$Y$ as~$\varphi$. Suppose also that $\Omega$ is a~family of~subgroups of~$Y$ defined as~follows: $N \in \Omega$ if and~only if $N \in \mathcal{C}^{*}(Y)$, $N\varphi = N$, and~the~order of~the~automorphism~$\varphi_{N}$ of~$Y/N$ induced by~$\varphi$ is finite and~is a~$\mathfrak{P}(\mathcal{C})$\nobreakdash-num\-ber. Then $X$ is residually a~$\mathcal{C}$\nobreakdash-group if and~only if $\bigcap_{N \in \Omega} N = 1$.
\end{eproposition}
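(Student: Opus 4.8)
The plan is to use the standard criterion that $X$ is residually a $\mathcal{C}$-group if and only if $\bigcap\{M : M\trianglelefteq X,\ X/M\in\mathcal{C}\}=1$, and then to match the normal subgroups $M$ of this family with the members of $\Omega$ through the correspondence $M\mapsto M\cap Y$. Throughout I would write $X=Y\rtimes Z$ with $Z=\langle t\rangle$, express elements in the form $yt^{k}$, and use that conjugation by $t$ induces $\varphi$ on $Y$; thus a subgroup $N\leqslant Y$ is $\varphi$-invariant precisely when it is normal in~$X$.

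For the necessity I would fix any $M\trianglelefteq X$ with $\pi\colon X\to C=X/M\in\mathcal{C}$ and set $N=M\cap Y$. Then $N$ is normal in~$X$, hence $\varphi$-invariant, and $Y/N\cong Y\pi\leqslant C$ lies in~$\mathcal{C}$, so $N\in\mathcal{C}^{*}(Y)$. The only point needing care is the order of the induced automorphism $\varphi_{N}$ of~$Y/N$: under the isomorphism $Y/N\cong Y\pi$ it is realized as conjugation by $t\pi$ in~$C$. Since $C$ is periodic of finite exponent (Proposition~\ref{pe402}), the element $t\pi$ has finite order, which is necessarily a $\mathfrak{P}(\mathcal{C})$-number, and the order of $\varphi_{N}$ divides it. Hence $N\in\Omega$. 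As every such $N=M\cap Y$ lies in~$\Omega$, we obtain $\bigcap_{N\in\Omega}N\leqslant\bigcap_{M}(M\cap Y)=\big(\bigcap_{M}M\big)\cap Y$, which is trivial whenever $X$ is residually a $\mathcal{C}$-group.

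For the sufficiency, assume $\bigcap_{N\in\Omega}N=1$ and separate an arbitrary nontrivial $x=yt^{k}$ from~$1$. If $k\neq0$, I would map $X$ onto $X/Y\cong Z$ and then onto a cyclic group of order $p^{s}$, with $p\in\mathfrak{P}(\mathcal{C})$ chosen so that $p^{s}\nmid k$; such a group lies in~$\mathcal{C}$ by Proposition~\ref{pe401} and does not kill the image of~$x$. If $k=0$, then $x=y\in Y\setminus\{1\}$, and by hypothesis there is $N\in\Omega$ with $y\notin N$. Passing to $\overline{X}=X/N=(Y/N)\rtimes\langle\overline{t}\rangle$, where $\overline{t}$ acts as $\varphi_{N}$, an automorphism of finite order $d$ that is a $\mathfrak{P}(\mathcal{C})$-number, I would observe that $\overline{t}^{\,d}$ centralizes $Y/N$ and is therefore central in~$\overline{X}$. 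Since $\langle\overline{t}^{\,d}\rangle\cap(Y/N)=1$, the further quotient $\overline{X}/\langle\overline{t}^{\,d}\rangle$ is an extension of $Y/N\in\mathcal{C}$ by a cyclic group of order~$d$, and hence itself a $\mathcal{C}$-group because $\mathcal{C}$ is closed under extensions and $d$ is a $\mathfrak{P}(\mathcal{C})$-number (Proposition~\ref{pe401}). The image of $y$ survives this quotient, yielding the desired $\mathcal{C}$-group homomorphism.

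The main obstacle is the sufficiency step for elements of~$Y$: one must produce a single $\mathcal{C}$-quotient of~$X$ in which a prescribed $y$ survives, and the tension is that $t$ has infinite order while every $\mathcal{C}$-group is periodic. The definition of~$\Omega$ is engineered precisely to resolve this—the requirement that $\varphi_{N}$ have finite $\mathfrak{P}(\mathcal{C})$-order is exactly what makes a central power $\overline{t}^{\,d}$ available to factor out, collapsing the infinite cyclic direction without disturbing~$Y/N$. Showing that this condition is both forced (necessity) and sufficient is the heart of the argument; the separation of elements with nonzero $t$-exponent is routine.
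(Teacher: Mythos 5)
The paper does not prove this proposition at all: it is imported by citation from \cite[Proposition~5.7]{Sokolov2022CA}, so there is no in-paper argument to compare yours against. Your proof is correct and self-contained. The necessity direction is right: for $M \in \mathcal{C}^{*}(X)$ the subgroup $N = M \cap Y$ is $\varphi$-invariant, lies in $\mathcal{C}^{*}(Y)$ by closure under subgroups, and the induced automorphism $\varphi_{N}$ is realized as conjugation by the image of $t$ in the periodic $\mathcal{C}$-group $X/M$, so its order divides a finite $\mathfrak{P}(\mathcal{C})$-number; hence $\bigcap_{N\in\Omega}N \leqslant \bigl(\bigcap_{M}M\bigr)\cap Y = 1$. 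The sufficiency direction is also sound: elements with nonzero $t$-exponent are separated via the quotient $X/Y$ mapped onto a cyclic $p$-group with $p \in \mathfrak{P}(\mathcal{C})$ (which lies in $\mathcal{C}$ by Proposition~\ref{pe401}), while for $1 \neq y \in Y$ one picks $N \in \Omega$ avoiding $y$ and factors $X/N$ by the central subgroup $\langle\overline{t}^{\,d}\rangle$, where $d$ is the order of $\varphi_{N}$; the resulting group is an extension of $Y/N \in \mathcal{C}$ by a cyclic group of $\mathfrak{P}(\mathcal{C})$-order $d$ and hence a $\mathcal{C}$-group, and the image of $y$ survives because $\langle\overline{t}^{\,d}\rangle$ meets $Y/N$ trivially. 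This is exactly the expected mechanism, and nothing in the argument needs repair.
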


\begin{eproposition}\label{pe809}
\textup{\cite[Proposition~6.8]{Sokolov2022CA}}
Suppose that $\mathcal{C}$ is a~root class of~groups consisting only of~periodic groups and~closed under taking quotient groups, $G^{*}\kern-1.5pt{} =\kern-1pt{} \langle G\kern-1pt{}, t;\ t^{-1}Ht\kern-1pt{} =\nolinebreak\kern-1pt{} K\kern-1pt{},\ \varphi \rangle$\kern-1pt{}, $K_{0} = G$, $H_{1} = H$, $K_{1} = K$, and~$H_{i+1} = H_{i} \cap K_{i}$, $K_{i+1} = H_{i+1}\varphi$ for~all $i \geqslant 1$. Suppose also that $H$ and~$K$ are proper central subgroups of~$G$, and~there exists $m \geqslant 1$ such that $H_{m}$ and~$K_{m}$ are finitely generated. If~$G^{*}$ is residually a~$\mathcal{C}$\nobreakdash-group, then $H_{n} = H_{n+1}$ for~some~$n$.
\end{eproposition}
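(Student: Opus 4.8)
The plan is to argue by contraposition: assuming the descending sequence $H_1 \geqslant H_2 \geqslant \ldots$ never stabilises, I will exhibit a non-trivial element of $G^{*}$ that is annihilated by every homomorphism onto a $\mathcal{C}$-group, so that $G^{*}$ is not residually a $\mathcal{C}$-group. The natural starting point is the closed description $H_i = \bigcap_{j=0}^{i-1} t^{-j}Ht^{j}$, which follows by induction from $K_i = t^{-1}H_i t$ and $H_{i+1} = H_i \cap K_i$; thus $H_n = H_{n+1}$ holds for some $n$ exactly when this intersection chain terminates. Since $H_m$ and $K_m$ are finitely generated and every later term lies in $H_m$, I may discard the initial segment and work inside the finitely generated abelian group $A = H_m K_m$. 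The ranks of the $H_i$ are non-increasing, hence eventually constant; once they are, each $H_i$ and $K_i$ is a full-rank lattice in one fixed $\mathbb{Q}$-subspace $V$ of $A \otimes \mathbb{Q}$, the isomorphism $\varphi$ induces an automorphism $\Phi \in \operatorname{GL}(V)$ with $K_i = \Phi H_i$, and $H_{i+1} = H_i \cap \Phi H_i$, so that $H_{m'+k} = \bigcap_{l=0}^{k}\Phi^{l}H_{m'}$ for the first stable index $m'$.

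In this linear model the sequence stabilises precisely when the subgroup $\bigcap_{l\geqslant 0}\Phi^{l}H_{m'}$ has finite index, which happens if and only if $\Phi$ normalises a full-rank sublattice of $H_{m'}$, i.e.~if and only if every eigenvalue of $\Phi$ is a $p$\nobreakdash-adic unit for every prime $p$. Hence non-stabilisation yields a \emph{defect prime} $p_0$ at which $\Phi$ has an eigenvalue of non-zero $p_0$\nobreakdash-adic valuation. Replacing $t$ by $t^{-1}$ if necessary---which interchanges $H$ and $K$ and replaces $\Phi$ by $\Phi^{-1}$---I may normalise the sign of this valuation. In any homomorphism $\rho$ of $G^{*}$ onto a $\mathcal{C}$-group, $\rho(h)$ and $\rho(\varphi(h)) = \rho(t)^{-1}\rho(h)\rho(t)$ are conjugate and hence of equal order, while by Proposition~\ref{pe402} that order is finite; the whole argument turns on pitting this order equality against the $p_0$\nobreakdash-adic behaviour of $\Phi$.

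The witness is then produced according to whether $p_0 \in \mathfrak{P}(\mathcal{C})$. If $p_0 \in \mathfrak{P}(\mathcal{C})$, I take an eigendirection of $\Phi$ along which $\varphi$ expands $p_0$\nobreakdash-adically: some non-trivial $h \in H_{m'}$ then satisfies a relation forcing the $p_0$\nobreakdash-part of the order of $\rho(h)$ to drop strictly under $\varphi$, so equality of the orders of $\rho(h)$ and $\rho(\varphi(h))$ is impossible unless $\rho(h)=1$. Thus $h$ itself is non-trivial in $G^{*}$ yet lies in every $\mathcal{C}$-kernel. If $p_0 \notin \mathfrak{P}(\mathcal{C})$, then $p_0$ is invertible in every $\mathcal{C}$-group, and the $p_0$\nobreakdash-adic defect of $\Phi$ forces $G$ to contain an element $a$ with $a \notin H \cup K$ but with a $p_0$\nobreakdash-power of $a$ lying in $H \cap K$. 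Here I use the Baumslag--Solitar-type commutator $w = [\,tat^{-1},\,a\,]$: centrality of $H$ and $K$ collapses the relevant Britton pinches, so $w \neq 1$ in $G^{*}$ by Britton's lemma~\cite{LyndonSchupp1977}, whereas in any $\mathcal{C}$-group $\operatorname{ord}\rho(a)$ is prime to $p_0$, so $\rho(t)\rho(a)\rho(t)^{-1}$ equals a power of its (central) $p_0$\nobreakdash-power and therefore centralises $\rho(a)$, sending $w$ to $1$.

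The main obstacle is the passage to a faithful linear model. One must verify that the partial isomorphism $\varphi$, defined only on $H$ and possibly on an infinitely generated $H$, genuinely induces an \emph{automorphism} $\Phi$ of a fixed rational space once the ranks have stabilised, and that the defect prime together with the accompanying element or commutator can be selected uniformly. The delicate point throughout is to keep the witness simultaneously Britton-reduced in $G^{*}$ and annihilated in every $\mathcal{C}$-quotient: centrality of $H$ and $K$ is what both guarantees the pinches underlying non-triviality and supplies the commuting used in the order computation, and this must be balanced against the finite-exponent property of $\mathcal{C}$-groups supplied by Proposition~\ref{pe402}.
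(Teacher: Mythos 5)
The paper does not prove this proposition at all: it is quoted verbatim from \cite[Proposition~6.8]{Sokolov2022CA}, so there is no internal proof to compare yours against. Judged on its own merits, your argument contains genuine gaps.

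First, your stabilisation criterion is incorrect. The chain $L_{k}=\bigcap_{j=0}^{k}\Phi^{j}L_{0}$ can stabilise even when $\Phi$ is not unimodular: for $G=\mathbb{Z}$, $H=4\mathbb{Z}$, $K=2\mathbb{Z}$, $\varphi(4)=2$ (so $\Phi=1/2$) one gets $H_{1}=H_{2}=4\mathbb{Z}$. The correct condition is one-sided (non-stabilisation is equivalent to some eigenvalue having strictly \emph{positive} valuation somewhere), and consequently the step ``replacing $t$ by $t^{-1}$ \dots I may normalise the sign of this valuation'' is not a without-loss move: inverting $t$ replaces the chain $H_{i}$ by a different chain whose stabilisation behaviour can differ (in the example just given the inverted chain is $2\mathbb{Z}\supset 4\mathbb{Z}\supset 8\mathbb{Z}\supset\cdots$, which never stabilises), while the proposition is a statement about the specific chain $H_{i}$.

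Second, and more seriously, your case $p_{0}\in\mathfrak{P}(\mathcal{C})$ does not produce a witness. Take $\mathcal{C}$ to be the class of all finite groups and $G^{*}=\mathit{BS}(2,3)$, i.e.\ $G=\langle a\rangle\cong\mathbb{Z}$, $H=\langle a^{2}\rangle$, $K=\langle a^{3}\rangle$: here $H_{i}=2\cdot 3^{\,i-1}\mathbb{Z}$ never stabilises and the defect prime $3$ lies in $\mathfrak{P}(\mathcal{C})$. Yet no non-trivial element of $H$ dies in every finite quotient: the conjugacy of $\rho(a^{2})$ and $\rho(a^{3})$ only forces the order of $\rho(a)$ to be coprime to $6$, after which the orders of $\rho(h)$ and $\rho(h\varphi)$ agree for every $h\in H$ without forcing $\rho(h)=1$, and $a$ genuinely survives in finite quotients such as $\mathbb{Z}/5\rtimes\mathbb{Z}/2$ with $t$ acting by $x\mapsto x^{4}$. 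The element that is killed in every $\mathcal{C}$-quotient is a commutator such as $[tat^{-1},a]$ --- precisely the witness you reserve for the case $p_{0}\notin\mathfrak{P}(\mathcal{C})$. In that second case you in turn assert, without justification, that the defect forces $G$ to contain an element $a\notin H\cup K$ with a $p_{0}$-power in $H\cap K$; nothing in the hypotheses (the defect lives entirely inside $HK$) guarantees such an element. So neither branch of your case distinction is established, and the proof cannot be accepted in its present form.
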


\begin{proof}[\textup{\textbf{Proof of~Theorem~\ref{te303}}}]
It follows from~Proposition~\ref{pe807} that $\mathcal{C}$ can be considered closed under taking quotient groups. Let us show that the~conditions of~Proposition~\ref{pe803} hold.{\parfillskip=0pt\par}

By~Proposition~\ref{pe502}, $H_{i}, K_{i} \in \mathcal{C}\mbox{-}\mathcal{BA}$ for~all $i \geqslant 1$. Besides, $\sigma$ acts injectively on~$H_{1}K_{1}$. Therefore, it follows from~Proposition~\ref{pe603} and~Theorem~\ref{te204} that, for~any $i\kern-1pt{} \geqslant\kern-1pt{} 0$, the~group~$K_{i}$ is $\mathcal{C}$\nobreakdash-regular with~respect to~$H_{i+1}K_{i+1}$ and~the~$\mathfrak{P}(\mathcal{C})^{\prime}$\nobreakdash-iso\-la\-tor $\mathfrak{P}(\mathcal{C})^{\prime}\mbox{-}\mathfrak{Is}(K_{i}, H_{i+1}K_{i+1})$ is $\mathcal{C}$\nobreakdash-sepa\-ra\-ble in~$K_{i}$. The~fact that $G$ is residually a~$\mathcal{C}$\nobreakdash-group follows from~Theorem~\ref{te204} if $G$ is residually a~$\mathcal{C}\mbox{-}\mathcal{BN}_{\mathfrak{P}(\mathcal{C})}$\nobreakdash-group, and~from~the~assumption that $\mathcal{C}$ is closed under taking subgroups if $G^{*}$ is residually a~$\mathcal{C}$\nobreakdash-group.

If $H_{n} = K_{n}$ for~some $n > m$, then the~restriction of~$\varphi$ on~$H_{n}$ is an~automorphism of~the~latter and~therefore $E = \operatorname{sgp}\{H_{n}, t\}$ is a~split extension of~$H_{n}$ by~the~infinite cyclic subgroup~$\langle t \rangle$. If~$N \in \mathcal{C}^{*}(H_{n})$, then the~periodic $\mathcal{C}$\nobreakdash-group~$H_{n}/N$ is a~finite $\mathfrak{P}(\mathcal{C})$\nobreakdash-group because $H_{m}$ and~$K_{m}$ are finitely generated. Conversely, if $H_{n}/N$ is a~finite $\mathfrak{P}(\mathcal{C})$\nobreakdash-group, then $H_{n}/N \in \mathcal{C}$ in~accordance with~Proposition~\ref{pe401}. Therefore, by~Proposition~\ref{pe808}, the~condition~3 of~Theorem~\ref{te303} holds if and~only if $E = \operatorname{sgp}\{H_{n}, t\}$ is residually a~$\mathcal{C}$\nobreakdash-group.

Finally, we note that $H_{n} = H_{n+1}$ for~some $n > m$. This is obvious if $H_{n} = K_{n}$, and~follows from~Proposition~\ref{pe809} if $G^{*}$ is residually a~$\mathcal{C}$\nobreakdash-group (this proposition does not state that $n > m$, but it is clear that if $H_{n} = K_{n}$ for~some $n \geqslant 1$, then $H_{l} = K_{l}$ for~all~$l \geqslant n$). Thus, Theorem~\ref{te303} follows from~Proposition~\ref{pe803}.
\end{proof}

\section{The example}\label{se09}

Suppose that $\mathcal{C}$ is a~root class of~groups consisting only of~periodic groups. Let us show that if the~cardinalities of~all $\mathcal{C}$\nobreakdash-groups are less than the~cardinality of~some set~$\mathcal{I}$ (and~hence there are weakly $\mathcal{C}$\nobreakdash-bound\-ed abelian groups that are not $\mathcal{C}$\nobreakdash-bound\-ed), then there exists a~$\mathfrak{P}(\mathcal{C})^{\prime}$\nobreakdash-tor\-sion-free weakly $\mathcal{C}$\nobreakdash-bound\-ed nilpotent group, which is not residually a~$\mathcal{C}$\nobreakdash-group and~hence does not have the~property~$\mathcal{C}\mbox{-}\mathfrak{Sep}$.

\medskip

Since $\mathcal{C}$ contains non-trivial groups, then $\mathfrak{P}(\mathcal{C}) \ne \varnothing$. Suppose that $p \in \mathfrak{P}(\mathcal{C})$, $Y$~is the~direct product of~two cyclic groups of~order~$p$ with~generators~$y$,~$z$, and~$\chi$ is the~automorphism of~$Y$ defined by~the~rule $y\chi = yz$, $z\chi = z$. It~is easy to~see that the~order of~$\chi$ is equal to~$p$ and~therefore we can define the~split extension~$X$ of~$Y$ by~the~cyclic group of~order~$p$ with~a~generator~$x$, in~which the~conjugation by~$x$ acts on~$Y$ as~$\chi$. Let, for~each~$i \in \mathcal{I}$,
$$
X_{i}^{\vphantom{p}} = \big\langle x_{i}^{\vphantom{p}}, y_{i}^{\vphantom{p}}, z_{i}^{\vphantom{p}};\ x_{i}^{p} = y_{i}^{p} = z_{i}^{p} = [x_{i}^{\vphantom{p}}, z_{i}^{\vphantom{p}}] = [y_{i}^{\vphantom{p}}, z_{i}^{\vphantom{p}}] = 1,\ [y_{i}^{\vphantom{p}}, x_{i}^{\vphantom{p}}] = z_{i}^{\vphantom{p}} \big\rangle
$$
be an~isomorphic copy of~$X$, and~let $\Gamma$ be the~star graph with~a~central vertex~$w$ and~the~set~$\mathcal{I}$ as~a~set of~leaves. We define a~graph of~groups~$\mathcal{G}(\Gamma)$ by~associating the~vertex~$w$ with~the~group $Z = \langle z;\ z^{p} = 1 \rangle$, the~vertex $i \in \mathcal{I}$ with~the~group~$X_{i}$, and~the~edge connecting~$w$ and~$i$ with~the~group~$Z$ and~the~homomorphisms acting according to~the~rules: $z \mapsto z$, $z \mapsto z_{i}$.

Let $D$ be the~generalized direct product associated with~$\mathcal{G}(\Gamma)$, i.e.,~the~quotient group of~the~direct product of~the~groups~$X_{i}$ ($i \in \mathcal{I}$) and~$Z$ by~the~normal closure of~the~set $\{z_{i}z^{-1} \mid i \in \mathcal{I}\}$. Since $\Gamma$ is a~tree, then $Z$ is embedded into~$D$ by~the~identity mapping of~the~generator~\cite[Theorem~1]{SokolovTumanova2019AL}. Therefore, the~relation $z \ne 1$ holds~in~$D$.

It is easy to~see that the~presentation of~$D$ can be reduced to~the~form
\begin{multline*}
\big\langle x_{i}^{\vphantom{p}}, y_{i}^{\vphantom{p}}, z\ (i \in \mathcal{I});\ x_{i}^{p} = y_{i}^{p} = z^{p} = 1,\ [x_{i}^{\vphantom{p}}, z] = [y_{i}^{\vphantom{p}}, z] = 1,\ [y_{i}^{\vphantom{p}}, x_{i}^{\vphantom{p}}] = z\ (i \in \mathcal{I}),\\
[x_{k}^{\vphantom{p}}, x_{l}^{\vphantom{p}}] = [y_{k}^{\vphantom{p}}, y_{l}^{\vphantom{p}}] = [x_{k}^{\vphantom{p}}, y_{l}^{\vphantom{p}}] = 1\ (k, l \in \mathcal{I},\ k \ne l) \big\rangle,
\end{multline*}
which implies that the~sequence $1 \leqslant Z \leqslant D$ is a~central series of~$D$ with~factors of~exponent~$p$ (here $Z$ is still the~subgroup generated by~$z$). It~is clear that $D$ has no~$\mathfrak{P}(\mathcal{C})^{\prime}$\nobreakdash-tor\-sion and~the~factors of~the~specified series are weakly $\mathcal{C}$\nobreakdash-bound\-ed, i.e.,~$D \in \mathcal{C}\mbox{-}w\mathcal{BN}$. Let us show, however, that $D$ is not residually a~$\mathcal{C}$\nobreakdash-group and~therefore it is the~required group.

Indeed, let $\sigma$ be a~homomorphism of~$D$ onto~some $\mathcal{C}$\nobreakdash-group~$T$. Then the~cardinality of~$\mathcal{I}$ exceeds the~cardinality of~$T$ and~hence there exist $k, l \in \mathcal{I}$ such that $k \ne l$ and~$x_{k}\sigma = x_{l}\sigma$. It~follows that $z\sigma = [y_{k}\sigma, x_{k}\sigma] = [y_{k}\sigma, x_{l}\sigma] = 1$, and~since $\sigma$ is chosen arbitrarily, $D$ is not residually a~$\mathcal{C}$\nobreakdash-group.


\begin{thebibliography}{99}


\bibitem{Azarov2013MZ}
\textit{Azarov\hspace{.5ex}D.\hspace{.5ex}N.}\hspace{.8ex}On~the~residual finiteness of~free products of~solvable minimax groups with~cyclic amalgamated subgroups, \href{https://doi.org/10.1134/S0001434613030188}{Math. Notes 93 (4) (2013) 503\nobreakdash--509.}

\bibitem{Azarov2013SMJ1}
\textit{Azarov\hspace{.5ex}D.\hspace{.5ex}N.}\hspace{.8ex}On~the~residual finiteness of~generalized free products of~finite rank groups, \href{https://doi.org/10.1134/S0037446613030014}{Sib. Math.~J. 54 (3) (2013) 379\nobreakdash--387.}

\bibitem{Azarov2016SMJ}
\textit{Azarov\hspace{.5ex}D.\hspace{.5ex}N.}\hspace{.8ex}A~criterion for~the~$\mathcal{F}_{\pi}$\nobreakdash-residuality of~free products with~amalgamated cyclic subgroup of~nilpotent groups of~finite ranks, \href{https://doi.org/10.1134/S0037446616030010}{Sib. Math.~J. 57 (3) (2016) 377\nobreakdash--384.}

\bibitem{Bardakov2004SMJ}
\textit{Bardakov\hspace{.5ex}V.\hspace{.5ex}G.}\hspace{.8ex}On~D.~I.~Moldavanskii's question about $p$\nobreakdash-separable subgroups of~a~free group, \href{https://doi.org/10.1023/B:SIMJ.0000028606.51473.f7}{Sib. Math.~J. 45 (3) (2004) 416\nobreakdash--419.}

\bibitem{Bardakov2006AC}
\textit{Bardakov\hspace{.5ex}V.\hspace{.5ex}G.}\hspace{.8ex}On~$p$\nobreakdash-separability of~subgroups of~free metabelian groups, \href{https://doi.org/10.1142/S1005386706000253}{Algebra Colloq. 13 (2) (2006) 289\nobreakdash--294.}

\bibitem{Berlai2016CA}
\textit{Berlai\hspace{.5ex}F.}\hspace{.8ex}Residual properties of~free products, \href{http://dx.doi.org/10.1080/00927872.2015.1065858}{Comm. Algebra 44 (7) (2016) 2959\nobreakdash--2980.}

\bibitem{BerlaiFerov2019JA}
\textit{Berlai\hspace{.5ex}F.,\hspace{.8ex}Ferov\hspace{.5ex}M.}\hspace{.8ex}Separating cyclic subgroups in~graph products of~groups, \href{https://doi.org/10.1016/j.jalgebra.2019.05.001}{J.~Algebra
531 (2019) 19\nobreakdash--56.}

\bibitem{BobrovskiiSokolov2010AC}
\textit{Bobrovskii\hspace{.5ex}P.\hspace{.5ex}A.,\hspace{.8ex}Sokolov\hspace{.5ex}E.\hspace{.5ex}V.}\hspace{.8ex}The~cyclic subgroup separability of~certain generalized free products of~two groups, \href{https://doi.org/10.1142/S1005386710000556}{Algebra Colloq. 17 (4) (2010) 577\nobreakdash--582.}

\bibitem{ClementMajewiczZyman2017}
\textit{Clement\hspace{.5ex}A.\hspace{.5ex}E.,\hspace{.8ex}Majewicz\hspace{.5ex}S.,\hspace{.8ex}Zyman\hspace{.5ex}M.}\hspace{.8ex}The~theory of~nilpotent groups (\href{https://doi.org/10.1007/978-3-319-66213-8}{Birkh\"auser, Cham, 2017}).

\bibitem{Gruenberg1957PLMS}
\textit{Gruenberg\hspace{.5ex}K.\hspace{.5ex}W.}\hspace{.8ex}Residual properties of~infinite soluble groups, \href{http://dx.doi.org/10.1112/plms/s3-7.1.29}{Proc. London Math. Soc. s3\nobreakdash-7 (1) (1957) 29\nobreakdash--62.}

\bibitem{Hall1950PAMS}
\textit{Hall\hspace{.5ex}M.}\hspace{.8ex}A~basis for~free Lie rings and~higher commutators in~free groups, \href{http://dx.doi.org/10.2307/2032282}{Proc. Amer. Math. Soc. 1 (5) (1950) 575\nobreakdash--581.}

\bibitem{KarrasSolitar1970TAMS}
\textit{Karrass\hspace{.5ex}A.,\hspace{.8ex}Solitar\hspace{.5ex}D.}\hspace{.8ex}The~subgroups of~a~free product of~two groups with~an~amalgamated subgroup, \href{https://doi.org/10.1090/S0002-9947-1970-0260879-9}{Trans. Amer. Math. Soc. 150 (1) (1970), 227\nobreakdash--255.}

\bibitem{Logan2018CA}
\textit{Logan\hspace{.5ex}A.\hspace{.5ex}D.}\hspace{.8ex}The~residual finiteness of~(hyperbolic) automorphism-induced
HNN-ex\-ten\-sions, \href{https://doi.org/10.1080/00927872.2018.1468904}{Comm. Algebra 46 (12) (2018) 5399\nobreakdash--5402.}

\bibitem{Loginova1999SMJ}
\textit{Loginova\hspace{.5ex}E.\hspace{.5ex}D.}\hspace{.8ex}Residual finiteness of~the~free product of~two groups with~commuting subgroups, \href{https://doi.org/10.1007/s11202-999-0013-8}{Sib. Math.~J. 40 (2) (1999) 341\nobreakdash--350.}

\bibitem{LyndonSchupp1977}
\textit{Lyndon\hspace{.5ex}R.\hspace{.5ex}C.,\hspace{.8ex}Schupp\hspace{.5ex}P.\hspace{.5ex}E.}\hspace{.8ex}Combinatorial group theory (\href{https://doi.org/10.1007/978-3-642-61896-3}{Springer-Verlag, Berlin, Heidelberg, New York, 1977}).

\bibitem{Magnus1935MA}
\textit{Magnus\hspace{.5ex}W.}\hspace{.8ex}Beziehungen zwischen Gruppen und~Idealen in~einem speziellen Ring, \href{http://dx.doi.org/10.1007/BF01472217}{Math. Ann. 111 (1935) 259\nobreakdash--280.}

\bibitem{Malcev1958PIvPI}
\textit{Mal'cev\hspace{.5ex}A.\hspace{.5ex}I.}\hspace{.8ex}On~homomorphisms onto~finite groups, Ivanov. Gos. Ped. Inst. Ucen. Zap. 18 (1958) 49\nobreakdash--60 (in~Russian). See also: \textit{Mal'cev\hspace{.5ex}A.\hspace{.5ex}I.}\hspace{.8ex}On~homomorphisms onto~finite groups, Transl. Am. Math. Soc. 2 (119) (1983) 67\nobreakdash--79.

\bibitem{Moldavanskii2017BIvSU}
\textit{Moldavanskii\hspace{.5ex}D.\hspace{.5ex}I.}\hspace{.8ex}On~the~separability of~cyclic subgroups in~the~direct product of~groups, Vestn. Ivanov. Gos. Univ. 18 (2017) 83\nobreakdash--93 (in~Russian).

\bibitem{Robinson1996}
\textit{Robinson\hspace{.5ex}D.\hspace{.5ex}J.\hspace{.5ex}S.}\hspace{.8ex}A~course in~the~theory of~groups (\href{https://doi.org/10.1007/978-1-4419-8594-1}{Springer, New York, NY, 1996}).

\bibitem{Serre1980}
\textit{Serre\hspace{.5ex}J.\nobreakdash-P.}\hspace{.8ex}Trees (\href{https://doi.org/10.1007/978-3-642-61856-7}{Springer-Verlag, Berlin, Heidelberg, New York, 1980}).

\bibitem{Sokolov2014IJAC}
\textit{Sokolov\hspace{.5ex}E.\hspace{.5ex}V.}\hspace{.8ex}On~the~cyclic subgroup separability of~the~free product of~two groups with~commuting subgroups, \href{https://doi.org/10.1142/S0218196714500313}{Int. J. Algebra Comput. 24 (5) (2014) 741\nobreakdash--756.}

\bibitem{Sokolov2015CA}
\textit{Sokolov\hspace{.5ex}E.\hspace{.5ex}V.}\hspace{.8ex}A~characterization of~root classes of~groups, \href{http://dx.doi.org/10.1080/00927872.2013.851207}{Comm. Algebra 43 (2) (2015) 856\nobreakdash--860.}

\bibitem{Sokolov2017SMJ}
\textit{Sokolov\hspace{.5ex}E.\hspace{.5ex}V.}\hspace{.8ex}Separability of~the~subgroups of~residually nilpotent groups in~the~class of~finite $\pi$\nobreakdash-groups, \href{http://doi.org/10.1134/S0037446617010219}{Sib. Math.~J. 58 (1) (2017) 169\nobreakdash--175.}

\bibitem{Sokolov2021SMJ2}
\textit{Sokolov\hspace{.5ex}E.\hspace{.5ex}V.}\hspace{.8ex}On~the~root-class residuality of~the~fundamental groups of~certain graph of~groups with~central edge subgroups, \href{http://dx.doi.org/10.1134/S0037446621060136}{Sib. Math.~J. 62 (6) (2021) 1119\nobreakdash--1132.}

\bibitem{Sokolov2022CA}
\textit{Sokolov\hspace{.5ex}E.\hspace{.5ex}V.}\hspace{.8ex}Certain residual properties of~HNN-ex\-ten\-sions with~central associated subgroups, \href{https://doi.org/10.1080/00927872.2021.1976791}{Comm. Algebra DOI:~10.1080/00927872.2021.1976791.}

\bibitem{SokolovTumanova2016SMJ}
\textit{Sokolov\hspace{.5ex}E.\hspace{.5ex}V.,\hspace{.8ex}Tumanova\hspace{.5ex}E.\hspace{.5ex}A.}\hspace{.8ex}Sufficient conditions for~the~root-class residuality of~certain generalized free products, \href{http://doi.org/10.1134/S0037446616010134}{Sib. Math.~J. 57 (1) (2016) 135\nobreakdash--144.}

\bibitem{SokolovTumanova2017MZ}
\textit{Sokolov\hspace{.5ex}E.\hspace{.5ex}V.,\hspace{.8ex}Tumanova\hspace{.5ex}E.\hspace{.5ex}A.}\hspace{.8ex}Root class residuality of~HNN-ex\-ten\-sions with~central cyclic associated subgroups, \href{http://doi.org/10.1134/S0001434617090280}{Math. Notes 102 (4) (2017) 556\nobreakdash--568.}

\bibitem{SokolovTumanova2019AL}
\textit{Sokolov\hspace{.5ex}E.\hspace{.5ex}V.,\hspace{.8ex}Tumanova\hspace{.5ex}E.\hspace{.5ex}A.}\hspace{.8ex}Generalized direct products of~groups and~their application to the~study of~residuality of~free constructions of~groups, \href{http://dx.doi.org/10.1007/s10469-020-09568-x}{Algebra Logic 58 (6) (2020) 480\nobreakdash--493.}

\bibitem{SokolovTumanova2020IVM}
\textit{Sokolov\hspace{.5ex}E.\hspace{.5ex}V.,\hspace{.8ex}Tumanova\hspace{.5ex}E.\hspace{.5ex}A.}\hspace{.8ex}On~the~root-class residuality of~certain free products of~groups with~normal amalgamated subgroups, \href{http://dx.doi.org/10.3103/S1066369X20030044}{Russ. Math. 64 (2020) 43\nobreakdash--56.}

\bibitem{Tumanova2017SMJ}
\textit{Tumanova\hspace{.5ex}E.\hspace{.5ex}A.}\hspace{.8ex}The~root class residuality of~Baumslag--Solitar groups, \href{http://dx.doi.org/10.1134/S003744661703017X}{Sib. Math.~J. 58 (3) (2017) 546\nobreakdash--552.}

\bibitem{Tumanova2019SMJ}
\textit{Tumanova\hspace{.5ex}E.\hspace{.5ex}A.}\hspace{.8ex}The~root class residuality of~the~tree product of~groups with~amalgamated retracts, \href{http://dx.doi.org/10.1134/S0037446619040153}{Sib. Math.~J. 60 (4) (2019) 699\nobreakdash--708.}


\end{thebibliography}
\end{document}